\documentclass[reqno,a4paper,oneside]{amsart}

\usepackage{amssymb,amsmath,amsthm}
\usepackage{mathtools}
\usepackage{stmaryrd}
\usepackage{enumerate}
\usepackage[all,cmtip,2cell]{xy} 
\usepackage{fouridx}
\usepackage{bbm}
\usepackage{scalerel}

\usepackage[utf8]{inputenc}
\usepackage{geometry}
\usepackage{verbatim}
\usepackage{enumitem}
\usepackage{xparse}
\usepackage{xspace}
\usepackage[draft=false]{hyperref}
\usepackage{cleveref}
\usepackage{xcolor}


\def\defthm#1#2#3#4{
  \newtheorem{#1}[theorem]{#3}
  \newtheorem*{#1*}{#3}
  \newtheorem{#2}[theorem]{#4}
  \newtheorem*{#2*}{#4}
  \crefname{#1}{#3}{#4}
  \crefname{#2}{#4}{#4}  
}

\makeatletter
\def\lam#1{{\lambda}\@lamarg#1:\@endlamarg\@ifnextchar\bgroup{.\,\lam}{.\,}}
\def\@lamarg#1:#2\@endlamarg{\if\relax\detokenize{#2}\relax #1\else\@lamvar{\@lameatcolon#2},#1\@endlamvar\fi}
\def\@lamvar#1,#2\@endlamvar{(#2\,{:}\,#1)}
\def\@lameatcolon#1:{#1}

\def\lamu#1{{\lambda}\@lamuarg#1:\@endlamuarg\@ifnextchar\bgroup{.\,\lamu}{.\,}}
\def\@lamuarg#1:#2\@endlamuarg{#1}
\makeatother


\NewDocumentCommand\fancypullbackcore{m m}{
  \POS{
    "c00";"c01":"c10"::
    "c00"+/v(1,0){#1}/="a",
    "c00"+/v(0,1){#1}/="b",
    "c00";"c11":
    "a";p+"c11"-"c01";x="sa",
    "b";p+"c11"-"c10";x="sb",
    "sa"+"sb"-"c00"="t",
    "c00";"c01":"c10"::
    "t";p+"c01"-"c11";x="x",
    "t";p+"c10"-"c11";y="y",
    "t";"x":"y"::
    (0,0);({#2},0)**\dir{-},
    (0,0);(0,{#2})**\dir{-}}}

%
\NewDocumentCommand\fancypullback{m m g o o}{
  \save{
    p="c00",
    {#1}="c01",
    {#2}="c10"}
  \IfValueTF{#3}
    {\POS{{#3}="c11"}}
    {\POS{"c01"+"c10"-"c00"="c11"}}
  \fancypullbackcore
    {\IfValueTF{#4}{#4}{0.2cm}}
    {\IfValueTF{#5}{#5}{0.6}}
  \restore}

%
\NewDocumentCommand\fancypullbackhops{m m g o o}{
  \IfValueTF{#3}
    {\pullbackcorewrapper{[#1]}{[#2]}{[#3]}[#4][#5]}
    {\pullbackcorewrapper{[#1]}{[#2]}[#4][#5]}}

\newdir{_|_}{^\dir{|-}}


\newdir{ >}{{}*!/-7pt/@{>}}
\newdir{m}{->}
\newcommand{\xycenter}[1]{\begin{gathered}\xymatrix{#1}\end{gathered}}

\makeatletter
\newcommand{\DeclareAbbrevation}[2]{\newcommand{#1}{\@ifnextchar{.}{#2}{#2.\@\xspace}}}
\makeatother

\DeclareAbbrevation{\ie}{i.e}
\DeclareAbbrevation{\eg}{e.g}
\DeclareAbbrevation{\cf}{cf}
\DeclareAbbrevation{\etc}{etc}
\DeclareAbbrevation{\resp}{resp}
\DeclareAbbrevation{\etal}{et al}
\DeclareAbbrevation{\ibid}{ibid}

\newcommand{\defeq}{=_{\operatorname{def}}}
\newcommand{\co}{\colon}

\newcommand{\op}{{\operatorname{op}}}

\newcommand{\cal}[1]{\mathcal{#1}}

\newcommand{\N}{\mathbb{N}}

\newcommand{\Ex}{\operatorname{Ex}}


\UseAllTwocells

\DeclarePairedDelimiter\bracks\lbrack\rbrack

\DeclarePairedDelimiter\angles\langle\rangle
\DeclarePairedDelimiter\braces\lbrace\rbrace
\DeclarePairedDelimiterX\set[2]\lbrace\rbrace{#1 \mathrel{\delimsize\vert} #2}

\newcommand{\cc}{\mathbin{\circ}}
\newcommand{\id}{\operatorname{id}}

\newcommand{\cod}{\operatorname{cod}}
\newcommand{\dom}{\operatorname{dom}}
\newcommand{\colim}{\operatorname{colim}}
\newcommand{\Id}{\operatorname{Id}}

\newcommand{\hatcirc}{\mathbin{\widehat{\circ}}}


\newcommand{\liftl}[1]{{\fourIdx{\pitchfork}{}{}{}{\smash{#1}\vphantom{I}}}}
\newcommand{\liftr}[1]{{\fourIdx{}{}{\pitchfork}{}{\smash{#1}\vphantom{I}}}}

\newsavebox{\mybox}
\newcommand{\scaledreflect}[1]{%
  \ThisStyle{\ifmmode%
    \savebox{\mybox}{$\SavedStyle#1$}%
    \reflectbox{\usebox{\mybox}}%
  \else%
    \savebox{\mybox}{#1}%
    \reflectbox{\usebox{\mybox}}%
  \fi%
}}

\newcommand{\obackslash}{\mathbin{\scaledreflect{\oslash}}}

\newcommand{\hatotimes}{\mathbin{\widehat{\otimes}}}
\newcommand{\hatobackslash}{\mathbin{\widehat{\obackslash}}}

\hyphenation{e-pi-mor-phism}
\hyphenation{e-pi-mor-phisms}
\hyphenation{mo-no-mor-phism}
\hyphenation{mo-no-mor-phisms}

\makeatletter
\def\ignorespacesandallpars{%
  \@ifnextchar\par
    {\expandafter\ignorespacesandallpars\@gobble}%
    {}%
}
\makeatother


\newcommand{\note}[3][]{\def\auth{#1}\textcolor{#2}{{[\ifx\auth\empty\else\auth: \fi{#3}]}}}

\newcommand{\notec}{\note{red}}




\newcommand{\triv}{\textup{triv}}

\newcommand{\Adj}{\operatorname{Adj}}

\newcommand{\fib}{\twoheadrightarrow}
\newcommand{\cof}{\rightarrowtail}
\newcommand{\trivfib}{\mathrel{\mathrlap{\hspace{0pt}\raisebox{5pt}{$\scriptscriptstyle\triv$}}\mathord{\twoheadrightarrow}}}
\newcommand{\trivcof}{\mathrel{\mathrlap{\hspace{0pt}\raisebox{5pt}{$\scriptscriptstyle\triv$}}\mathord{\rightarrowtail}}}
\newcommand{\we}{\mathrel{\mathrlap{\hspace{1pt}\raisebox{4pt}{$\scriptscriptstyle\sim$}}\mathord{\rightarrow}}}

\setlist[enumerate]{label=(\roman*)}

\newtheorem{theorem}{Theorem}[section]
\newtheorem*{theorem*}{Theorem}
\crefname{theorem}{Theorem}{Theorems}

\defthm{corollary}{corollaries}{Corollary}{Corollaries}
\defthm{lemma}{lemmata}{Lemma}{Lemmata}
\defthm{proposition}{propositions}{Proposition}{Propositions}
\defthm{exercise}{exercises}{Exercise}{Exercises}

\theoremstyle{definition}

\defthm{definition}{definitions}{Definition}{Definitions}

\defthm{remark}{remarks}{Remark}{Remarks}
\defthm{example}{examples}{Example}{Examples}
\defthm{question}{questions}{Question}{Questions}
\defthm{assumption}{assumptions}{Assumption}{Assumptions}

\crefname{section}{Section}{Sections}
\crefname{subsection}{Subsection}{Subsections}

\crefname{equation}{}{}
\numberwithin{equation}{section}

\title[The Equivalence Extension Property and Model Structures]{The Equivalence Extension Property \\ and Model Structures}

\author{Christian Sattler}
\email{sattler@chalmers.se}

\newcommand{\C}{\mathbf{C}}
\newcommand{\TC}{\mathbf{TC}}
\newcommand{\F}{\mathbf{F}}
\newcommand{\TF}{\mathbf{TF}}
\newcommand{\W}{\mathbf{W}}

\begin{document}

\begin{abstract}
We give an elementary construction of a certain class of model structures.
In particular, we rederive the Kan model structure on simplicial sets without the use of topological spaces, minimal complexes, or any concrete model of fibrant replacement such as Kan's $\Ex^\infty$ functor.
Our argument makes crucial use of the glueing construction developed by Cohen \etal~\cite{cohen-et-al:cubicaltt} in the specific setting of certain cubical sets.
\end{abstract}

\maketitle

\tableofcontents

\section{Introduction}

The goals of this paper are twofold.

First, we continue the programme of~\cite{gambino-sattler:frobenius}, which gives a categorical analysis of the Frobenius condition, by giving a categorical analysis of the glueing construction of~\cite{cohen-et-al:cubicaltt}.
This construction was originally developed to facilitate a proof of fibrancy and univalence of universes in a cubical set model of homotopy type theory.
It allows one to extend an equivalence between fibrations along a cofibration, given an extension of one of the fibrations (extension meaning forming a cartesian square).
In order to avoid the overloaded term ``glueing'', we call this the equivalence extension property (the term was suggested by Steve Awodey).
The correspondence to univalence in the setting of certain model structures is detailed in~\cite{voevodsky-simplicial-model,cisinski-univalence}.

Second, culminating in \cref{main-theorem}, we show how the construction can also be used for the construction of certain Quillen model structures based on functorial cylinders.
As our main application in \cref{kan-model-structure}, we obtain an elementary proof of the Kan model structure on simplicial sets~\cite{quillen-homotopical} that does not make use of topological spaces or minimal complexes, in contrast to the proofs in \cite{methods-of-homological-algebra,joyal-tierney-notes,joyal-tierney:simplicial-homotopy-theory}, and additionally does not depend on combinatorics of a specific model of fibrant replacement such as Kan's $\Ex^\infty$ functor, in contrast to the proof in~\cite{cisinski-asterisque}.

For the reader interested in constructive aspects, we note that our construction of the Kan model structure avoids the axiom of choice.
The non-constructivity is now neatly encapsulated in the property of the simplex category $\Delta$ as an elegant Reedy category that any monomorphism in simplicial sets can be written as an $\omega$-composition of cobase changes of coproducts of boundary inclusions of simplices.
This part critically requires the axiom of excluded middle to decide whether an element of a simplicial set is degenerate.

A sequel to this paper, continuing the programme of~\cite{gambino-sattler:frobenius} also for uniform notions of fibrations part of algebraic weak factorization systems, is currently in preparation.
It will provide an actual generalization of the results of~\cite{cohen-et-al:cubicaltt} to an abstract setting.
This will be used to construct certain algebraic model structures~\cite{riehl-algebraic-model} in the stronger sense of~\cite{swan:ams} using constructive methods, and as a corollary yield algebraic model structures on certain categories of cubical sets that are complete in the sense of Cisinski~\cite{cisinski-asterisque}.

\subsection*{Organization of the paper}

This paper is split into two parts.
In the first, consisting of only \cref{section:criterion}, we will develop the sufficient criteria \cref{model-structure,right-proper-model-structure} for when two weak factorization systems (wfs's) give rise to a (right proper) model structure, introducing what call the extension property.

The second part, and the paper proper, starts afterwards.
\cref{section:setting} introduces the setting we will be working in, consisting of a category $\cal{E}$, a functorial cyclinder, and a wfs $(\C, \TF)$ satisfying conditions we deem suitable.
\cref{section:preliminary} constructs the wfs $(\TC, \F)$ from that data, listing two more assumptions~\ref{boundary-fibration} and~\ref{local} it needs to satisfy, and develops several preliminary notions, all of them well-known from abstract homotopy theory.
In \cref{section:glueing}, we present the proof of the equivalence extension property, the central technical aspect of our development.
In \cref{section:composition}, we discuss an alternative way of characterizing fibrations in terms of lifts against squares instead of arrows.
This is used in \cref{section:extension} when applying the equivalence extension property to derive the extension property of fibrations along trivial cofibrations.

With this, in \cref{section:result} we can finally apply the criterion developed in the first part of the paper to construct the model structure in the form of \cref{main-theorem}; we finish by discussing important examples such as simplicial sets and certain categories of cubical sets.

\subsection*{Acknowledgements}

We thank Thierry Coquand for support of a visit of the author to Gothenburg in November~2015, which led to many interesting discussions.
Several key ideas underlying this paper were developed in a subsequent email exchange in December~2015 with Thierry Coquand and Andrew Swan.
We thank Nicola Gambino for discussion of ideas, proofreading of various draft documents, and comments on organization.
We thank Simon Huber and Andrew Swan for discussions on combinatorial aspects of different notions of fibrations in various variations of cubical sets.
We thank Jonas Frey for spotting a careless error in an earlier write-up of the proof of \cref{glueing-core}.

This material is based on research sponsored by the Air Force Research Laboratory, under agreement number FA8655-13-1-3038.

\section{A criterion for model structures}
\label{section:criterion}

A \emph{model structure} on a category $\cal{E}$ consists of three classes of maps $\C, \W, \F$ such that $(\C, \F \cap \W)$ and $(\C \cap \W, \F)$ form weak factorization systems (wfs's) and $\W$ satisfies 2-out-of-3.
Compared to \cite[Definition~1.1.3]{hovey:model-categories}, we do not ask the wfs's to come with functorial factorizations; note that the classes $\C$ and $\F$ are vacuously closed under retracts since they are respectively part of a wfs, and the same holds for $\W$ as proved in \cite[Proposition~E.1.3]{joyal-quaderns}.

Let $\cal{E}$ be a finitely complete and cocomplete category with two wfs's $(\C, \TF)$ and $(\TC, \F)$ such that $\TC \subseteq \C$ (equivalently, $\TF \subseteq \F$).
In this section, we will develop a simple sufficient criterion for this data to form a model structure (note that the class $\W$ is determined by $\C$ and $\F$).
This criterion is far from necessary, but it will be satisfied for the model structures modelling $\omega$-groupoids on simplicial sets and cubical sets over certain cube categories.

We call the maps in $\C$ ($\TC$) (trivial) cofibrations and draw them $A \cof B$ ($A \trivcof B$).
We call the maps in $\F$ ($\TF$) (trivial) fibrations and draw them $Y \fib X$ ($Y \trivfib X$).
We define a map to be a \emph{weak equivalence}, drawn $A \we B$, if it factors as a trivial cofibration followed by a trivial fibration.
The class of weak equivalences is denoted $\W$.
We have the following standard result.

\begin{lemma} \label{classes-match}
We have $\TF = \F \cap \W$ and $\TC = \C \cap \W$.
\end{lemma}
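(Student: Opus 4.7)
The argument is the classical retract argument, applied twice. I will spell out the proof for $\TF = \F \cap \W$; the equality $\TC = \C \cap \W$ is completely dual.

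The inclusion $\TF \subseteq \F \cap \W$ is immediate: we have $\TF \subseteq \F$ by hypothesis, and any $f \in \TF$ lies in $\W$ via the factorization $f = f \circ \id$, where $\id$ is a trivial cofibration (identities belong to every class of one side of a wfs, in particular to $\TC$) and $f$ itself is a trivial fibration.

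For the converse inclusion, let $f \co Y \to X$ be in $\F \cap \W$ and write $f = h \circ g$ with $g \co Y \trivcof Z$ in $\TC$ and $h \co Z \trivfib X$ in $\TF$, as guaranteed by $f \in \W$. Consider the square
\[
\xymatrix{
Y \ar[r]^-{\id} \ar[d]_{g} & Y \ar[d]^{f} \\
Z \ar[r]_-{h} & X
}
\]
which commutes since $h g = f$. Because $g \in \TC$, $f \in \F$, and $(\TC, \F)$ is a wfs, there is a diagonal filler $s \co Z \to Y$ satisfying $s g = \id_Y$ and $f s = h$. The first equation exhibits $f$ as a retract of $h$ in the arrow category via
\[
\xymatrix{
Y \ar[r]^-{g} \ar[d]_{f} & Z \ar[r]^-{s} \ar[d]_{h} & Y \ar[d]^{f} \\
X \ar[r]_-{\id} & X \ar[r]_-{\id} & X.
}
\]
Since $\TF$ is the right class of the wfs $(\C, \TF)$, it is closed under retracts, so $f \in \TF$. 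This yields $\F \cap \W \subseteq \TF$.

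The proof of $\TC = \C \cap \W$ is dual: $\TC \subseteq \C \cap \W$ is immediate, and if $f \in \C \cap \W$ factors as $g \in \TC$ followed by $h \in \TF$, lifting $f$ against $h$ (using $\C \liftl \TF$) exhibits $f$ as a retract of $g \in \TC$, and $\TC$ is closed under retracts as the left class of $(\TC, \F)$. The whole argument is routine; I do not expect any obstacle, since it relies only on the standard facts that identities belong to both classes of any wfs and that both classes of a wfs are closed under retracts.
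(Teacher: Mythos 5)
Your proof is correct and follows essentially the same retract argument as the paper's proof: both establish the easy inclusions using that identities lie in $\TC$ and $\TF$, and then for $\F \cap \W \subseteq \TF$ lift the trivial cofibration part of the factorization against the given fibration to exhibit it as a (domain) retract of the trivial fibration part. Your write-up is slightly more explicit (spelling out both equalities and the retract diagram), but the mathematical content is identical.
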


\begin{proof}
We have $\TC \subseteq \C$ and $\TF \subseteq \F$ by assumption.
Note that $\TC, \TF \subseteq \W$ since identities belong to $\TF$ and $\TC$, respectively.

The other directions follow from a standard retract argument.
For example, given a fibration $Y \trivfib X$ that factors as a trivial cofibration $Y \trivcof M$ followed by a trivial fibration $M \trivfib X$, we have a lifting diagram as follows:
\[
\xymatrix{
  Y
  \ar@{=}[r]
  \ar@{>->}[d]_{\sim}
&
  Y
  \ar@{->>}[d]
\\
  M
  \ar@{->>}[r]^{\sim}
  \ar@{.>}[ur]
&
  X
\rlap{.}}
\]
The lift exhibits $Y \to X$ as a domain retract of $M \to X$.
By closure of trivial fibrations under domain retracts, this makes $Y \to X$ into a trivial cofibration.
\end{proof}

We now list some conditions we are going to consider for our criterion.

\begin{definition}[Span property] \label{span-property}
We have the \emph{span property} if in any commuting triangle
\[
\xymatrix{
&
  A
  \ar@{>->}[dl]_{\triv}
  \ar@{>->}[dr]^{\triv}
\\
  Y
  \ar@{->>}[rr]
&&
  X
\rlap{,}}
\]
the map $Y \to X$ is a trivial fibration.
\end{definition}

\begin{definition}[Exchange] \label{exchange}
A class of maps $\mathbf{B}$ has \emph{exchange} along a class of maps $\mathbf{A}$ if for maps $X \to A$ in $\mathbf{B}$ and $A \to B$ in $\mathbf{A}$, there are maps $Y \to B$ in $\mathbf{B}$ and $X \to Y$ in $\mathbf{A}$ forming a commuting square as follows:
\[
\xymatrix{
  X
  \ar@{.>}[r]^{\in \mathbf{A}}
  \ar[d]_{\in \mathbf{B}}
&
  Y
  \ar@{.>}[d]^{\in \mathbf{B}}
\\
  A
  \ar[r]_{\in \mathbf{A}}
&
  B
\rlap{.}}
\]
It has \emph{cartesian exchange} if the above square is in addition a pullback.
\end{definition}

We will develop our criterion through as series of lemmata.
As a start, standard reasoning shows the following.

\begin{lemma} \label{factorization-invariance}
Assume that $\cal{E}$ has the span property and that trivial fibrations satisfy 2-out-of-3 relative to (\ie, in the subcategory of) fibrations.
Let
\[
\xymatrix{
&
  M_1
  \ar@{->>}[dr]
\\
  X
  \ar@{>->}[ur]^{\triv}
  \ar@{>->}[dr]_{\triv}
&&
  Y
\\&
  M_2
  \ar@{->>}[ur]
}
\]
be two factorizations of a map $X \to Y$ into a trivial cofibration followed by a fibration.
If $M_1 \to Y$ is a trivial fibration, then so is $M_2 \to Y$.
\end{lemma}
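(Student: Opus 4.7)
The plan is to compare the two factorizations via a diagonal lift and then combine the span property with the retract argument already used in the proof of \cref{classes-match}.

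Write $f \colon X \trivcof M_1$, $p \colon M_1 \trivfib Y$ for the first factorization and $g \colon X \trivcof M_2$, $q \colon M_2 \fib Y$ for the second. First I would lift the trivial cofibration $g$ against the trivial fibration $p$ in the square with top $f$ and bottom $q$, obtaining $h \colon M_2 \to M_1$ with $h g = f$ and $p h = q$. At this point $h$ is just a comparison map; in particular it is not known to be a fibration, so the span property cannot yet be applied to the triangle formed by $f$, $g$, $h$.

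Next I would factor $h$ as a trivial cofibration $i \colon M_2 \trivcof N$ followed by a fibration $j \colon N \fib M_1$. Then $ig \colon X \to N$ is a composite of two trivial cofibrations, hence itself a trivial cofibration, and the triangle with apex $X$, legs $ig$ and $f$, and base $j$ commutes since $j \cdot ig = h g = f$. Applying the span property to this triangle upgrades the fibration $j$ to a trivial fibration.

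Since trivial fibrations are closed under composition, $pj \colon N \trivfib Y$ is a trivial fibration. The identity $q = p h = (pj) \cdot i$ then exhibits the fibration $q$ as factoring as the trivial cofibration $i$ followed by the trivial fibration $pj$, and the retract argument from the proof of \cref{classes-match} concludes that $q$ itself is a trivial fibration. The only genuinely delicate point is the introduction of the auxiliary object $N$: it is needed because the direct comparison map $h$ need not be a fibration, and replacing it by its (trivial cofibration, fibration)\nobreakdash-factorization is exactly what allows the span property to be invoked.
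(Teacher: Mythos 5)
Your proof is correct, and it takes a genuinely different route from the paper's. The paper forms the pullback $P$ of $M_1 \to Y$ and $M_2 \to Y$, factors the induced map $X \to P$ as a trivial cofibration followed by a fibration $N \to P$, applies the span property twice (to $N \to M_1$ and $N \to M_2$), and then chains three applications of 2-out-of-3 for trivial fibrations among fibrations to push the hypothesis on $M_1 \to Y$ around to $M_2 \to Y$. You instead compare the two factorizations directly: the lift $h \colon M_2 \to M_1$ is produced by the wfs, its $(\TC,\F)$-factorization through $N$ sets up a single application of the span property (yielding $j \colon N \to M_1$ trivially fibrant), and then the retract argument from \cref{classes-match}, applied to the factorization $q = (pj)\cdot i$, finishes. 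Notably your argument never invokes the 2-out-of-3 hypothesis at all; only the span property and wfs closure properties are used. This is a real simplification and yields the slightly stronger statement in which the 2-out-of-3 assumption can be dropped. The paper's pullback-based argument, by contrast, produces the auxiliary objects $P$ and $N$ in a way that is arguably more symmetric in $M_1$ and $M_2$, but at the cost of the extra hypothesis; for the subsequent \cref{weak-equiv-vs-trivial-fib} and \cref{model-structure} the 2-out-of-3 hypothesis is needed anyway, so nothing is lost globally, but your version of this particular lemma is cleaner.
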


\begin{proof}
We introduce the pullback $P$ of $M_1 \to Y$ and $M_2 \to Y$.
Since fibrations are closed under pullback, we have that $P \to M_1$ and $P \to M_2$ are fibrations.
We then factor the induced map $X \to P$ into a trivial cofibration followed by a fibration:
\[
\xymatrix@!C{
&&&
  M_1
  \ar@{->>}[dr]
\\
  X
  \ar@{>->}@/^1em/[urrr]^{\triv}
  \ar@{>->}@/_1em/[drrr]_{\triv}
  \ar@{>->}[r]^(0.6){\triv}
&
  N
  \ar@{->>}[r]
&
  P
  \ar@{->>}[ur]
  \ar@{->>}[dr]
  \fancypullback{[ur]}{[dr]}{[rr]}
&&
  Y
  \rlap{.}
\\&&&
  M_2
  \ar@{->>}[ur]
}
\]
Since fibrations are stable under compositions, we have that $N \to M_1$ and $N \to M_2$ are fibrations.
By the span property, they are trivial fibrations.

Now let $M_1 \to Y$ be a trivial fibration.
By closure under base change, then so is $P \to M_2$.
By various instances of 2-out-of-3 for trivial fibrations relative to fibrations, we first have $N \to P$, then $P \to M_1$, and finally $M_2 \to Y$ a trivial fibration.
\end{proof}

\begin{corollary} \label{weak-equiv-vs-trivial-fib}
Under the assumptions of \cref{factorization-invariance}, in any commuting diagram
\[
\xymatrix{
&
  M
  \ar@{->>}[dr]
\\
  X
  \ar@{>->}[ur]^{\triv}
  \ar[rr]
&&
  Y
}
\]
the map $X \to Y$ is a weak equivalence if and only if $M \to Y$ is a trivial fibration.
\qed
\end{corollary}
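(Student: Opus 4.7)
The plan is to derive this as a direct application of \cref{factorization-invariance}, which already does the real work. The statement is a biconditional, so I would treat the two directions separately.

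The forward-easy direction — if $M \fib Y$ is a trivial fibration then $X \to Y$ is a weak equivalence — is essentially by definition. The given factorization $X \trivcof M \trivfib Y$ exhibits $X \to Y$ as a trivial cofibration followed by a trivial fibration, which is exactly the definition of weak equivalence recorded earlier in the section.

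For the converse, assume $X \to Y$ is a weak equivalence. Unfolding the definition again, there exists some factorization $X \trivcof M' \trivfib Y$ into a trivial cofibration followed by a trivial fibration. Since $\TF \subseteq \F$, the map $M' \to Y$ is in particular a fibration, so both the given factorization $X \trivcof M \fib Y$ and the chosen factorization $X \trivcof M' \fib Y$ are of the form required by the hypothesis of \cref{factorization-invariance}. Applying that lemma with the roles $M_1 = M'$ and $M_2 = M$, the fact that $M' \to Y$ is a trivial fibration forces $M \to Y$ to be a trivial fibration as well.

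There is no real obstacle here: the content is already packaged in \cref{factorization-invariance}, and the only thing to check is that the factorization produced by the definition of weak equivalence is a valid second input to that lemma, which follows from the inclusion $\TF \subseteq \F$ assumed at the start of the section. The whole proof is thus a one-line invocation of the previous lemma in each direction.
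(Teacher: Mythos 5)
Your proposal is correct and matches the paper's intent: the corollary is stated with a \qed and no written proof precisely because it is, as you say, an immediate two-line consequence of \cref{factorization-invariance} together with the definition of weak equivalence and the inclusion $\TF \subseteq \F$. Your identification of $M_1 = M'$ and $M_2 = M$ in the reverse direction is the right instantiation.
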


Note that this gives a definition of weak equivalence that does not require a quantification over all possible $(\TC, \TF)$-factorizations.
Rather, it is enough to consider a single $(\TC, \F)$-factorization.

We now have the following sufficient criterion for the given weak factorization systems to form a model structure.

\begin{theorem} \label{model-structure}
Assume that the following assumptions are satisfied:
\begin{enumerate}[label=(\roman*)]
\item the span property holds,
\item trivial fibrations satisfy 2-out-of-3 relative to (\ie in the subcategory of) fibration,
\item trivial fibrations have exchange with trivial cofibrations,
\item fibrations have cartesian exchange with trivial cofibrations.
\end{enumerate}
Then $(\C, \W, \F)$ forms a model structure.
\end{theorem}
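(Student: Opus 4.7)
My plan is to reduce the statement to 2-out-of-3 for $\W$: by \cref{classes-match} the identifications $\TF = \F \cap \W$ and $\TC = \C \cap \W$ already match the given wfs's $(\C, \TF)$ and $(\TC, \F)$ with the two wfs's required of a model structure, so verifying 2-out-of-3 is all that remains. Since assumptions (i) and (ii) are exactly the hypotheses of \cref{factorization-invariance,weak-equiv-vs-trivial-fib}, I will use freely the characterization that a map is a weak equivalence iff in some (equivalently, any) of its $(\TC, \F)$-factorizations the right leg is a trivial fibration.

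For 2-out-of-3, fix composable maps $f \co X \to Y$ and $g \co Y \to Z$ and pick $(\TC, \F)$-factorizations $X \trivcof M \fib Y$ of $f$ and $Y \trivcof N \fib Z$ of $g$. Applying cartesian exchange (iv) to the fibration $M \fib Y$ and trivial cofibration $Y \trivcof N$ produces a pullback square with trivial cofibration $M \trivcof M'$ and fibration $M' \fib N$; the composite $X \trivcof M \trivcof M' \fib N \fib Z$ then yields a $(\TC, \F)$-factorization $X \trivcof M' \fib Z$ of $gf$. Under this common setup, the assertions $f \in \W$, $g \in \W$, $gf \in \W$ become `$M \to Y$ is a trivial fibration', `$N \to Z$ is a trivial fibration', `$M' \to Z$ is a trivial fibration' respectively.

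The easy directions are then straightforward. If $f, g \in \W$, applying exchange (iii) to $M \trivfib Y$ and $Y \trivcof N$ yields a trivial cofibration $M \trivcof M''$ and a trivial fibration $M'' \trivfib N$; composing with $N \trivfib Z$ exhibits $gf$ as a trivial cofibration followed by a trivial fibration, so $gf \in \W$. If $g, gf \in \W$, \ie both $N \trivfib Z$ and $M' \trivfib Z$, then (ii) applied to the pair of fibrations $M' \fib N \fib Z$ upgrades $M' \to N$ to a trivial fibration, and pullback stability then transports this across the pullback square produced by (iv) to conclude $M \to Y$ is a trivial fibration, so $f \in \W$.

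The hardest direction is $f, gf \in \W \Rightarrow g \in \W$, where I have $M \trivfib Y$ and $M' \trivfib Z$ but no direct route to $N \to Z$. I would handle this by applying exchange (iii) to $M \trivfib Y$ and $Y \trivcof N$ to produce a \emph{second} $(\TC, \F)$-factorization $X \trivcof M'' \trivfib N$ of $X \to N$, distinct from $X \trivcof M' \fib N$. \Cref{factorization-invariance}, whose hypotheses are exactly (i) and (ii), then transfers triviality from the right leg $M'' \to N$ to the other right leg $M' \to N$. Finally, (ii) applied to $M' \fib N \fib Z$, now with both $M' \to Z$ and $M' \to N$ trivial, upgrades $N \to Z$ to a trivial fibration, giving $g \in \W$.
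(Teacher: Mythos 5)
Your proof is correct and rests on the same tools as the paper's: both use \cref{classes-match} to reduce to 2-out-of-3 for $\W$, the characterization from \cref{weak-equiv-vs-trivial-fib}, exchange (iii), cartesian exchange (iv), and condition (ii). The one organizational difference is that the paper merges your cases 1 and 3 into a single biconditional: having arranged $M \trivcof M''$ and $M'' \trivfib N$ via exchange, it observes $gf \in \W \Leftrightarrow M'' \to Z$ trivially fibrant $\Leftrightarrow N \to Z$ trivially fibrant $\Leftrightarrow g \in \W$ directly by (ii), which dispatches both directions at once without the extra appeal to \cref{factorization-invariance} that your case 3 makes to compare the two factorizations of $X \to N$. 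Both arguments are equally valid; the paper's is slightly more economical.
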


\begin{proof}
In view of \cref{classes-match}, it only remains to verify that the class $\W$ satisfies 2-out-of-3.
Consider a commuting triangle as follows:
\[
\xymatrix{
  X
  \ar[rr]
  \ar[dr]
&&
  Z
  \rlap{.}
\\&
  Y
  \ar[ur]
}
\]
We need to show: if two of these maps are weak equivalences, then so is the third.
For this, we factor each of $X \to Y$ and $Y \to Z$ into a trivial cofibration followed by a fibration:
\begin{equation} \label{model-structure:1}
\begin{gathered}
\xymatrix{
  X
  \ar@{>->}[r]^{\triv}
&
  U
  \ar@{->>}[d]
\\&
  Y
  \ar@{>->}[r]^{\triv}
&
  V
  \ar@{->>}[d]
\\&&
  Z
\rlap{.}}
\end{gathered}
\end{equation}

Let us first deal with the cases where $X \to Y$ is a weak equivalence.
We can then have $U \to Y$ in~\eqref{model-structure:1} a trivial fibration.
We use the exchange property for $\TC$ and $\TF$ to extend $U \trivfib Y$ along $Y \trivcof V$ as below:
\[
\xymatrix{
  X
  \ar@{>->}[r]^{\triv}
&
  U
  \ar@{->>}[d]_{\triv}
  \ar@{>.>}[r]^{\triv}
&
  M
  \ar@{.>>}[d]_{\triv}
\\&
  Y
  \ar@{>->}[r]_{\triv}
&
  V
  \ar@{->>}[d]
\\
&&
  Z
\rlap{.}}
\]
By \cref{weak-equiv-vs-trivial-fib}, $X \to Z$ is a weak equivalence if and only if $M \to Z$ is a trivial fibration.
Similarly, $Y \to Z$ is a weak equivalence if and only if $V \to Z$ is a trivial fibration.
By 2-out-of-3 for trivial fibrations among fibrations, these two assertions are equivalent.

Let us now deal with the case where $X \to Z$ and $Y \to Z$ are weak equivalences.
We can then have $V \to Z$ in~\eqref{model-structure:1} a trivial fibration.
We use the cartesian extension property for $\TC$ and $\F$ to extend $U \fib Y$ along $Y \trivcof V$ as below:
\[
\xymatrix{
  X
  \ar@{>->}[r]^{\triv}
&
  U
  \ar@{->>}[d]
  \ar@{>.>}[r]^{\triv}
  \fancypullback{[d]}{[r]}{[dr]}
&
  M
  \ar@{.>>}[d]
\\&
  Y
  \ar@{>->}[r]_{\triv}
&
  V
  \ar@{->>}[d]_{\triv}
\\
&&
  Z
\rlap{.}}
\]
By \cref{weak-equiv-vs-trivial-fib}, $M \to Z$ is a trivial fibration.
By 2-out-of-3 for trivial fibrations among fibrations, $M \to V$ is a trivial fibration.
By closure under pullback, $U \to Y$ is a trivial fibration, making $X \to Y$ into a weak equivalence.
\end{proof}

We next state a simplified version of this criterion in case $(\TC, \F)$ satisfies the Frobenius property~\cite{garner:topological-simplicial,gambino-garner:idtypewfs}, \ie that trivial cofibrations are preserved under pullback along fibrations.
For this, we need the following notion.

\begin{definition}[Extension] \label{extension}
A class of maps $\mathbf{B}$ has \emph{extension} along a class of maps $\mathbf{A}$ if for maps $X \to A$ in $\mathbf{B}$ and $A \to B$ in $\mathbf{A}$, there are maps $Y \to B$ in $\mathbf{B}$ and $X \to Y$ forming a pullback square as follows:
\[
\xymatrix{
  X
  \ar@{.>}[r]
  \ar[d]_{\in \mathbf{B}}
  \fancypullback{[d]}{[r]}{[dr]}
&
  Y
  \ar@{.>}[d]^{\in \mathbf{B}}
\\
  A
  \ar[r]_{\in \mathbf{A}}
&
  B
\rlap{.}}
\]
\end{definition}

\begin{theorem} \label{right-proper-model-structure}
Assume that the following assumptions are satisfied:
\begin{enumerate}[label=(\roman*)]
\item the span property holds,
\item trivial fibrations satisfy 2-out-of-3 relative to (\ie in the subcategory of) fibration,
\item fibrations and trivial fibrations extend along trivial cofibrations,
\item the wfs $(\TC, \F$) satisfies the Frobenius property.
\end{enumerate}
Then $(\C, \W, \F)$ forms a right proper model structure.
\end{theorem}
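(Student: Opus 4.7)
The plan is in two parts: first, reduce the model-structure assertion to \cref{model-structure}; second, establish right properness directly using Frobenius and \cref{weak-equiv-vs-trivial-fib}.

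For the reduction, note that assumptions (i) and (ii) here are exactly assumptions (i) and (ii) of \cref{model-structure}, so it only remains to promote the extension hypothesis (iii) to the two exchange hypotheses (iii) and (iv) there. The key observation is that once Frobenius (iv) holds, any extension pullback square whose vertical legs are fibrations automatically has its top map a trivial cofibration: that top map is the pullback of the bottom trivial cofibration along the right fibration. Applied in turn with $\mathbf{B} = \F$ and $\mathbf{B} = \TF$ (the latter being a fortiori a class of fibrations), this promotes extension to cartesian exchange, hence certainly to exchange, yielding both (iii) and (iv) of \cref{model-structure}. Invoking that theorem therefore gives the model structure $(\C, \W, \F)$.

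For right properness, let $f \co A \to B$ be a weak equivalence and $p \co Y \to B$ a fibration. I would factor $f$ as $A \trivcof M \to B$ with $M \to B$ a fibration; since $f$ is a weak equivalence, \cref{weak-equiv-vs-trivial-fib} forces $M \to B$ to be a trivial fibration. Pulling this factorization back along $p$ produces a composite $A \times_B Y \to M \times_B Y \to Y$ equal to the pullback of $f$ along $p$. Stability of trivial fibrations under pullback makes $M \times_B Y \to Y$ a trivial fibration; Frobenius, applied to the fibration $M \times_B Y \to M$ and the trivial cofibration $A \trivcof M$, makes $A \times_B Y \to M \times_B Y$ a trivial cofibration. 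Hence the pullback of $f$ along $p$ factors as a trivial cofibration followed by a trivial fibration, and is therefore a weak equivalence.

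The only mildly subtle step is the Frobenius-powered upgrade from extension to cartesian exchange that feeds \cref{model-structure}; once this is observed, both the model-structure conclusion and right properness are straightforward consequences of the already-established lemmata. I do not anticipate any genuine obstacle.
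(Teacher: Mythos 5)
Your proof is correct and follows essentially the same route as the paper, which simply asserts (without detail) that Frobenius promotes extension to the two exchange conditions of \cref{model-structure} and that right properness follows from Frobenius; you have correctly and carefully unpacked both of these assertions.
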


\begin{proof}
With the Frobenius property, condition~(iii) implies conditions~(iii) and~(iv) of \cref{model-structure}.
Right properness follows from the Frobenius property.
\end{proof}

Note that extension of (trivial) fibrations along trivial cofibrations is also considered in~\cite[Lemma~2.17 and Proposition~2.21]{cisinski-univalence}, but there the right proper model structure is the starting point.
A related extension problem of what they call bundles along trivial cofibrations is considered in~\cite[Lemma~1.7.1]{joyal-tierney-notes}.

\section{Suitable setting}
\label{section:setting}

\subsection{Category}
\label{subsection:category}

We call a category $\cal{E}$ \emph{suitable} if it is locally presentable, locally cartesian closed, and (infinitary) extensive, \ie whose coproducts are van Kampen in the sense of~\cite{heindel:van-kampen-universal}.
Note that any Grothendieck topos is suitable.

Any morphism $f \co X \to Y$ gives rise to a pullback functor $f^* \co \cal{E}_{/Y} \to \cal{E}_{/X}$ with a left adjoint $f_! \co \cal{E}_{/X} \to \cal{E}_{/Y}$ and a right adjoint $f_* \co \cal{E}_{/X} \to \cal{E}_{/Y}$, called pushforward.
We write the exponential of an object $A$ with an object $B$ as $\hom(A, B)$.

Let $\Adj(\cal{E}, \cal{E})$ denote the category of adjunctions between endofunctors on $\cal{E}$.
Recall that the canonical functors $\Adj(\cal{E}, \cal{E}) \to [\cal{E}, \cal{E}]$ and $\Adj(\cal{E}, \cal{E}) \to [\cal{E}, \cal{E}]^\op$ are fully faithful.
We write $(-) \otimes (-)$ and $(-) \obackslash (-)$ for their uncurried versions, application of the left and right adjoint, respectively.
These infix operators are to be read right associative by default.
Often, we will denote an adjunction just by its left adjoint.
The reason for this choice of notation will become evident when discussing functorial cylinders.

We recall the Leibniz construction~\cite{riehl-verity:reedy}, which lifts any bifunctor $F \co \cal{A} \times \cal{B} \to \cal{C}$ to a bifunctor $\widehat{F} \co \cal{A}^\to \times \cal{B}^\to \to \cal{C}^\to$ between categories of arrows assuming that $\cal{C}$ has pushouts, and its properties.
We will apply to a variety of bifunctors.
Note that for the purpose of the Leibniz construction, we will consider exponential and right adjoint application to have signatures $\hom \co \cal{E} \times \cal{E}^\op \to \cal{E}^\op$ and $(-) \obackslash (-) \co [\cal{E}, \cal{E}] \times \cal{E}^\op \to \cal{E}^\op$, resulting in the use of pullbacks instead of pushouts in $\cal{E}$.

\subsection{Functorial cylinder}
\label{subsection:cylinder}

A \emph{suitable functorial cylinder} on a suitable category $\cal{E}$ is an endofunctor $I$ with endpoints inclusions $\delta_0, \delta_1 \co \Id \to I$, contractions $\epsilon \co I \to \Id$, and connections $c^0, c^1 \co I \circ I \to I$.
In addition to the laws imposed on these natural transformations in~\cite{gambino-sattler:frobenius}, we also require that the diagram
\begin{equation} \label{functorial-cylinder:contraction-connection}
\begin{gathered}
\xymatrix@!C@C-2em{
  I \circ I
  \ar[dr]_{\epsilon \cc \epsilon}
  \ar[rr]^-{c^k}
&&
  I
  \ar[dl]^{\epsilon}
\\&
  \Id
}
\end{gathered}
\end{equation}
commutes and that the endpoint inclusions are disjoint, \ie
\begin{equation} \label{functorial-cylinder:disjoint-endpoints}
\begin{gathered}
\xymatrix{
  0
  \ar[r]^{\bot_{\Id}}
  \ar[d]_{\bot_{\Id}}
  \fancypullback{[d]}{[r]}{[dr]}
&
  \Id
  \ar[d]^{\delta_0}
\\
  \Id
  \ar[r]^{\delta_1}
&
  I
}
\end{gathered}
\end{equation}
forms a pullback.
We assume that $I$ has a right adjoint, inducing a functorial cocylinder.
In line with the infix notation introduced before, we write $I \otimes (-)$ for the application of the functorial cylinder $I$ and $I \obackslash (-)$ for its right adjoint functorial cocylinder application.

A suitable functorial cylinder is for example induced by a (left or right) closed monoidal structure with an interval object that carries structure analogous to the one outlined above.

The added law~\eqref{functorial-cylinder:contraction-connection} is required to make the following result hold.

\begin{lemma} \label{cylinder-slicing}
The structure of a suitable functorial cylinder is stable under slicing.
In detail, given a suitable functorial cylinder $I$ on $\cal{E}$, then for any $X \in \cal{E}$ there is a suitable functorial cylinder $I_{/X}$ on $\cal{E}_{/X}$ defined as the composition
\[
\xymatrix@C+1em{
  \cal{E}_{/X}
  \ar[r]^-{I}
&
  \cal{E}_{/(I \otimes X)}
  \ar[r]^-{(\epsilon \otimes X)_!}
&
  \cal{E}_{/X}
\rlap{.}}
\]
The forgetful functor $\cal{E}_{/X} \to \cal{E}$ preserves all the structure of the functorial cylinder.
\end{lemma}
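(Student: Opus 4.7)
The plan is to verify that the data of the given suitable functorial cylinder $I$ on $\cal{E}$ transfer componentwise to $\cal{E}_{/X}$, with each axiom on $\cal{E}_{/X}$ reducing to the corresponding axiom or naturality statement on $\cal{E}$. The second claim, that the forgetful functor $U \co \cal{E}_{/X} \to \cal{E}$ preserves the cylinder structure, then holds by construction, since the transformations on $I_{/X}$ will be defined as components of those on $I$.

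First, I unwind the definition: for $f \co A \to X$, the object $I_{/X}(A, f)$ has underlying object $I \otimes A$ with structure map $(\epsilon \otimes X) \cc (I \otimes f)$ to $X$. In particular $U \cc I_{/X} = I \cc U$ strictly. I then take the $(A, f)$-component of each of $\delta_k, \epsilon, c^k$ on $I_{/X}$ to be the underlying $A$-component of the corresponding transformation on $I$. The only nonformal step is verifying that these are morphisms in the slice, \ie lie over $X$. For $\delta_k$ this reduces, by its naturality, to the standard cylinder law $(\epsilon \otimes X) \cc (\delta_k \otimes X) = \id_X$. For $\epsilon$ it is verbatim the naturality square of $\epsilon$ at $f$. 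For the connection $c^k$, naturality reduces slice-compatibility to the identity
\[
(\epsilon \otimes X) \cc (c^k \otimes X) = (\epsilon \otimes X) \cc I(\epsilon \otimes X)
\]
at $X$, which is exactly~\eqref{functorial-cylinder:contraction-connection} evaluated at $X$. Naturality in $(A, f)$ of each sliced transformation then follows from naturality on $\cal{E}$ via faithfulness of $U$.

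The cylinder equations relating $\delta_k$, $\epsilon$, and $c^k$ transfer at once from $I$ to $I_{/X}$, again because $U$ is faithful and carries each sliced transformation to its original. The disjoint-endpoints pullback~\eqref{functorial-cylinder:disjoint-endpoints} on $I_{/X}$ is inherited from the one on $I$: extensivity of $\cal{E}$ ensures that the initial object of $\cal{E}_{/X}$ is $\emptyset \to X$, and pullbacks in $\cal{E}_{/X}$ are computed in $\cal{E}$. Finally, $I_{/X}$ has a right adjoint by composition: $(\epsilon \otimes X)_!$ is left adjoint to the pullback $(\epsilon \otimes X)^*$, and the sliced cylinder $I \co \cal{E}_{/X} \to \cal{E}_{/(I \otimes X)}$ admits the right adjoint sending $g \co B \to I \otimes X$ to $(I \obackslash B) \times_{I \obackslash (I \otimes X)} X$, formed along the unit $X \to I \obackslash (I \otimes X)$; this is a direct computation via the adjunction $I \dashv (I \obackslash \arghole)$ together with the universal property of pullback.

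The main obstacle is the connection check in the second paragraph; this is precisely the reason for adding axiom~\eqref{functorial-cylinder:contraction-connection} to the definition of suitable functorial cylinder. All remaining verifications are formal consequences of naturality, faithfulness of $U$, and extensivity of $\cal{E}$.
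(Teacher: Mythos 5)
Your proof is correct and fills in exactly the ``standard diagram chasing'' the paper sketches, with the same key observations: the connection-compatibility check is the one place axiom~\eqref{functorial-cylinder:contraction-connection} is needed, the disjoint-endpoints pullback is created by $\cal{E}_{/X} \to \cal{E}$, and the right adjoint is obtained by composing the right adjoints of the two factors (your composite $R(I_{\mathrm{slice}}) \cc (\epsilon \otimes X)^*$ agrees with the paper's description after simplifying the iterated pullbacks). One harmless overstatement: you do not need extensivity to see that the initial object of $\cal{E}_{/X}$ is $0 \to X$ --- the forgetful functor preserves colimits in any slice, so this is automatic.
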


\begin{proof}
Standard diagram chasing.
The right adjoint to $I_{/X}$ is given by the right adjoint of $I$ followed by pullback along $\epsilon \otimes X$.
For~\eqref{functorial-cylinder:disjoint-endpoints}, recall that $\cal{E}_{/X} \to \cal{E}$ creates pullbacks.
\end{proof}

Note that monoidal structures are not stable under arbitrary slicing, giving one justification for our chosen level of abstraction.

We denote $i^1 \co \partial I \to I$ where $\partial I \defeq \Id + \Id$ and $i^1 \defeq [\delta_0, \delta_1]$ the \emph{boundary inclusion} of the functorial cylinder.
For convenience, we write $i^n \co \partial I^n \to I^n$ for the iterated Leibniz composition $i^n \defeq i^1 \hatcirc \cdots \hatcirc i^1$ with $n$ components.

We recall from~\cite{gambino-sattler:frobenius} the notions of homotopy, (strong) homotopy equivalence, and (strong) deformation retract induced by a functorial cylinder.

\subsection{Weak factorization system}
\label{subsection:wfs}

Let $\cal{E}$ be a suitable category with a suitable functorial cylinder.
Let $(\C, \TF)$ be a wfs in $\cal{E}$.
We call \emph{cofibrations} the elements of $\C$ and \emph{trivial fibrations} the elements of $\TF$.

\begin{definition} \label{suitable-wfs}
The wfs $(\C, \TF)$ is called \emph{suitable} if:
\begin{enumerate}[label=(\roman*)]
\item
it is cofibrantly generated,
\item
cofibrations are adhesive and exhaustive,
\item
cofibrations are closed under pullback,
\item
cofibrations are closed under finitary union,
\item
the functorial cylinder $I$ preserves cofibrations,
\item
the endpoint inclusions $\delta_0, \delta_1$ are valued in cofibrations.
\end{enumerate}
\end{definition}

For the notion of adhesiveness, we refer to~\cite{garner-lack:adhesive}.
Essentially, a map is adhesive if pushouts along it are van Kampen.
Exhaustiveness refers to the analogous notion for transfinite compositions introduced in~\cite{shulman:univalence-simplicial}, requiring these colimits to be van Kampen.

Note that the adhesiveness of conditions~(ii), the unions in condition~(iv) are hence automatically effective.

If $\cal{E}$ is a presheaf category and $I$ preserves monomorphisms, an example of a suitable wfs $(\C, \TF)$ is given by taking $\C$ to consist of all monomorphisms.
Cofibrant generation is proven in~\cite[Proposition~1.2.27]{cisinski-asterisque}.

\begin{lemma} \label{suitable-wfs-slicing}
Suitable wfs's are stable under slicing.
In detail, given a suitable wfs $(\C, \TF)$ on $\cal{E}$, then for any $X \in \cal{E}$ the induced wfs $(\C_{/X}, \TF_{/X})$ on $\cal{E}_{/X}$ is suitable as well.
\end{lemma}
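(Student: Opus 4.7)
The plan is to check each of the six conditions of \cref{suitable-wfs} in turn, exploiting that the forgetful functor $U \co \cal{E}_{/X} \to \cal{E}$ creates pullbacks and all colimits, and preserves and reflects monomorphisms and membership in $\C$ or $\TF$ (the induced wfs on $\cal{E}_{/X}$ is by definition the one whose classes are the $U$-preimages of $\C$ and $\TF$, and it is standard that this is a wfs). The heart of the argument is that most of the suitability conditions are properties of the underlying map in $\cal{E}$ that transfer along $U$ in both directions.

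First, for cofibrant generation (i), let $\cal{I}$ be a small generating set for $\C$. I would form the set
\[
\cal{I}_{/X} \defeq \set{(f, b)}{f \co A \to B \in \cal{I}, b \co B \to X},
\]
reading $(f, b)$ as the map $f$ equipped with codomain structure map $b$ and domain structure map $b \cc f$. This is a set by local presentability (in particular local smallness) of $\cal{E}$. A straightforward transfinite small object argument, transported through $U$, shows that $\cal{I}_{/X}$ generates $\C_{/X}$; equivalently, the right lifting class of $\cal{I}_{/X}$ in $\cal{E}_{/X}$ is precisely $\TF_{/X}$, since lifting problems in the slice are exactly lifting problems in $\cal{E}$ equipped with compatible structure maps to $X$, and such compatible structure is forced by the codomain of the right-hand map.

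For (ii), adhesiveness and exhaustiveness are conditions on van Kampen pushouts and $\omega$-chains. Both types of colimits in $\cal{E}_{/X}$ are created by $U$, and the van Kampen condition is preserved by slicing because the slices of $\cal{E}_{/X}$ over an object $Y \to X$ coincide canonically with the slice $\cal{E}_{/Y}$; the necessary descent and effectivity diagrams in $\cal{E}_{/X}$ therefore translate directly to those in $\cal{E}$. Conditions (iii) and (iv) are immediate: pullbacks and finitary unions in $\cal{E}_{/X}$ are created by $U$, so closure of $\C$ under these operations in $\cal{E}$ entails closure of $\C_{/X}$. For (v), observe that $I_{/X}$ of a map $f$ in $\cal{E}_{/X}$ has underlying map $I \otimes f$ in $\cal{E}$ (post-composition with $\epsilon \otimes X$ does not alter the underlying arrow), so if $I$ preserves cofibrations in $\cal{E}$ then $I_{/X}$ preserves cofibrations in $\cal{E}_{/X}$. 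Condition (vi) is analogous: the endpoint inclusion of $I_{/X}$ at $f \co A \to X$ has underlying map $\delta_0 \otimes A \co A \to I \otimes A$ (respectively with $\delta_1$), which is a cofibration in $\cal{E}$ by assumption on $I$, and hence a cofibration in $\cal{E}_{/X}$.

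The main obstacle, such as it is, is (i): one needs to turn a generating set for $\C$ into a generating set for $\C_{/X}$ without inflating to a proper class. This is handled as above by parametrizing over hom-sets $\cal{E}(B, X)$ for $B$ the codomain of a generator, which is legitimate by local smallness. Once generation is in hand, everything else reduces to the observation that $U$ transports the relevant (co)limits and properties of cofibrations between $\cal{E}$ and $\cal{E}_{/X}$.
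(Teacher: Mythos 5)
Your proof is correct and follows essentially the same route as the paper's: the paper likewise argues condition by condition, appealing to cofibrant generation of the sliced wfs as classical, to the fact that the forgetful functor $\cal{E}_{/X} \to \cal{E}$ creates colimits and pullbacks (hence conditions (ii)--(iv) transfer), and to \cref{cylinder-slicing} (preservation of the cylinder structure by the forgetful functor) for conditions (v)--(vi). You have simply unfolded the ``classical'' step by exhibiting the generating set $\cal{I}_{/X}$ and checking the lifting correspondence explicitly.
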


\begin{proof}
The existence and cofibrant generation of $(\C_{/X}, \TF_{/X})$ on $\cal{E}_{/X}$ is classical, with the classes created by the forgetful functor $\cal{E}_{/X} \to \cal{E}$.
Note that $\cal{E}_{/X} \to \cal{E}$ creates colimits and pullbacks, hence reflects pre-adhesive morphisms.
It follows that conditions~(ii) to~(iv) hold for the wfs on $\cal{E}_{/X}$.
Conditions~(v) and~(vi) hold since $\cal{E}_{/X} \to \cal{E}$ preserves the structure of the functorial cylinder given by \cref{cylinder-slicing}.
\end{proof}

Note that, by the nullary case of condition~(iv) of \cref{suitable-wfs}, every object is cofibrant, \ie the map $\bot_X \co 0 \to X$ is a cofibration for $x \in \cal{E}$.
It follows that every trivial fibration has a section.

\begin{lemma} \label{pushout-product}
Under conditions~(ii) and~(iii) of \cref{suitable-wfs}, condition~(iv) is equivalent to stability of cofibrations under finitary pushout product.
\end{lemma}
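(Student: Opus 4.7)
The plan is to establish both implications, using pullback stability of cofibrations (condition~(iii)) as the bridge in each direction, and adhesiveness (condition~(ii)) to identify binary unions with canonical pushouts. In both directions, one must separately handle the nullary and binary cases; the nullary pushout product is the map $0 \to 1$, while the nullary union is the map $0 \cof X$ (the empty union of subobjects of $X$).

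For the direction from unions to pushout products, let $i \co A \cof B$ and $j \co C \cof D$ be cofibrations. Pulling $i$ back along $B \times D \to B$ and $j$ back along $B \times D \to D$ shows, by~(iii), that $A \times D$ and $B \times C$ are cofibrant subobjects of $B \times D$. Their intersection is $A \times C$, so by adhesiveness the union $A \times D \cup B \times C$ inside $B \times D$ is computed as the pushout $A \times D \cup_{A \times C} B \times C$. The binary union assumption then makes this union a cofibration into $B \times D$, which is precisely $i \hattimes j$. The nullary case reduces to the empty union of subobjects of the terminal object, which is $0 \cof 1$.

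For the direction from pushout products to unions, I would first obtain $0 \cof X$ for arbitrary $X$ by pulling back the nullary pushout product $0 \cof 1$ along $X \to 1$ via~(iii). For binary unions, given $i \co A \cof X$ and $j \co B \cof X$, form $i \hattimes j \co A \times X \cup_{A \times B} X \times B \cof X \times X$ and pull it back along the diagonal $\Delta \co X \to X \times X$. The pullbacks along $\Delta$ of $A \times X$, $X \times B$, and $A \times B$ are $A$, $B$, and $A \cap B$ respectively. Since the pushout defining the domain of $i \hattimes j$ is taken along the cofibrations $A \times B \cof A \times X$ and $A \times B \cof X \times B$, adhesiveness makes this pushout van Kampen, so pullback along $\Delta$ distributes through it. The resulting cofibration (by~(iii)) into $X$ is therefore $A \cup_{A \cap B} B$, which by adhesiveness is the union $A \cup B$.

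The main obstacle, and the essential place where adhesiveness enters, is the commutation of pullback with pushout in the second direction: without the van Kampen property one cannot identify the fibre of the Leibniz product domain over the diagonal with the union of the two original subobjects. Everything else is a bookkeeping exercise in pullback stability.
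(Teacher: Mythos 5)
Your proof is correct and takes essentially the same approach as the paper: the paper's terse two-line argument (``pushout product decomposes as base change of inputs followed by effective union'' for one direction, and ``effective union writes as pushout product followed by pullback along the diagonal'' for the other) is exactly the decomposition you spell out, including the van~Kampen step that justifies commuting the pullback along $\Delta$ past the pushout. Your explicit handling of the nullary case is a small addition the paper leaves implicit.
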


\begin{proof}
For the forward direction, note that pushout product can be decomposed as base change of inputs followed by effective union.
For the reverse direction, note that effective union writes as pushout product followed by pullback along the diagonal.
\end{proof}

\begin{corollary} \label{cofibration-exp-trivial-fibrations}
Trivial fibrations are stable under Leibniz exponential with cofibrations.
\qed
\end{corollary}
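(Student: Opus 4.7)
The plan is to exploit the standard Leibniz two-variable adjunction between pushout-product with $\times$ and the Leibniz exponential $\widehat{\hom}$. Since $\cal{E}$ is (locally) cartesian closed, for any objects $A, B, Y$ we have the natural bijection from maps $A \times B \to Y$ to maps $A \to \hom(B, Y)$, and this lifts via the Leibniz construction to a two-variable adjunction on arrow categories: for arrows $j, i, p$ in $\cal{E}$,
\[
  j \widehat{\times} i \mathrel{\liftr{}} p
  \quad\Longleftrightarrow\quad
  j \mathrel{\liftr{}} \widehat{\hom}(i, p).
\]
(This is immediate from the universal properties defining pushout-products and pullback-homs, together with the ordinary adjunction.)

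Now fix a cofibration $i \in \C$ and a trivial fibration $p \in \TF$. We want $\widehat{\hom}(i, p) \in \TF$. Since $(\C, \TF)$ is a weak factorization system, $\TF$ equals $\liftr{\C}$, so it suffices to show $j \mathrel{\liftr{}} \widehat{\hom}(i, p)$ for every cofibration $j \in \C$. By the Leibniz adjunction above, this is equivalent to $j \widehat{\times} i \mathrel{\liftr{}} p$.

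But by \cref{pushout-product}, the cofibrations $\C$ are closed under binary Leibniz pushout-product, so $j \widehat{\times} i \in \C$. Hence $j \widehat{\times} i \mathrel{\liftr{}} p$ holds because $p$ is a trivial fibration, and we are done. There is no real obstacle here; the only content is the Leibniz adjunction, which is purely formal, and the already-established closure of $\C$ under pushout-product.
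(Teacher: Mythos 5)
Your argument is exactly the one the paper intends: the corollary is marked with a bare \qed precisely because it follows immediately from \cref{pushout-product} via the standard two-variable Leibniz adjunction between pushout-product and pullback-hom, together with $\TF = \liftr{\C}$. No discrepancy to report.
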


\begin{lemma} \label{cofib-leibniz}
Let $u \co F \to G$ be a natural transformation between endofunctors on $\cal{E}$ such that $G$ preserves cofibrations.
Under conditions~(ii) and~(iv) of \cref{suitable-wfs}, the following are equivalent:
\begin{enumerate}[label=(\roman*)]
\item
$u$ is valued in cofibrations,
\item
Leibniz application of $u$ preserves cofibrations,
\end{enumerate}
and if $u$ has a right adjoint:
\begin{enumerate}[label=(\roman*),start=3]
\item
right adjoint Leibniz application of $u$ preserves trivial fibrations.
\end{enumerate}
\end{lemma}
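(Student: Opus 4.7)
The plan is to handle (ii)$\iff$(iii) by the standard adjunction calculus and (i)$\iff$(ii) by a direct analysis of the pushout defining the Leibniz application.

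For (ii) $\iff$ (iii), the argument rests on the Joyal-Tierney calculus: since $u$ admits a right adjoint, the bifunctors $(-) \otimes (-)$ and $(-) \obackslash (-)$ fit into a two-variable adjunction, which the Leibniz construction preserves. This yields a natural bijection between lifting problems, so that $u \hatotimes j$ has the left lifting property against $p$ iff $j$ has the left lifting property against $u \hatobackslash p$. Applied to the wfs $(\C, \TF)$ and the characterization of its two classes via lifting, this gives the equivalence of (ii) and (iii).

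The bulk of the work lies in (i) $\Rightarrow$ (ii). Given a cofibration $j \co A \cof B$ and assuming $u$ is pointwise a cofibration, $u_B \co F(B) \cof G(B)$ is a cofibration; together with $G(j) \co G(A) \cof G(B)$ (since $G$ preserves cofibrations), this exhibits $G(A)$ and $F(B)$ as cofibration-subobjects of $G(B)$. By condition~(iv), their union $G(A) \cup F(B) \hookrightarrow G(B)$ is a cofibration, and by adhesivity~(ii) this union equals the pushout $G(A) +_P F(B)$ over the intersection $P = G(A) \times_{G(B)} F(B)$. I then factor the Leibniz map canonically as
\[
  G(A) +_{F(A)} F(B) \to G(A) +_P F(B) \hookrightarrow G(B),
\]
the second factor being the union inclusion (a cofibration). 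The first factor is induced by the canonical comparison $F(A) \to P$ coming from the naturality square. The key technical step is to show this comparison is an isomorphism, i.e., the naturality square is a pullback: the pushout in the source is along the monomorphism $u_A$, hence van Kampen by adhesivity, so it is itself a pullback; combined with $G(j)$ and $u_B$ being monomorphisms, this forces $F(A) \cong P$. Consequently the Leibniz map coincides with the union inclusion and is a cofibration.

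For (ii) $\Rightarrow$ (i), I test (ii) on the cofibration $\canonical_X \co 0 \to X$, which is a cofibration because every object is cofibrant (nullary case of~(iv), noted after \cref{suitable-wfs-slicing}). The resulting Leibniz map $u \hatcirc \canonical_X \co G(0) +_{F(0)} F(X) \to G(X)$ is a cofibration, and $u_X$ factors through it as $F(X) \to G(0) +_{F(0)} F(X) \xrightarrow{u \hatcirc \canonical_X} G(X)$, where the first arrow is the pushout of $u_0$ along $F(\canonical_X)$. Once $u_0$ is known to be a cofibration—handled as a base case by a direct argument—the first arrow is a cofibration by closure under pushout, and $u_X$ is a cofibration by closure under composition. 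The main obstacle throughout is the cartesianness verification in (i) $\Rightarrow$ (ii), where the adhesive structure and $G$-preservation of cofibrations interact essentially.
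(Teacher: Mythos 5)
Your outline matches the paper's strategy at a high level: identify the Leibniz map with the effective union of the subobjects $G(j)$ and $u_B$ of $G(B)$, get (ii)$\Leftrightarrow$(iii) by adjointness, and read off (ii)$\Rightarrow$(i) from $\bot_X\co 0\to X$. But the central step in your (i)$\Rightarrow$(ii) argument has a genuine gap, and it is precisely the step the paper's one-liner leaves implicit. You correctly observe that the Leibniz map factors through the union inclusion $G(A)+_P F(B)\hookrightarrow G(B)$ (with $P:=G(A)\times_{G(B)}F(B)$), and that the first factor is an isomorphism exactly when the naturality square of $u$ at $j$ is a pullback, i.e.\ when $F(A)\to P$ is invertible. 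Your proof of this is circular: van Kampenness of the pushout defining $Q:=G(A)+_{F(A)}F(B)$ yields $F(A)\cong G(A)\times_Q F(B)$, but to pass from a pullback over $Q$ to the pullback over $G(B)$ you would need the Leibniz map $Q\to G(B)$ to be a monomorphism — which is (a weakening of) the very conclusion you are establishing; monicity of $G(j)$ and $u_B$ alone does not give it. In fact the implication fails without a cartesianness hypothesis on $u$: take $\cal{E}=\Set$, $G=\const_1$, $F$ the subfunctor with $F(X)=1$ for $X\neq 0$ and $F(0)=0$, $u$ the inclusion, and $m\co 0\to 1$; then $u\hatotimes m$ is $1+1\to 1$, not a monomorphism. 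The paper's assertion that ``$u\hatotimes m$ is the effective union'' silently assumes $F(A)\cong P$; in all its uses ($\delta_k$, $i^1$, arising from tensoring with a subobject inclusion of the interval) the naturality square is a pullback for structural reasons, so nothing goes wrong downstream, but there is no general argument and you should not try to manufacture one.

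A smaller gap is in your (ii)$\Rightarrow$(i): the paper's $u\otimes X=u\hatotimes\bot_X$ holds on the nose only when $F(0)\to G(0)$ is invertible (as it is for cocontinuous $F,G$, in particular left adjoints). You keep the extra factor through $G(0)+_{F(0)}F(X)$ but then defer ``$u_0$ is a cofibration'' to an unspecified ``direct argument''; condition (ii) gives only that $u\hatotimes\bot_0$ (which is $\id_{G(0)}$) is a cofibration, so no such argument exists from the stated hypotheses alone, and the base case must instead come from the cocontinuity of the functors involved, as in the paper's intended setting of $\Adj(\cal{E},\cal{E})$.
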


\begin{proof}
From~(i) to~(ii), note that $u \hatotimes m$ is the effective union of $G \otimes m$ and $u \otimes B$ for any cofibration $m \co A \to B$.
From~(ii) to~(i), note that $u \otimes X = u \hatotimes \bot_X$ for $X \in \cal{E}$.
The equivalence of~(ii) and~(iii) follows from adjointness.
\end{proof}

\begin{corollary} \label{pushout-application}
Under conditions~(iv) and~(v) of \cref{suitable-wfs}, condition~(vi) is equivalent to stability of cofibrations under Leibniz application of endpoint inclusions.
Trivial fibrations are stable under right adjoint Leibniz application of endpoint inclusions.
\qed
\end{corollary}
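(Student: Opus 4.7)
The plan is to invoke \cref{cofib-leibniz} with $u = \delta_k$, $F = \Id$, and $G = I$, separately for each endpoint $k \in \{0, 1\}$. Condition~(v) of \cref{suitable-wfs} provides that $G = I$ preserves cofibrations, and conditions~(ii) and~(iv) are part of the hypotheses of the present corollary, so all hypotheses of \cref{cofib-leibniz} are in place.

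The equivalence of items~(i) and~(ii) of \cref{cofib-leibniz} then translates, for each $k$, into the equivalence between $\delta_k$ being valued in cofibrations and Leibniz application by $\delta_k$ preserving cofibrations. Conjoining across $k \in \{0,1\}$ yields the first statement of the corollary, since condition~(vi) of \cref{suitable-wfs} is precisely the assertion that both $\delta_0$ and $\delta_1$ are valued in cofibrations.

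For the second statement, the only further ingredient needed is a right adjoint to $\delta_k$ in the category $\Adj(\cal{E}, \cal{E})$ of adjunctions between endofunctors. Since $\Id$ is self-adjoint and $I$ is equipped with a right adjoint $I \obackslash (-)$ as part of the suitable functorial cylinder data introduced in \cref{subsection:cylinder}, the natural transformation $\delta_k \co \Id \to I$ conjugates to a natural transformation $I \obackslash (-) \to \Id$, namely the corresponding endpoint evaluation of the cocylinder; this furnishes the required morphism at the level of adjunctions. Item~(iii) of \cref{cofib-leibniz} then delivers stability of trivial fibrations under right adjoint Leibniz application of $\delta_k$, as claimed.

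There is no genuine obstacle here beyond organizational care: the work lies entirely in verifying that \cref{cofib-leibniz} can be applied as a black box, which reduces to identifying the relevant endofunctors and checking that the endpoint inclusions and their adjoint conjugates fit the required format.
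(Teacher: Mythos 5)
Your proof is correct and is precisely the argument the paper intends: the corollary is a direct application of \cref{cofib-leibniz} with $u = \delta_k \co \Id \to I$, where condition~(v) supplies that $G = I$ preserves cofibrations, and the right adjoint of $I$ (part of the suitable cylinder data) makes $\delta_k$ a morphism of adjunctions so that item~(iii) of \cref{cofib-leibniz} applies. One small bookkeeping remark: you write that conditions~(ii) and~(iv) are among the hypotheses of the corollary, but the corollary's statement lists only~(iv) and~(v); condition~(ii), which \cref{cofib-leibniz} does require, is being treated as ambient in this section (it is part of the suitable wfs assumption), so your reading is harmless even if the attribution is slightly off.
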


\begin{corollary} \label{boundary-inclusion}
The boundary inclusion $i^1 \co \partial I \to I$ is valued in cofibrations.
Leibniz application of $i^1$ preserves cofibrations.
Right adjoint Leibniz application of $i^1$ preserves trivial fibrations.
\qed
\end{corollary}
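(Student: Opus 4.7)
The plan is to obtain all three assertions of the corollary uniformly by a single application of \cref{cofib-leibniz} to the natural transformation $u \defeq i^1 \co \partial I \to I$ between endofunctors on $\cal{E}$. The codomain $I$ preserves cofibrations by condition~(v) of \cref{suitable-wfs}, so the standing hypothesis of that lemma is met; moreover $i^1$ admits a right adjoint in $\Adj(\cal{E}, \cal{E})$ since $\partial I = \Id + \Id$ has right adjoint $\Id \times \Id$ and $I$ has one by the definition of a suitable functorial cylinder. The three equivalent conditions (i)--(iii) of \cref{cofib-leibniz} are then exactly, in order, the three assertions of the corollary, so it suffices to verify any single one.

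I would target condition~(i) of \cref{cofib-leibniz}, namely that the components $i^1_X \co X + X \to I \otimes X$ are cofibrations. Unwinding the definition, $i^1_X = [\delta_0 \otimes X, \delta_1 \otimes X]$. Each of $\delta_0 \otimes X$ and $\delta_1 \otimes X$ is a cofibration, hence in particular a monomorphism, by condition~(vi) of \cref{suitable-wfs}; and since pullbacks in $[\cal{E}, \cal{E}]$ are computed pointwise, the disjointness square~\eqref{functorial-cylinder:disjoint-endpoints} evaluated at $X$ shows that their intersection as subobjects of $I \otimes X$ is the initial object $0$. Consequently their effective union inside $I \otimes X$ is the pushout $X \sqcup_0 X = X + X$, whose structure map to $I \otimes X$ is precisely $i^1_X$. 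Closure of cofibrations under finitary union (condition~(iv) of \cref{suitable-wfs}) then delivers that $i^1_X$ is a cofibration, as required.

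I do not foresee any substantial obstacle: the only point to verify carefully is that the copairing $[\delta_0, \delta_1]$ really coincides with the canonical structure map out of the union, which is immediate from the universal property of $X \sqcup_0 X$ together with the definition $i^1 \defeq [\delta_0, \delta_1]$. Routing everything through \cref{cofib-leibniz} is advantageous in that the right-adjoint Leibniz statement, which would otherwise require its own mate-calculus argument, falls out for free from the equivalence of (ii) and (iii) of that lemma.
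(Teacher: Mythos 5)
Your proof is correct and takes essentially the route the paper intends: the \qed signals that the corollary follows from \cref{cofib-leibniz} together with the disjointness square~\eqref{functorial-cylinder:disjoint-endpoints} and closure under finitary (effective) union, which is exactly the chain you spell out. Your verification that $i^1_X$ is the effective union of the two disjoint endpoint cofibrations, and the check that $i^1$ has a right adjoint so condition~(iii) of \cref{cofib-leibniz} is meaningful, are both accurate.
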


Note that trivial fibrations are closed under pushforward along arbitrary maps, the adjoint formulation of condition~(iii) of \cref{suitable-wfs}.
Note also that cofibrations are monomorphisms by condition~(ii) of \cref{suitable-wfs}.
For a cofibration $m$, it follows that the adjunction $m^* \dashv m_*$ of pullback and pushforward along $m$ is a reflection.
This implies the following statement, referred to as Joyal's trick in~\cite{cisinski-univalence}.

\begin{lemma} \label{triv-fib-extend-along-cof}
Trivial fibrations extend along cofibrations in the sense of \cref{extension}.
\qed
\end{lemma}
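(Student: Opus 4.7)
The plan is to apply directly what the remark preceding the lemma calls ``Joyal's trick''. Given a cofibration $m \co A \to B$ and a trivial fibration $p \co X \to A$, I would form the pushforward $m_*(p) \co m_* X \to B$ and take this as the extension $Y \to B$. The morphism $X \to Y$ is obtained from the unit $\eta \co p \to m^* m_* p$ of the adjunction $m^* \dashv m_*$ in the slice categories, viewed as a map in $\cal{E}$.

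The square is a pullback essentially for free: since cofibrations are monomorphisms (condition~(ii) of \cref{suitable-wfs}), $m$ is mono, hence the adjunction $m^* \dashv m_*$ between slice categories is a reflection, \ie the counit $\epsilon \co m^* m_* \to \Id_{\cal{E}_{/A}}$ is an isomorphism. By a triangle identity this forces $\eta_p \co p \to m^* m_* p$ to be an isomorphism as well, so the commutative square
\[
\xymatrix{
  X
  \ar[r]
  \ar@{->>}[d]_{p}
&
  m_* X
  \ar@{->>}[d]^{m_* p}
\\
  A
  \ar@{>->}[r]_m
&
  B
}
\]
is cartesian (its pullback along $m$ identifies with $m^* m_* p \iso p$ up to the iso of the counit).

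It remains to observe that $m_* p$ is a trivial fibration, which is exactly the content of the remark preceding the lemma: for any cofibration $n \co C \to D$ over $B$, a lifting problem of $n$ against $m_* p$ transposes under $m^* \dashv m_*$ to a lifting problem of $m^* n$ against $p$; since cofibrations are closed under pullback by condition~(iii) of \cref{suitable-wfs}, $m^* n$ is again a cofibration, and $p \in \TF$ provides the required filler. Nothing in this argument is delicate; the only subtlety worth flagging is that one must know cofibrations are monomorphisms before invoking reflectivity of $m^* \dashv m_*$, but this has already been extracted from condition~(ii).
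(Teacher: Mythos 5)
Your proof is correct and follows the same route the paper takes (pushforward along $m$, then exploit that $m$ is a monomorphism so $m^*\dashv m_*$ is a reflection). One small slip: the map $p \to m^*m_*p$ is not the unit of $m^*\dashv m_*$ (whose unit has type $\Id_{\cal{E}_{/B}} \to m_*m^*$); it is simply the inverse of the counit $\epsilon_p \co m^*m_*p \to p$, which exists because $\epsilon$ is invertible, and this already gives the cartesian square without any appeal to a triangle identity.
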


\section{Preliminary notions}
\label{section:preliminary}

\subsection{Fibrations}
\label{subsection:fibrations}

We recall from~\cite{gambino-sattler:frobenius} how the wfs $(\C, \TF)$ gives rise to a second wfs $(\TC, \F)$ of \emph{trivial cofibrations} and \emph{fibrations}.
Let $\cal{I}$ be a set of generators for the suitable wfs $(\C, \TF)$, guaranteed to exist by condition~(i) of \cref{suitable-wfs}.
We let $\cal{J}$ be the set of Leibniz applications of endpoint inclusions to generating cofibrations, \ie
\begin{equation} \label{gen-triv-cof}
\cal{J} \defeq \braces{\delta_0, \delta_1} \hatotimes \cal{I} = \set{\delta_k \hatotimes m}{k \in \braces{0, 1}, m \in \cal{I}}
.\end{equation}
Since $\cal{E}$ is locally presentable, $\cal{J}$ generates a wfs $(\TC, \F)$.
Note that, by adjointness, a map $p$ is a fibration if and only if $\delta_k \hatobackslash p$ is a trivial fibration, \ie $\delta_k \hatotimes m \pitchfork p$ for all $m \in \C$, for $k \in \braces{0, 1}$.

\begin{remark} \label{fib-not-stable-under-slicing}
The construction of the wfs $(\TC, \F)$ from the wfs $(\C, \TF)$ does not commute with slicing, \ie \cref{cylinder-slicing,suitable-wfs-slicing} do not extend to the wfs $(\TC, \F)$.
Let us illuminate this subtlety.

Given a suitable wfs $(\C, \TF)$ and $X \in \cal{E}$, the induced wfs $(\C_{/X}, \TF_{/X})$ on $\cal{E}_{/X}$ is suitable as well by \cref{suitable-wfs-slicing}.
Let $(\TC, \F)$ and $(\TC', \F')$ denote the wfs's of trivial cofibrations and fibrations generated by $\cal{J}$ and $\cal{J}'$ as defined in~\eqref{gen-triv-cof} from $(\C, \TF)$ and $(\C_{/X}, \TF_{/X})$ in $\cal{E}$ and $\cal{E}_{/X}$, respectively.
The wfs $(\TC, \F)$ generated by $\cal{J}$ in $\cal{E}$ induces a wfs $(\TC_{/X}, \F_{/X})$ generated by $\cal{J}_{/X}$ in $\cal{E}_{/X}$.

We have $\cal{J}' \subseteq \cal{J}_{/X}$: Leibniz application in the slice forces the codomains of maps in $\cal{J}'$, of the form $I \otimes B \to X$ with $B \in \cal{E}$, to lift through $\epsilon \otimes X$.
It follows that $\TC' \subseteq \TC_{/X}$ and $\F_{/X} \subseteq \F'$, but the reverse inclusions do not hold in general.
In particular, a map in $\cal{E}_{/X}$ is in $\F'$ if its underlying map is a fibration, but the converse does not hold in general.

To resolve the double meaning of the notion of a fibration in the slice $\cal{E}_{/X}$, we will always mean an element of $\F_{/X}$ rather than $\F'$ in the rest of this document.
\end{remark}

Our current setting of a suitable category with a suitable functorial cylinder and a suitable wfs are stronger than the setting and notion of a suitable wfs in \cite{gambino-sattler:frobenius}.
We record the main result of its first part so that we may use it.

\begin{theorem}[Theorem~3.8 of~\cite{gambino-sattler:frobenius}] \label{frobenius}
The wfs $(\TC, \F)$ satisfies the Frobenius property.
\end{theorem}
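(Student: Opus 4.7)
The plan is to prove Frobenius via its adjoint formulation, which asserts that if $p \co Y \to X$ and $q \co Z \to Y$ are fibrations, then the pushforward $p_*(q) \co p_*(Z) \to X$ is again a fibration. By the adjunction $p^* \dashv p_*$, this is logically equivalent to stability of trivial cofibrations under pullback along $p$. To verify that $p_*(q)$ is a fibration, it suffices to check the right lifting property against the generating set $\cal{J} = \braces{\delta_0, \delta_1} \hatotimes \cal{I}$. Using the standard two-variable Leibniz adjunction, a lifting problem $(\delta_k \hatotimes m) \pitchfork p_*(q)$ transposes to a lifting problem in $\cal{E}_{/Y}$ against $q$, where the left-hand map is built from the pullback $p^*(m)$, which remains a cofibration by condition~(iii) of \cref{suitable-wfs}.

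The core of the argument is to build the required diagonal lift using the functorial structure of the cylinder. Starting from the given boundary datum, I would first compose with the cylinder contraction $\epsilon$ to collapse the interval direction, producing an auxiliary lifting problem of the form $(\delta_k \hatotimes p^*(m)) \pitchfork q$; this is solvable because $q$ is a fibration. Then the connection $c^k$, together with the compatibility law~\eqref{functorial-cylinder:contraction-connection} and a second application of the fibrancy of $q$, lets me homotope this auxiliary lift back to one compatible with the original boundary data, yielding the desired diagonal.

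The main obstacle is the combinatorial bookkeeping of the iterated Leibniz constructions: the transposed lifting problem involves pushout-products of cofibrations with endpoint inclusions, and one must carefully use adhesiveness and exhaustiveness of cofibrations (to keep the pushouts defining these maps well-behaved after pullback along $p$), closure of cofibrations under effective union and Leibniz application of endpoint inclusions (\cref{boundary-inclusion}), and the disjointness of endpoints~\eqref{functorial-cylinder:disjoint-endpoints} (to ensure that the two invocations of fibrancy of $q$ paste into a coherent lift rather than conflict at the boundary). Given that the statement is quoted from~\cite{gambino-sattler:frobenius}, I would ultimately defer the detailed verification to the argument there, noting that the hypotheses on the functorial cylinder and wfs assumed in \cref{section:setting} suffice to instantiate it.
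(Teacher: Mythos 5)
The paper offers no proof here: the statement is reproduced verbatim as Theorem~3.8 of \cite{gambino-sattler:frobenius}, and the sentence immediately preceding it explicitly records that the result is being imported from that reference (whose hypotheses the present, stronger setting subsumes). Your final sentence, deferring to the cited argument after noting that the hypotheses of \cref{section:setting} suffice to instantiate it, is exactly the paper's own stance and is the correct move. The preceding sketch---adjoint reformulation via $p^* \dashv p_*$, reduction to the generators $\cal{J}$, and use of connections and the contraction to repair the transposed lifting problem---is a plausible outline of the cited proof, but since the paper reproduces none of it there is nothing in the source to check it against. One caution: the ``auxiliary'' lifting problem $(\delta_k \hatotimes p^*(m)) \pitchfork q$ you introduce is solved immediately by fibrancy of $q$; the actual burden is relating the genuinely transposed problem $p^*(\delta_k \hatotimes m) \pitchfork q$ to something of that shape, and that mismatch between pullback and Leibniz product is precisely the slicing subtlety the paper flags in \cref{fib-not-stable-under-slicing}, so that step is where the content of the cited proof sits rather than being a routine manipulation.
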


We now make the following additional assumptions:
\begin{enumerate}[label=(A.\arabic*)]
\item \label{boundary-fibration}
trivial cofibrations are closed under Leibniz application of $i^1 \co \partial I \to I$.
\item \label{local}
fibrations are \emph{local}, in the sense that given a cartesian diagram $F$ in $\cal{E}^\to$ with a cartesian colimiting cocone with colimit $p$, if all objects in $F$ are fibrations, then so is $p$.
\end{enumerate}

We will see in \cref{path-objects} that assumption~\ref{boundary-fibration} is necessary to get well-behaved path objects.
Using adjointness, it is equivalent to any of the following conditions:
\begin{itemize}
\item
fibrations are closed under right adjoint Leibniz application of $i^1$,
\item
Leibniz application of $i^1$ maps generating trivial cofibrations (\ie elements of $\cal{J}$ as defined in~\eqref{gen-triv-cof}) to trivial cofibrations.
\end{itemize}

\begin{remark} \label{swap-arguments}
A sufficient condition for satisfying~\ref{boundary-fibration} is given by a symmetry of the functorial cylinder in the form of a natural isomorphism $I \circ I \cong I \circ I$ that coheres with its other structure in the evident way.
We can then show $i^1 \hatotimes \delta_k \cong \delta_k \hatotimes i^1$.
It follows that $\cal{J}$ as defined in~\eqref{gen-triv-cof} is already closed (up to isomorphism) under Leibniz application of $i^1$: for $m \in \C$, we have $i^1 \hatotimes \delta_k \hatotimes m \cong \delta_k \hatotimes i^1 \hatotimes m \in \cal{J}'$ as $i^1 \hatotimes m \in \C$ by \cref{boundary-inclusion}.
In that case, we have that $\cal{J}$ coincides with Cisinski's generators for \emph{naive fibrations}~\cite{cisinski-asterisque}.

We note that our development does not seem easily amendable to closing the generating trivial cofibrations $\cal{J}$ under Leibniz application with $i^1$.
The difficulties lie in two points: first, right properness of $(\TC, \F)$, and second, extension of $\F$ along $\TC$ using the equivalence extension property.
\end{remark}

\begin{remark} \label{tiny-codomains}
A sufficient condition for locality~\ref{local} of fibrations is that the generating trivial cofibrations $\cal{J}$ have tiny objects as codomains.
An object $X \in \cal{E}$ is \emph{tiny} if $\cal{E}(X, -)$ preserves colimits.

In the setting of a presheaf category, our locality condition \ref{local} corresponds precisely to the one of~\cite[Definition~3.7]{cisinski-univalence}, which requires a map to be a fibration as soon as all its pullbacks to representables are fibrations.
However, we prefer not having to refer to specific features of the underlying category such as representables.
Both conditions are satisfied if generating trivial cofibrations have representable codomain as in~\cite{shulman:univalence-simplicial}.
\end{remark}

\begin{lemma}
Consider a commuting triangle as follows:
\[
\xymatrix{
&
  Y
  \ar[dr]^{q}
\\
  Z
  \ar[ur]^{p}
  \ar[rr]_{r}
&&
  X
\rlap{.}}
\]
We have:
\begin{enumerate}[label=(\roman*)]
\item if $p$ and $q$ are trivial fibrations, then so is $r$,
\item if $p$ and $r$ are trivial fibrations, then so is $q$,
\item if $q$ and $r$ are trivial fibrations and $p$ is a fibration, then $p$ is a trivial fibration.
\end{enumerate}
\end{lemma}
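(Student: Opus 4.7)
Part (i) is just closure of trivial fibrations, which form the right class of a weak factorization system, under composition. For part (ii), since $p$ is a trivial fibration and every object in $\cal{E}$ is cofibrant, $p$ admits a section $s\co Y \to Z$ with $ps = \id_Y$. The arrows $(s, \id_X)\co q \to r$ and $(p, \id_X)\co r \to q$ in $\cal{E}^\to$ then compose to the identity on $q$, exhibiting $q$ as a retract of $r$ in the arrow category; closure of trivial fibrations under retracts yields that $q$ is a trivial fibration.

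Part (iii) is the main content, and I would prove it by directly verifying that $p$ has the right lifting property against an arbitrary cofibration $m\co A \to B$, in a three-step path-object-style argument. Given maps $a\co A \to Z$ and $b\co B \to Y$ with $pa = bm$, we first use that $r$ is a trivial fibration to lift the composite square $(m, a, qb)$ against $r$, producing $h\co B \to Z$ with $hm = a$ and $qph = qb$. The two maps $ph, b\co B \to Y$ therefore agree over $X$ and coincide on $A$. Second, since $i^1 \hatotimes m$ is a cofibration by \cref{boundary-inclusion} and $q$ is a trivial fibration, we can lift to obtain a homotopy $H\co I \otimes B \to Y$ whose endpoints are $ph$ and $b$, constant at $bm$ on $I \otimes A$, and constant at $qb$ over $X$ via $\epsilon$. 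Third, since $\delta_0 \hatotimes m$ is a trivial cofibration (by saturation from the generators $\cal{J}$) and $p$ is a fibration, we can lift $H$ along $p$ to $\tilde{H}\co I \otimes B \to Z$ starting at $h$ on $\{\delta_0\} \otimes B$ and constant at $a$ on $I \otimes A$. The required filler is then $\ell \defeq \tilde{H} \cdot (\delta_1 \otimes B)\co B \to Z$, which satisfies $\ell m = a$ and $p\ell = b$.

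The only computationally nontrivial step is setting up the correct amalgamated domains for the two Leibniz lifting problems; once these are in place, the compatibility conditions fall out automatically from $pa = bm$ and the naturality of $\epsilon$. Conceptually, the main obstacle is realizing that the failure $ph \neq b$ measured over the trivial fibration $q$ is witnessed by a homotopy in $Y$ that can then, relative to $A$, be lifted along the fibration $p$ to a homotopy in $Z$ connecting $h$ to the sought filler $\ell$.
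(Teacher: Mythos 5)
Your proofs of (i) and (ii) coincide with the paper's. Your proof of (iii) is correct but takes a genuinely different route. The paper exhibits $p$ as a retract in the arrow category of the map $I \obackslash Z \to Y \times_X (I \obackslash X)$, and shows that map is a trivial fibration by decomposing it into three factors: $\delta_1 \hatobackslash p$ (a trivial fibration because $p$ is a fibration), a pullback of $[\delta_0,\delta_1]\hatobackslash q$ (a trivial fibration because $q$ is one), and a pullback of $r$. You instead solve an arbitrary lifting problem of a cofibration $m$ against $p$ directly, in three stages: lift against $r$, then against $q$ via the cofibration $i^1\hatotimes m$, then against $p$ via the trivial cofibration $\delta_0\hatotimes m$. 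These are essentially adjoint formulations of one idea — the paper works on the cotensor side $I\obackslash(-)$ with a retract diagram, you work on the tensor side $I\otimes(-)$ with a chain of explicit lifts — and the same three hypotheses enter in the same roles. The retract presentation is more compact and needs no bookkeeping of compatibility conditions; your version is more hands-on and arguably easier to discover. One small point of justification worth tightening: you assert $\delta_0\hatotimes m$ is a trivial cofibration ``by saturation from the generators $\cal{J}$''. Saturation alone does not immediately give this for arbitrary $m\in\C$; the clean justification, which the paper records in \cref{subsection:fibrations}, is that by adjointness $\delta_k\hatotimes m\pitchfork p$ for every fibration $p$ and every $m\in\C$, so $\delta_k\hatotimes m\in\TC=\liftl{\F}$.
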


\begin{proof}
Case (i) is vertical composability of trivial fibrations.
For case (ii), since $p$ is a trivial fibrations, it has a section.
This makes $q$ a retract of $r$ in the arrow category.
Since $r$ is a trivial fibration, so is $q$.
The remainder of the proof will be devoted to the main case~(iii).

The below diagram exhibits $p$ as a retract of $I \obackslash Z \to Y \times_X (I \obackslash X)$ (where we omitted drawing the horizontal composite identities):%
\footnote{Here, the pullback is taken with respect to $\delta_0 \hatobackslash X \co I \obackslash X \to X$ as indicated by the order of symbols in the pullback.
We adopt this convention for the rest of this proof.}
\[
\xymatrix@C+4em{
  Z
  \ar[r]^-{\epsilon \hatobackslash Z}
  \ar[d]^{p}
&
  I \obackslash Z
  \ar[r]^-{\delta_0 \hatobackslash Z}
  \ar[d]^{\angles{p \cc (\delta_0 \hatobackslash Z), I \hatobackslash m}}
&
  Z
  \ar[d]^{p}
\\
  Y
  \ar[r]^-{\angles{\id, (\epsilon \obackslash X) \cc q}}
&
  Y \times_X (I \obackslash X)
  \ar[r]^-{\pi_0}
&
  Y
\rlap{.}}
\]
It will thus suffice to prove the middle map a trivial fibration.
This map decomposes as follows:
\[
\xymatrix{
  I \obackslash Z
  \ar[r]
&
  (I \obackslash Y) \times_Y Z
  \ar[r]
&
  Y \times_X (I \obackslash X) \times_X Z
  \ar[r]
&
  Y \times_X (I \obackslash X)
\rlap{.}}
\]
The first map is the trivial fibration $\delta_1 \hatobackslash p$, using the assumption that $p$ is a fibration.
The second map is a base change of the trivial fibration $\bracks{\delta_0, \delta_1} \hatobackslash q$, using the assumption that $q$ is a trivial fibration.%
\footnote{In fact, going back to case~(ii), it would suffice to assume only that $\bracks{\delta_0, \delta_1} \hatobackslash q$ has a section.}
The third map is a base change of $r$, also assumed a trivial fibration.
\end{proof}

\begin{corollary} \label{trivial-fibration-2-out-of-3}
Trivial fibrations satisfy 2-out-of-3 relative to (\ie, in the subcategory of) fibrations.
\qed
\end{corollary}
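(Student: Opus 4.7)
The corollary is immediate from the preceding lemma; the plan is simply to observe that each of the three cases of 2-out-of-3 in the subcategory of fibrations is directly supplied by one of the three cases of the lemma. So I would set up a composable pair of fibrations $p \colon Z \to Y$ and $q \colon Y \to X$ with composite $r = q \circ p$ (which is automatically a fibration by closure of $\F$ under composition), and then split into the three assumption combinations.

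First, if both $p$ and $q$ are trivial fibrations, then case (i) of the preceding lemma gives that $r$ is a trivial fibration. Second, if $p$ and $r$ are trivial fibrations (and $q$ a fibration), then case (ii) of the lemma gives that $q$ is a trivial fibration — note that case (ii) does not even use the fact that $q$ is a fibration, so nothing extra is needed here. Third, if $q$ and $r$ are trivial fibrations and $p$ is a fibration, then case (iii) of the lemma directly yields that $p$ is a trivial fibration; the hypothesis that $p$ is a fibration is exactly what is given in the ambient subcategory.

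There is essentially no obstacle: the lemma was stated in precisely the form needed, with case (iii) being the only nontrivial one and its proof already carried out. The corollary is a pure repackaging, which is why the author marks it with \qed in the statement itself.
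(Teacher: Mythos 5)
Your proof is correct and is exactly what the paper intends: the corollary is a direct repackaging of the three cases of the preceding lemma, which is why the statement carries a \qed and no separate proof. Your observation that case~(ii) of the lemma does not actually require $q$ to be a fibration is also accurate and matches the lemma's proof by the retract argument.
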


\begin{lemma} \label{span-of-left-maps-over-right-map}
Let $(\cal{L}, \cal{R})$ be a wfs such that $\cal{L}$ is closed under Leibniz application of $i^1 \co \partial I \to I$.
Given a commuting triangle
\[
\xymatrix{
&
  A
  \ar[dl]_{\in \cal{L}}
  \ar[dr]^{\in \cal{L}}
\\
  Y
  \ar[rr]_{\in \cal{R}}
&&
  X
\rlap{,}}
\]
the map $Y \to X$ is a $k$-oriented costrong deformation retract for any $k \in \braces{0, 1}$.
\end{lemma}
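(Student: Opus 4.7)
The plan is to proceed in two lifts. Write $a \co A \to Y$ and $b \co A \to X$ for the two maps in $\cal{L}$, with $r \cc a = b$.

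First, produce the section. The square
\[
\xymatrix{
  A
  \ar[r]^{a}
  \ar[d]_{b}
&
  Y
  \ar[d]^{r}
\\
  X
  \ar@{=}[r]
&
  X
}
\]
commutes by assumption, with left map in $\cal{L}$ and right map in $\cal{R}$; a lift $s \co X \to Y$ satisfies $s \cc b = a$ and $r \cc s = \id_X$, so $s$ is a section of $r$ compatible with the triangle.

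Second, produce the fibrewise homotopy by a Leibniz lift. Fix $k \in \braces{0,1}$. On $\partial I \otimes Y$ define a map to $Y$ by $[\id_Y, s\cc r]$ (in the order determined by $k$), and on $I \otimes A$ define the constant homotopy $a \cc (\epsilon \otimes A)$. These agree on the overlap $\partial I \otimes A$ since both $\id_Y \cc a = a$ and $(s \cc r) \cc a = s \cc b = a$, so they glue to a map $u \co \partial I \otimes Y \cup_{\partial I \otimes A} I \otimes A \to Y$. Now consider
\[
\xymatrix@C+1em{
  \partial I \otimes Y \cup_{\partial I \otimes A} I \otimes A
  \ar[r]^-{u}
  \ar[d]_{i^1 \hatotimes a}
&
  Y
  \ar[d]^{r}
\\
  I \otimes Y
  \ar[r]_-{r \cc (\epsilon \otimes Y)}
&
  X
\rlap{.}}
\]
Commutativity on the $\partial I \otimes Y$ summand is $r \cc [\id_Y, s \cc r] = [r, r] = r \cc (\epsilon \otimes Y) \cc (i^1 \otimes Y)$, and on the $I \otimes A$ summand it is $r \cc a \cc (\epsilon \otimes A) = b \cc (\epsilon \otimes A) = r \cc (\epsilon \otimes Y) \cc (I \otimes a)$ by naturality of $\epsilon$. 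The left map lies in $\cal{L}$ by the hypothesis that $\cal{L}$ is closed under Leibniz application of $i^1$, and the right map is in $\cal{R}$, so a lift $H \co I \otimes Y \to Y$ exists.

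By construction $H$ is a homotopy between $\id_Y$ and $s \cc r$ in the $k$-orientation, with $r \cc H = r \cc (\epsilon \otimes Y)$ (so $H$ lives over $X$) and $H \cc (I \otimes a) = a \cc (\epsilon \otimes A)$ (so it is stationary on $A$); this is exactly the data of a $k$-oriented costrong deformation retract structure on $r$. The only subtle point is correctly arranging the boundary data so that $i^1 \hatotimes a$ appears as the left map of the Leibniz lift; the closure assumption on $\cal{L}$ under $i^1 \hatotimes (-)$ then does the heavy lifting, and both verifications that the outer square commutes are routine once the endpoints are chosen compatibly.
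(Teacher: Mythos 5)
Your proof is correct and follows exactly the two-lift strategy the paper sketches: the first lifting problem (of $b \in \cal{L}$ against $r \in \cal{R}$, with $a$ on top) produces the section $s$, and the second lifting problem of $i^1 \hatotimes a \in \cal{L}$ against $r$, with the glued boundary data $[\id_Y, s\cc r]$ and the constant homotopy on $A$, produces the relative homotopy. The commutativity verifications and the appeal to the closure hypothesis are all as the paper intends; you have simply expanded the paper's "standard reasoning" into full detail.
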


\begin{proof}
This is standard reasoning, using two $(\cal{L}, \cal{R})$-lifting problems against the map $Y \to X$.
Lifting $A \to X$ constructs the section.
Then lifting the Leibniz application of $\partial I \to I$ to $A \to Y$ constructs the needed relative homotopy.
\end{proof}

\begin{lemma} \label{strong-codef-retract-iff-trivial}
Let $p \co Y \to X$ be a fibration.
Then there are functors in all directions between:
\begin{enumerate}[label=(\roman*)]
\item $p$ is a trivial fibration,
\item one of:
  \begin{enumerate}[label=(\alph*)]
  \item $p$ is a (left or right) strong codeformation retract,
  \item $p$ is a (left or right) strong homotopy equivalence.
  \end{enumerate}
\end{enumerate}
\end{lemma}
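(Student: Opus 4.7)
The plan is to establish explicit constructions between each pair among (i), (ii)(a), (ii)(b); the direction (ii)(a) $\Rightarrow$ (ii)(b) is essentially immediate (take $H_Y$ to be the relative homotopy, and use the constant homotopy $\epsilon_X$ as witness for $ps = \id_X$), so the real content lies in (i) $\Leftrightarrow$ (ii)(a) and in the strictification (ii)(b) $\Rightarrow$ (ii)(a) that uses the fibration property. The arguments for the left- and right-oriented variants are symmetric under swapping $\delta_0$ and $\delta_1$, so I treat only the left orientation.

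For (i) $\Rightarrow$ (ii)(a), I would first lift $\id_X$ along the trivial fibration $p$ to obtain a section $s$. By \cref{boundary-inclusion}, the map $i^1 \otimes Y \co \partial I \otimes Y \to I \otimes Y$, which agrees with $i^1 \hatotimes \bot_Y$, is a cofibration. Solving the lifting problem of this cofibration against $p$ with top data $[sp, \id_Y]$ and bottom data $p \cc \epsilon_Y$ yields a homotopy $H \co I \otimes Y \to Y$ satisfying $H \delta_0 = sp$, $H \delta_1 = \id_Y$, and $pH = p \epsilon_Y$, exhibiting $p$ as a left strong codeformation retract.

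For (ii)(a) $\Rightarrow$ (i), given a cofibration $m \co A \to B$ and a lifting problem $(u, v) \co m \to p$, I would assemble data on the domain of the Leibniz cofibration $\delta_0 \hatotimes m$: on the $B$-summand the map $sv$, and on the $I \otimes A$-summand the composite $H \cc (I \otimes u)$. These agree on the common $A$-piece since $svm = spu$. Post-composition with $p$ matches the restriction of $v \cc \epsilon_B \co I \otimes B \to X$: this uses $psv = v$ and, crucially, the relative condition $pH = p \epsilon_Y$ to give $pH \cc (I \otimes u) = pu \epsilon_A = vm \epsilon_A$. Since $\delta_0 \hatotimes m \in \cal{J}$ (see~\eqref{gen-triv-cof}) is a generating trivial cofibration, the fibration $p$ provides a lift $\tilde H \co I \otimes B \to Y$, and $\tilde H \cc \delta_1$ is the desired lift of $(u, v)$.

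The main obstacle is (ii)(b) $\Rightarrow$ (ii)(a). Starting from strong homotopy equivalence data $(s, H_X, H_Y)$, I would first replace $s$ by a strict section: the trivial cofibration $\delta_0 \otimes X = \delta_0 \hatotimes \bot_X$ lifts against the fibration $p$, so $H_X \co I \otimes X \to X$ can be lifted along $p$ starting at $s$, producing $s' \co X \to Y$ with $ps' = \id_X$. The delicate step is then to promote the induced non-relative homotopy from $s'p$ to $\id_Y$ into one that is relative to $X$. This requires a further lift against $p$: the connections $c^0, c^1$ allow the self-homotopy $pH_Y \co I \otimes Y \to X$ of $p$ to be folded into a square $I \otimes I \otimes Y \to X$ with three faces constant at $p$ and one face equal to $pH_Y$; lifting this square against $p$, using assumption~\ref{boundary-fibration} applied to the resulting Leibniz cofibration, yields a double homotopy whose face with constant $p$-projection supplies the required relative homotopy. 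Managing the coherence of the boundary data in this last step, in particular matching the endpoints of the connections with the endpoints of $H_Y$ and the section $s'$, is the most technical aspect of the proof.
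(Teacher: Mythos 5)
Your overall structure is (i) $\Leftrightarrow$ (ii.a), (ii.a) $\Rightarrow$ (ii.b), (ii.b) $\Rightarrow$ (ii.a), whereas the paper closes a single three-step cycle (i) $\Rightarrow$ (ii.a) $\Rightarrow$ (ii.b) $\Rightarrow$ (i). Your (i) $\Rightarrow$ (ii.a) matches the paper (it is \cref{span-of-left-maps-over-right-map} specialized to $A = 0$), and your extra (ii.a) $\Rightarrow$ (i) via lifting against $\delta_0 \hatotimes m$ is correct but redundant.

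The real issue is that you leave (ii.b) $\Rightarrow$ (ii.a) as a sketch and explicitly flag it as the hardest step, so the cycle never closes: nothing you establish follows from (ii.b). The strictification you describe --- first correcting the section by lifting, then using connections to fold a double homotopy and extract a relative one --- can in principle be made to work, but it requires exactly the kind of fiddly endpoint bookkeeping you mention, and it is not necessary. The paper avoids it entirely with a retract argument for (ii.b) $\Rightarrow$ (i): the data of a $k$-oriented strong homotopy equivalence structure on $p$ is precisely a section of $\theta^k \hatobackslash p \co \delta_k \hatobackslash p \to p$ in the arrow category, hence exhibits $p$ as a retract of $\delta_k \hatobackslash p$; since $p$ is a fibration, $\delta_k \hatobackslash p$ is a trivial fibration by \cref{pushout-application}, and retracts of trivial fibrations are trivial fibrations. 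This single observation replaces the entire strictification step. If you want to complete your proposal as written, you must carry out the double-homotopy construction in full; the cleaner fix is to replace (ii.b) $\Rightarrow$ (ii.a) by the retract argument (ii.b) $\Rightarrow$ (i), at which point your extra (ii.a) $\Rightarrow$ (i) direction also becomes unnecessary.
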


\begin{proof}
Fix $k \in \braces{0, 1}$, considering only $k$-oriented data in~(ii).
This is justified by~(i) being independent of $k$.
For the direction from~(ii.a) to~(ii.b), note that strong codeformation retracts are special cases of strong homotopy equivalences.
For the direction from~(ii.b) to~(i), note that $p$ being a $k$-oriented strong homotopy equivalence exhibits $p$ as a retract of $\delta_k \hatobackslash p$, a trivial fibration.
For the direction from~(i) to~(ii.a), we apply \cref{span-of-left-maps-over-right-map} with $(\cal{L}, \cal{R}) \defeq (\C, \TF)$ and $A \defeq 0$, using that every object is cofibrant.
\end{proof}

\begin{corollary} \label{span-property-holds}
The wfs's $(\C, \TF)$ and $(\TC, \F)$ satisfy the span property of \cref{span-property}.  
\end{corollary}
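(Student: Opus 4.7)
The plan is to assemble the two preceding results \cref{span-of-left-maps-over-right-map,strong-codef-retract-iff-trivial} directly. The first produces, from a span of $\cal{L}$-maps over an $\cal{R}$-map in a wfs $(\cal{L}, \cal{R})$ with $\cal{L}$ closed under Leibniz application of $i^1 \co \partial I \to I$, a strong codeformation retract structure on the bottom map $Y \to X$. The second then converts such a structure on a fibration into the statement that the fibration is a trivial fibration.

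Concretely, for the wfs $(\TC, \F)$---the case carrying the real content---I would apply \cref{span-of-left-maps-over-right-map} with $(\cal{L}, \cal{R}) \defeq (\TC, \F)$. Its hypothesis, that $\TC$ be closed under Leibniz application of $i^1$, is exactly assumption~\ref{boundary-fibration}. Given a triangle with two trivial cofibrations $A \trivcof Y$, $A \trivcof X$ over a fibration $Y \fib X$, the lemma thus produces a (say, $0$-oriented) strong codeformation retract structure on $Y \to X$. Since $Y \to X$ is a fibration, \cref{strong-codef-retract-iff-trivial} then yields that it is a trivial fibration.

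For the wfs $(\C, \TF)$, the identical recipe applies: closure of $\C$ under Leibniz application of $i^1$ is furnished by \cref{boundary-inclusion}, and the bottom map lies in $\TF \subseteq \F$, hence is a fibration to which \cref{strong-codef-retract-iff-trivial} still applies. Of course, for this wfs the conclusion is already given among the hypotheses, so the argument is vacuous. There is no real obstacle in this proof; the only subtlety is recognizing that assumption~\ref{boundary-fibration} was introduced precisely so as to feed \cref{span-of-left-maps-over-right-map} with the left class $\TC$.
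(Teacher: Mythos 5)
Your argument is exactly the paper's: apply \cref{span-of-left-maps-over-right-map} with $(\cal{L},\cal{R}) = (\TC,\F)$, whose Leibniz-closure hypothesis is assumption~\ref{boundary-fibration}, and then convert the resulting strong codeformation retract structure into triviality of the fibration via the (ii.a)$\Rightarrow$(i) direction of \cref{strong-codef-retract-iff-trivial}. The only slight wrinkle is your reading of the corollary as two separate claims about the two wfs's; as phrased in \cref{span-property}, the span property is a single statement involving trivial cofibrations (from $\TC$), a fibration (from $\F$), and a trivial fibration (from $\TF$), so there is just the one case you rightly identify as ``carrying the real content.''
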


\begin{proof}
This is \cref{span-of-left-maps-over-right-map} with $(\cal{L}, \cal{R}) \defeq (\TC, \F)$, combined with the direction from~(ii.a) to~(i) of \cref{strong-codef-retract-iff-trivial}.
\end{proof}

\begin{lemma} \label{homotopies-groupoidal}
Homotopies between maps into a fibrant object admit finitary composition and inversion operations with the expected laws satisfied up to homotopy.
\end{lemma}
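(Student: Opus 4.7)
The plan is to derive every construction and every law from a single technique: solving a lifting problem against the fibration $X \to \terminal$, available since $X$ is fibrant. The identity homotopy on $f \co A \to X$ is defined as $\id_f \defeq f \cc (\epsilon \otimes A) \co I \otimes A \to X$. For composition of $H \co f \sim g$ and $K \co g \sim h$, I would populate three faces of $I \otimes I \otimes A$: the constant homotopy $\id_f$ on $\delta_0 \otimes I \otimes A$, the input $K$ on $\delta_1 \otimes I \otimes A$, and $H$ on $I \otimes \delta_0 \otimes A$. These agree at all corners and assemble into a map out of the subobject $(\partial I \otimes I \otimes A) \cup (I \otimes A) \hookrightarrow I \otimes I \otimes A$, which is the domain of $i^1 \hatotimes (\delta_0 \otimes A)$. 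Since $\bot_A \in \C$, the map $\delta_0 \otimes A = \delta_0 \hatotimes \bot_A$ lies in $\TC$ by the definition of $\cal{J}$ and cellular closure, and applying $i^1 \hatotimes (-)$ preserves $\TC$ by assumption~\ref{boundary-fibration}. Lifting against $X \to \terminal$ produces a map $I \otimes I \otimes A \to X$ whose restriction along $I \otimes \delta_1 \otimes A$ is a composite homotopy $f \sim h$.

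For inversion, an analogous filling with $H$ on one face of the square and $\id_f$ on two perpendicular faces produces $H^{-1} \co g \sim f$. The remaining laws—associativity, the left and right unit laws, and the left and right inverse laws—are each witnessed by a filling performed one dimension higher, against an inclusion of the form $i^1 \hatotimes i^1 \hatotimes (\delta_0 \otimes A)$ (or a further iteration), which remains in $\TC$ by iterating~\ref{boundary-fibration}. The boundary of the $3$-cube $I \otimes I \otimes I \otimes A$ is populated by the input homotopies, the previously constructed composites, and suitable constant homotopies, arranged so that a designated face of the filling realizes the claimed homotopy.

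The main obstacle is purely combinatorial: for each law, one must arrange the boundary data on the faces of a $3$-cube so that they agree coherently on lower-dimensional intersections and so that the designated face carries the intended homotopy. The lifting itself is automatic, because every relevant inclusion—being a Leibniz product of the form $i^1 \hatotimes \cdots \hatotimes i^1 \hatotimes (\delta_0 \otimes A)$—lies in $\TC$ by iteration of~\ref{boundary-fibration} together with the basic Leibniz calculus recorded in~\cref{cofib-leibniz}.
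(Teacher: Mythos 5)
Your argument is correct and fills in precisely the lifting argument that the paper's one-line proof (``Standard, using the structure of the functorial cylinder'') gestures at: composition and inversion, and the laws governing them, all arise from lifting boundary data on cubes against the fibrant $X$, with the relevant inclusions recognized as trivial cofibrations by Leibniz calculus, and your face assignments for the binary composition square do check out at all corners. One small observation: the use of assumption~\ref{boundary-fibration} is avoidable for this particular lemma, since one can lift instead against $\delta_k \hatotimes (i^1 \otimes A)$ and its higher iterates $\delta_k \hatotimes (i^n \otimes A)$ (a different face assignment, reading off the $\delta_1$-face of the outer $I$), which are trivial cofibrations directly from the definition of $\F$ together with \cref{boundary-inclusion}; either route is of course valid under the paper's standing hypotheses.
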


\begin{proof}
Standard, using the structure of the functorial cylinder.
\end{proof}

\begin{lemma} \label{homotopy-equivalence-2-out-of-3}
In any triangle of maps between fibrant objects commuting up to homotopy, if two of the maps are homotopy equivalences, then so is the third.
\end{lemma}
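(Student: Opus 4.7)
The plan is to reduce the claim to the elementary fact that isomorphisms in any category satisfy 2-out-of-3, by passing to a homotopy category of fibrant objects.

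First I would verify that, whenever $Y$ is fibrant, homotopy is an equivalence relation on $\cal{E}(X, Y)$. Reflexivity is witnessed by the constant homotopy obtained by precomposing a map with $\epsilon \otimes X$; symmetry and transitivity are obtained from the inversion and composition operations on homotopies provided by \cref{homotopies-groupoidal}, whose laws-up-to-homotopy collapse to strict laws once we pass to homotopy classes. Next I would check that homotopy is compatible with both pre- and post-composition by arbitrary maps: a homotopy $H \co I \otimes X \to Y$ is sent to $v \circ H$ by postcomposition with $v \co Y \to Y'$ (whose target must again be fibrant for this to be useful), and to $H \circ (I \otimes u)$ by precomposition with $u \co X' \to X$, using functoriality of $I$. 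It follows that composition descends to a well-defined operation on homotopy classes.

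Restricting to fibrant objects then yields a category $\mathbf{Ho}_f(\cal{E})$ whose morphisms are homotopy classes of maps between fibrant objects, together with a quotient functor from the full subcategory of fibrant objects that is the identity on objects. By construction, a map between fibrant objects is a homotopy equivalence precisely when its image in $\mathbf{Ho}_f(\cal{E})$ is an isomorphism: a homotopy inverse represents an inverse class, and conversely any representative of the inverse class is a homotopy inverse.

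Applying this to the triangle, the relation $h \simeq g \circ f$ descends to the equality $[h] = [g] \circ [f]$ in $\mathbf{Ho}_f(\cal{E})$. The statement then follows from 2-out-of-3 for isomorphisms in any category: if two of $[f], [g], [h]$ are invertible, the third is computed as $[g] \circ [f]$, $[h] \circ [f]^{-1}$, or $[g]^{-1} \circ [h]$, respectively. The main point requiring care is the compatibility of composition with homotopy, which is precisely where fibrancy of the intermediate and final objects enters via \cref{homotopies-groupoidal}; everything else is bookkeeping.
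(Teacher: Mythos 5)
Your proof is correct and takes essentially the same route the paper intends: the paper's own proof just says ``Standard, using \cref{homotopies-groupoidal},'' and your argument is the standard one this points to, namely passing to the homotopy category of fibrant objects via the groupoidal structure of homotopies and invoking 2-out-of-3 for isomorphisms. You have simply spelled out the details the paper leaves implicit.
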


\begin{proof}
Standard, using \cref{homotopies-groupoidal}.
\end{proof}

\begin{lemma} \label{homotopy-equivalences-invariant}
For any fibrant $X \in \cal{E}$, the forgetful functor $\cal{E}_{/X} \to \cal{E}$ creates homotopy equivalences between fibrant objects.
\end{lemma}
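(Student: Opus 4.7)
Plan:

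The forgetful functor $U \co \cal{E}_{/X} \to \cal{E}$ preserves the functorial cylinder by \cref{cylinder-slicing}, hence preserves homotopies and homotopy equivalences, handling one direction of ``creates''. For the reflection direction, suppose $f \co (Y, p) \to (Z, q)$ is a morphism in $\cal{E}_{/X}$ with $p, q \in \F$ such that $f$ is a homotopy equivalence in $\cal{E}$. The plan is to factor $f$ in the slice, show each factor is a homotopy equivalence there, and then conclude by composition (using \cref{homotopies-groupoidal}).

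Factor $f$ in $\cal{E}_{/X}$ via $(\TC_{/X}, \F_{/X})$ as $i \co Y \trivcof W$ followed by $r \co W \fib Z$. Then $W$ is fibrant in the slice via the composite $qr$, and all the ambient structure transfers to $\cal{E}_{/X}$ by \cref{cylinder-slicing,suitable-wfs-slicing}; the assumptions \ref{boundary-fibration} and \ref{local} also pass to the slice, since fibrations and trivial cofibrations there are created by $U$. Moreover, since $X$ is fibrant and $p, q, qr$ are fibrations, the objects $Y$, $W$, $Z$ are all fibrant in $\cal{E}$ as well.

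For the factor $i$: a strict retraction $\rho \co W \to Y$ in the slice arises by lifting $\id_Y$ along $i$ against the fibration $W \to X$ (the terminal in the slice), and a relative homotopy $H \co i\rho \sim \id_W$ constant on the image of $Y$ arises by lifting against $W \to X$ along the Leibniz product $i^1 \hatotimes i$---which is a trivial cofibration by \ref{boundary-fibration}. This exhibits $i$ as a strong deformation retract, hence a homotopy equivalence, in $\cal{E}_{/X}$. For the factor $r$: by \cref{homotopy-equivalence-2-out-of-3} applied to $Y \xrightarrow{i} W \xrightarrow{r} Z$ in $\cal{E}$, $r$ is a homotopy equivalence in $\cal{E}$. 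Rectify a plain homotopy inverse of $r$ by first using $\delta_0 \hatobackslash r \in \TF$ to lift the homotopy $rs \sim \id_Z$ to produce a strict section $s'$ of $r$; then use a two-dimensional lifting against $r$ along a horn-shaped Leibniz product of the form $i^1 \hatotimes \delta_k$ (a trivial cofibration by \ref{boundary-fibration}) to straighten the accompanying homotopy into a ``strong'' one. This exhibits $r$ as a strong codeformation retract in $\cal{E}$; by \cref{strong-codef-retract-iff-trivial}, $r$ is a trivial fibration in $\cal{E}$, hence also in the slice. A second application of \cref{strong-codef-retract-iff-trivial} in the slice then yields $r$ as a strong codeformation retract, hence a homotopy equivalence, in $\cal{E}_{/X}$.

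The main obstacle is the two-dimensional lifting step used for $r$. Concretely, the naively rectified homotopy $J \co s'r \sim \id_W$ maps under $r$ to a loop $rJ$ at $r$ rather than to the constant homotopy $r \cc (\epsilon \otimes W)$. One must fill a 2-cube in $W$ whose boundary records $J$ on one face, the endpoints $s'r, \id_W$ on two others, and whose $r$-image is a canonical null-homotopy of $rJ$ built from the connection structure $c^0, c^1$ of the cylinder. The relevant horn inclusion, being a Leibniz product of $i^1$ with a $\delta_k$-component, is a trivial cofibration by \ref{boundary-fibration}, so the lift against $r$ exists.
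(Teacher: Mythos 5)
Your proof takes a genuinely different route from the paper's, which simply appeals to~\cite{shulman:inverse-diagrams} (the fibrant fragments of $\cal{E}$ and its slices are type-theoretic fibration categories, and invariance of homotopy equivalence under the forgetful functor is a standard structural fact there). Your argument is self-contained, and you correctly steer clear of \cref{fibrant-and-homotopy-equivalence-is-trivial-fibration} and \cref{equivalence-variations}, both of which in the paper depend on the lemma being proved. The preservation direction and the treatment of the first factor $i$ are correct: lifting $\id_Y$ against the fibration $Y \to X$ produces the retraction $\rho$, and lifting against $W \to X$ along $i^1 \hatotimes i$ (a trivial cofibration by~\ref{boundary-fibration}) produces the fiberwise deformation, exhibiting $i$ as a homotopy equivalence over $X$. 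The later bookkeeping (2-out-of-3 for homotopy equivalences, two uses of \cref{strong-codef-retract-iff-trivial}, and composition via \cref{homotopies-groupoidal}) is also fine.

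The gap is in the rectification step for $r$. You want a vertical homotopy $J' \co s'r \sim \id_W$, and you propose to obtain it by one two-dimensional lift against $r$, prescribing a three-face horn (the degenerate homotopies at $s'r$, $\id_W$, and $J$ on a third face) over a ``canonical null-homotopy of $rJ$ built from the connections''. But the connections do not produce such a null-homotopy. Each of $rJ \cc (c^0 \otimes W)$ and $rJ \cc (c^1 \otimes W)$ carries $rJ$ on \emph{two} of the four boundary faces (the two $\delta_1$-faces for $c^0$, the two $\delta_0$-faces for $c^1$) and the constant $r\epsilon$ on the other two; there is no connection-built $2$-cube with $rJ$ on exactly one face and $r\epsilon$ on the remaining three. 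Running through the four ways of matching your horn against these $2$-cubes, the lifted free face either has the correct endpoints $s'r$ and $\id_W$ but its $r$-image is $rJ$ rather than $r\epsilon$, or it is vertical but is a loop at $s'r$ (resp.\ $\id_W$) rather than a homotopy $s'r \sim \id_W$. So a single such lift does not produce what you claim. The statement you are after (a fibration between fibrant objects that is a homotopy equivalence is a trivial fibration; this is Brown's lemma) is true, but the argument requires more: either an extra layer of lifting combined with the groupoidal composition/inversion of homotopies from \cref{homotopies-groupoidal}, or a filling of a non-connection horn in $Z$ using fibrancy of $Z$ together with a subsequent correction. As written, the proof has a hole in precisely the step you flagged as the main obstacle.
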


\begin{proof}
Standard, see~\cite{shulman:inverse-diagrams}.
Note that $\cal{E}$ or its slices, when restricted to fibrant objects, form in particular a type-theoretic fibration category as considered in \ibid.
\end{proof}

\begin{lemma} \label{fibrant-and-homotopy-equivalence-is-trivially-fibrant}
Let $X \in \cal{E}$ be fibrant such that $X \to 1$ is a homotopy equivalence.
Then $X$ is trivially fibrant.
\end{lemma}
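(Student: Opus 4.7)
The plan is to apply \cref{strong-codef-retract-iff-trivial} to the map $p \co X \to 1$, which is a fibration because $X$ is fibrant. That lemma reduces the conclusion that $p$ is a trivial fibration to exhibiting $p$ as a (left or right) strong homotopy equivalence; I fix an orientation $k \in \braces{0, 1}$.

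The hypothesis that $p$ is a homotopy equivalence provides a map $s \co 1 \to X$ together with a homotopy $H \co I \otimes X \to X$ from $sp$ to $\id_X$ (in the chosen orientation $k$). To promote $(s, H)$ to the data of a $k$-oriented strong homotopy equivalence in the sense of~\cite{gambino-sattler:frobenius}, it remains to verify the two strictness conditions $p s = \id_1$ and $p H = p \cc \epsilon_X$, where $\epsilon_X \co I \otimes X \to X$ is the contraction. Both equations are automatic, since their common codomain is the terminal object~$1$.

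Hence $p$ is a $k$-oriented strong homotopy equivalence, and the conclusion follows from the implication (ii.b) $\Rightarrow$ (i) of \cref{strong-codef-retract-iff-trivial}. There is no real obstacle here: the strictness conditions that distinguish a strong homotopy equivalence from a plain homotopy equivalence all land in $1$ and thus trivialize automatically, so the ordinary homotopy equivalence data suffices once we know $p$ is a fibration.
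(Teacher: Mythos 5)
Your proof is correct and takes essentially the same approach as the paper: both observe that the strictness conditions distinguishing a strong homotopy equivalence from a plain one trivialize because the codomain is the terminal object, and then both conclude via the retract-of-$\delta_k \hatobackslash p$ argument (you invoke it through \cref{strong-codef-retract-iff-trivial}; the paper inlines it).
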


\begin{proof}
Note that $X \to 1$ is automatically also a (say, $0$-oriented) strong homotopy equivalence.
It follows that $X \to 1$ is a retract of $\delta_0 \hatobackslash \top_X$ and hence a trivial fibration.
\end{proof}

\begin{corollary} \label{fibrant-and-homotopy-equivalence-is-trivial-fibration}
A fibration between fibrant objects is a homotopy equivalence if and only if it is a trivial fibration.
\end{corollary}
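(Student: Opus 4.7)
The plan is to handle the two directions separately, with the \emph{if} direction being essentially immediate.

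For the \emph{if} direction, assume $p \co Y \to X$ is a trivial fibration. Then by \cref{strong-codef-retract-iff-trivial} (direction (i) to (ii.a) to (ii.b)), $p$ is a strong homotopy equivalence, hence in particular a homotopy equivalence. No fibrancy hypotheses are needed here.

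For the \emph{only if} direction, assume $p$ is both a fibration between fibrant objects and a homotopy equivalence; we want to upgrade to $p \in \TF$. The plan is to slice over $X$ and apply \cref{fibrant-and-homotopy-equivalence-is-trivially-fibrant} in $\cal{E}_{/X}$. By \cref{cylinder-slicing} and \cref{suitable-wfs-slicing}, the slice $\cal{E}_{/X}$ inherits a suitable functorial cylinder and a suitable wfs $(\C_{/X}, \TF_{/X})$, whose classes are created by the forgetful functor; with our convention from \cref{fib-not-stable-under-slicing}, fibrations in the slice correspond to $\F_{/X}$ and thus also translate back to $\F$ under forgetful. Viewed in the slice, the object $(Y, p)$ is fibrant, because its structure map to the terminal object $(X, \id_X)$ is $p$ itself, which is in $\F$ by assumption; and $(X, \id_X)$ is of course terminal and fibrant. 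Since $X$ is fibrant in $\cal{E}$, \cref{homotopy-equivalences-invariant} tells us the forgetful functor $\cal{E}_{/X} \to \cal{E}$ creates homotopy equivalences between fibrant objects, so $p$ being a homotopy equivalence in $\cal{E}$ upgrades it to a homotopy equivalence $(Y, p) \to (X, \id_X)$ inside $\cal{E}_{/X}$.

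At this point \cref{fibrant-and-homotopy-equivalence-is-trivially-fibrant}, applied inside $\cal{E}_{/X}$, concludes that $(Y, p)$ is trivially fibrant in the slice, which means $p \in \TF_{/X}$, i.e.\ $p$ is a trivial fibration in $\cal{E}$. The argument of that lemma is purely in terms of the wfs and the structure of the functorial cylinder, and does not invoke the extra assumptions \ref{boundary-fibration} or \ref{local}, so it transfers verbatim to the slice.

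The main obstacle is the bookkeeping around slicing, namely making sure that the homotopy equivalence, fibrancy, and trivial fibrancy one manipulates in $\cal{E}_{/X}$ genuinely correspond to the notions in $\cal{E}$ and not to some alternative notion such as the $\F'$ of \cref{fib-not-stable-under-slicing}. This is settled by the cited slicing lemmata together with the convention fixed there.
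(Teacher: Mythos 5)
Your proof is correct and follows essentially the same route as the paper: the ``if'' direction via \cref{strong-codef-retract-iff-trivial}, and the ``only if'' direction by combining \cref{homotopy-equivalences-invariant} with \cref{fibrant-and-homotopy-equivalence-is-trivially-fibrant}. The paper states this very tersely, and your elaboration of the slicing bookkeeping (including the remark that $\F_{/X} \subseteq \F'$ suffices to make the cited lemma's proof go through in the slice) is a faithful unfolding of what is implicit there.
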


\begin{proof}
For the forward direction, combine \cref{homotopy-equivalences-invariant,fibrant-and-homotopy-equivalence-is-trivially-fibrant}.
For the reverse direction, apply \cref{strong-codef-retract-iff-trivial}.
Alternatively, one may infer this from the results in~\cite{shulman:inverse-diagrams}.
\end{proof}

\subsection{Path objects}
\label{path-objects}

Consider the factorization
\[
\xymatrix@C+1em{
  \partial I
  \ar[r]^-{i^1}
&
  I
  \ar[r]^-{\epsilon}
&
  \Id
}
\]
of the codiagonal.
The first map is valued in cofibrations by \cref{boundary-inclusion}.
Its two components form sections to the section map and are valued in trivial cofibrations.
Right adjoint application to any object $X \in \cal{E}$ produces the \emph{path object factorization}
\[
\xymatrix@C+3em{
  X
  \ar[r]^-{\epsilon \obackslash X}
&
  I \obackslash X
  \ar[r]^-{\angles{\delta_0 \obackslash X, \delta_1 \obackslash X}}
&
  X \times X
}
\]
of the diagonal at $X$.
The first map is called \emph{reflexivity map}.
The second map is called \emph{boundary projection}.
Its components are called \emph{endpoint projections} and form retractions of the reflexivity map.
If $X$ is fibrant, then the boundary projection is a fibration by \ref{boundary-fibration} and the endpoint projections are trivial fibrations.

We will frequently use these notions in a relative setting, \ie in a slice category $\cal{E}_{/Y}$ for $Y \in \cal{E}$, giving rise to a path object factorization for any map $X \to Y$.
Note that the construction is functorial and stable under change of base.


\subsection{Mapping cocylinder}

Let $f \co X_0 \to X_1$ be a map between fibrant objects.
By pulling back $\delta_0 \obackslash X_1 \co I \obackslash X_1 \to X_1$ along $f$ as shown below:
\[
\xymatrix@!C{
&
  X_0
  \ar[r]^-{f}
  \ar[d]
  \ar[ddl]_{f}
  \fancypullback{[d]}{[r]}{[dr]}[0.25cm][0.5]
&
  X_1
  \ar[d]^{\epsilon \obackslash X_1}
\\&
  M f
  \ar[r]
  \ar[d]
  \fancypullback{[d]}{[r]}{[dr]}[0.25cm][0.5]
&
  I \obackslash X_1
  \ar[d]^{i^1 \obackslash X_1}
\\
  X_1
&
  X_0 \times X_1
  \ar[r]
  \ar[d]
  \ar[l]^{\pi_1}
  \fancypullback{[d]}{[r]}{[dr]}[0.25cm][0.5]
&
  X_1 \times X_1
  \ar[d]^{\pi_0}
\\&
  X_0
  \ar[r]_-{f}
&
  X_1
\rlap{,}}
\]
we construct the \emph{mapping cocylinder factorization}
\[
\xymatrix{
  X_0
  \ar[r]^-{j}
&
  M f
  \ar[r]^-{e}
&
  X_1
}
\]
of $f$ where the second map is the composition of $M f \to X_0 \times X_1$ followed by $\pi_1$.

Intuitively, the fibers of $M f \to X_1$ are the homotopy fibers of $f \co X_0 \to X_1$.
As a base change of $I \obackslash X_1 \to X_1$, note that $f^* d_0 \co M f \to X_0$ inherits the structure of a strong codeformation retract with section $j$.
As a composition of $M f \to X_0 \times X_1$, which is a base change of $i^1 \obackslash X_1$, and $\pi_1 \co X_0 \times X_1 \to X_1$, which is a base change of $X_0 \to 1$, note that $e \co M f \to X_1$ is a fibration.
As with the path object, the mapping cocylinder factorization is functorial and stable under change of base.

\subsection{Equivalences}

Continuing the setting of the previous subsection, we call $f \co X_0 \to X_1$ an \emph{equivalence} if the second map $M f \to X_1$ of its mapping cocylinder factorization is a trivial fibration.

\begin{lemma} \label{fibration-equivalence-same-as-trivial-fibration}
A fibration $f \co X_0 \to X_1$ between fibrant objects is an equivalence if and only if it is a trivial fibration.
\end{lemma}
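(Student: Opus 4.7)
The plan is to reduce both directions to \cref{fibrant-and-homotopy-equivalence-is-trivial-fibration} via a single application of 2-out-of-3 for homotopy equivalences along the triangle $f = e \circ j$.

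First I would observe that everything in sight is fibrant: $X_1$ is fibrant by assumption, $I \obackslash X_1$ is fibrant because $X_1$ is and the map $\delta_0 \obackslash X_1 \co I \obackslash X_1 \to X_1$ is a (trivial) fibration, and $M f$ is fibrant as a pullback of fibrations between fibrant objects. Next, the map $f^* d_0 \co M f \to X_0$ is a base change of $\delta_0 \obackslash X_1$, hence a trivial fibration, and by the reverse direction of \cref{fibrant-and-homotopy-equivalence-is-trivial-fibration} it is a homotopy equivalence between fibrant objects. Since $j$ is a section of $f^* d_0$, applying \cref{homotopy-equivalence-2-out-of-3} to the identity triangle $X_0 \xrightarrow{j} M f \xrightarrow{f^* d_0} X_0$ shows that $j$ is itself a homotopy equivalence. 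Finally, $e \co M f \to X_1$ is a fibration by the discussion following the definition of the mapping cocylinder.

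Now consider the triangle of maps between fibrant objects given by $f = e \circ j$. For the forward direction, assume $f$ is an equivalence, so that $e$ is a trivial fibration and in particular a homotopy equivalence (by \cref{strong-codef-retract-iff-trivial}). By \cref{homotopy-equivalence-2-out-of-3}, $f$ is a homotopy equivalence; since it is additionally a fibration between fibrant objects, \cref{fibrant-and-homotopy-equivalence-is-trivial-fibration} yields that $f$ is a trivial fibration. For the reverse direction, assume $f$ is a trivial fibration, hence a homotopy equivalence. Applying \cref{homotopy-equivalence-2-out-of-3} to the same triangle, $e$ is a homotopy equivalence, and since $e$ is a fibration between fibrant objects, \cref{fibrant-and-homotopy-equivalence-is-trivial-fibration} yields that $e$ is a trivial fibration, which is exactly what it means for $f$ to be an equivalence.

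I do not anticipate a substantial obstacle here: the proof is essentially a bookkeeping exercise once one notes the tautological section $j$ of the trivial fibration $f^* d_0$. The only subtle point to check carefully is that $M f$ is fibrant, so that \cref{fibrant-and-homotopy-equivalence-is-trivial-fibration,homotopy-equivalence-2-out-of-3} apply to all maps involved, and this in turn depends on $\delta_0 \obackslash X_1$ being a fibration — which is one of the consequences of assumption~\ref{boundary-fibration} recorded in \cref{path-objects}.
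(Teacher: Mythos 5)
Your proof is correct, but it takes a genuinely different route from the paper's. The paper argues directly with $\TF$-level 2-out-of-3 (Corollary~\ref{trivial-fibration-2-out-of-3}): it builds the commuting square
\[
\xymatrix@C+1em{
  I \obackslash X_0
  \ar@{->>}[r]^-{\triv}
  \ar@{->>}[d]_{\triv}
&
  X_0
  \ar@{->>}[d]^{f}
\\
  M f
  \ar@{->>}[r]_{e}
&
  X_1
}
\]
in which the top map $\delta_1 \obackslash X_0$ is a trivial fibration because $X_0$ is fibrant and the left map $\delta_0 \hatobackslash f$ is a trivial fibration because $f$ is a fibration, and then compares the two legs of the diagonal using \cref{trivial-fibration-2-out-of-3} to conclude that the bottom map $e$ is a trivial fibration exactly when the right map $f$ is. This never mentions homotopy equivalences at all.

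Your proof instead passes through the homotopy-equivalence machinery: you first upgrade the trivial fibration $f^* d_0$ to a homotopy equivalence, deduce that its section $j$ is a homotopy equivalence, and then run 2-out-of-3 for homotopy equivalences (\cref{homotopy-equivalence-2-out-of-3}) on the triangle $f = e \cc j$, converting back to trivial fibrations at the end via \cref{fibrant-and-homotopy-equivalence-is-trivial-fibration}. This is sound: all the objects in sight ($X_0$, $X_1$, $M f$) are fibrant, as you verify, so every invocation is licensed, and there is no circularity with the material you cite. The tradeoff is that your version leans on the homotopy-equivalence lemmata (which in the paper's organization become the workhorse for the \emph{next} result, \cref{equivalence-variations}), whereas the paper's argument is more economical here, needing only the square above and the purely fibration-level 2-out-of-3 from \cref{trivial-fibration-2-out-of-3}. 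In effect you have front-loaded a piece of \cref{equivalence-variations} into this proof; this is fine logically, and a matter of taste which one prefers.
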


\begin{proof}
The claim follows, for example, by looking at the square
\[
\xymatrix@C+1em{
  I \obackslash X_0
  \ar@{->>}[r]_-{\triv}^-{\delta_1 \obackslash X_1}
  \ar@{->>}[d]^{\triv}_{\delta_0 \hatobackslash f}
&
  X_0
  \ar@{->>}[d]^{f}
\\
  M f
  \ar@{->>}[r]_{e}
&
  X_1
\rlap{.}}
\]
The top map is a trivial fibration as $X_1$ is fibrant.
The left map is a trivial fibration as $f$ is a fibration.
By \cref{trivial-fibration-2-out-of-3}, the bottom map is a trivial fibration exactly if the right map is.
\end{proof}

\begin{lemma} \label{equivalence-variations}
For a map $f \co X_0 \to X_1$ between fibrant objects, the following are equivalent:
\begin{enumerate}[label=(\roman*)]
\item $f$ is an equivalence,
\item $f$ decomposes as a section of a trivial fibration followed by a trivial fibration,
\item $f$ is a homotopy equivalence.
\end{enumerate}
\end{lemma}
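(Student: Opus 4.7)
The plan is to prove the cyclic chain of implications (i) $\Rightarrow$ (ii) $\Rightarrow$ (iii) $\Rightarrow$ (i), exploiting repeatedly the structure of the mapping cocylinder factorization $f = e \cc j$ established just above. Recall in particular that $q \co M f \to X_0$ is a base change of the endpoint projection $\delta_0 \obackslash X_1$ and hence a trivial fibration (using fibrancy of $X_1$), with $j$ serving both as its section and as a strong codeformation retract section; explicitly, $q \cc j = \id_{X_0}$ and $j \cc q$ is homotopic to $\id_{M f}$. Note also that $M f$ is fibrant since $e$ is a fibration into the fibrant $X_1$.

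For (i) $\Rightarrow$ (ii), the mapping cocylinder factorization directly provides the required decomposition: $j$ is a section of the trivial fibration $q$, and $e$ is a trivial fibration by hypothesis. For (ii) $\Rightarrow$ (iii), suppose $f = p \cc s$ with $p \co Z \to X_1$ a trivial fibration and $s \co X_0 \to Z$ a section of a trivial fibration $t \co Z \to X_0$. Since $p$ is a fibration into the fibrant object $X_1$, the middle object $Z$ is fibrant. By \cref{fibrant-and-homotopy-equivalence-is-trivial-fibration}, both $p$ and $t$ are homotopy equivalences; since $s$ is a section of $t$, applying 2-out-of-3 for homotopy equivalences (\cref{homotopy-equivalence-2-out-of-3}) to $t \cc s = \id_{X_0}$ shows $s$ is a homotopy equivalence, and then another application to $f = p \cc s$ shows the same for $f$.

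For the remaining direction (iii) $\Rightarrow$ (i), which is the main step, the key observation is the chain
\[
  f \cc q = e \cc j \cc q \;\sim\; e \cc \id_{M f} = e,
\]
exhibiting $e$ as homotopic to $f \cc q$. Since $q$ is a trivial fibration between fibrant objects, it is a homotopy equivalence by \cref{fibrant-and-homotopy-equivalence-is-trivial-fibration}; combined with $f$ being a homotopy equivalence by hypothesis, \cref{homotopy-equivalence-2-out-of-3} shows $e$ is a homotopy equivalence (after replacing the strict triangle by the triangle commuting up to the above homotopy). Since $e$ is moreover a fibration between fibrant objects, \cref{fibrant-and-homotopy-equivalence-is-trivial-fibration} upgrades it to a trivial fibration, giving (i). The only subtle point is bookkeeping of fibrancy to legitimize the invocations of \cref{homotopy-equivalence-2-out-of-3,fibrant-and-homotopy-equivalence-is-trivial-fibration}, and verifying that $j \cc q \sim \id_{M f}$ as indicated by the strong codeformation retract structure of $q$.
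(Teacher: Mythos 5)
Your proof is correct and follows the same strategy as the paper's (the paper is terse, stating only that (i) $\Rightarrow$ (ii) is immediate and the rest follow by repeated applications of \cref{fibrant-and-homotopy-equivalence-is-trivial-fibration} and \cref{homotopy-equivalence-2-out-of-3}; you have filled in exactly that cycle (i) $\Rightarrow$ (ii) $\Rightarrow$ (iii) $\Rightarrow$ (i)). One small simplification available in your (iii) $\Rightarrow$ (i) step: rather than working with the homotopy $f \cc q \sim e$, you could first conclude that $j$ is a homotopy equivalence from 2-out-of-3 applied to the strict equality $q \cc j = \id_{X_0}$, and then apply 2-out-of-3 to the strictly commuting triangle $e \cc j = f$ to get that $e$ is a homotopy equivalence, avoiding any need to invoke the codeformation-retract homotopy $j \cc q \sim \id_{M f}$.
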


\begin{proof}
The direction from~(i) to~(ii) is immediate.
The rest follow from repeated applications of \cref{fibrant-and-homotopy-equivalence-is-trivial-fibration,homotopy-equivalence-2-out-of-3}.
\end{proof}

\section{The equivalence extension property}
\label{section:glueing}

We continue working in the setting established in \cref{section:setting,section:preliminary}.
We will now give the central technical aspect of our development, the proof of the equivalence extension property.
It derives from the glueing construction of~\cite{cohen-et-al:cubicaltt} in their cubical sets, but is generalized to an abstract setting and presented in categorical terms.

\begin{proposition}[Equivalence extension property] \label{glueing-core}
Consider the solid part of the diagram
\begin{equation} \label{glueing-core:0}
\begin{gathered}
\xymatrix{
  X_0
  \ar@{.>}[rr]
  \ar[dr]
  \ar@{->>}[dd]
  \fancypullback{[dd]}{[rrr]}{[ddrrr]}[0.3cm][0.6]
&&
  Y_0
  \ar@{.>}[dr]
  \ar@{.>>}[dd]|{\hole}
&\\&
  X_1
  \ar[rr]
  \ar@{->>}[dl]
  \fancypullback{[dl]}{[rr]}{[dr]}[0.3cm][0.6]
&&
  Y_1
  \ar@{->>}[dl]
\\
  A
  \ar[rr]
&&
  B
&
}
\end{gathered}
\end{equation}
where the lower square is a pullback, the maps $X_0 \to A$ and $Y_1 \to B$ are fibrations.
Assume that:
\begin{enumerate}[label=(\roman*)]
\item \label{glueing-core:equiv} the map $X_0 \to X_1$ is an equivalence over $A$,
\item \label{glueing-core:cofib} the map $A \to B$ is a cofibration.
\end{enumerate}
Then there is $Y_0$ fitting into the diagram as indicated such that the back square is a pullback, the map $Y_0 \to B$ is a fibration, and the map $Y_0 \to Y_1$ is an equivalence over $B$.
\end{proposition}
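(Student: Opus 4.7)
The approach is to translate the glueing construction of~\cite{cohen-et-al:cubicaltt} into the abstract setting. Let $m \co A \to B$ denote the cofibration. Since $m$ is a mono by condition~(ii) of \cref{suitable-wfs}, the adjunction $m^* \dashv m_*$ is a reflection with $m^* m_* \iso \id$, as exploited by Joyal's trick (\cref{triv-fib-extend-along-cof}). I would define
\[
  Y_0 \;\defeq\; Y_1 \times_{m_* X_1} m_* X_0
\]
as a pullback in $\cal{E}$, using the unit $Y_1 \to m_* m^* Y_1 = m_* X_1$ as one leg and the pushforward $m_* f$ as the other. The map $Y_0 \to Y_1$ is the first projection and $Y_0 \to B$ its composition with $Y_1 \fib B$. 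Applying the left adjoint $m^*$ to this defining pullback and using $m^* m_* \iso \id$ yields $m^* Y_0 = X_1 \times_{X_1} X_0 = X_0$, giving the required back pullback square in~\eqref{glueing-core:0}.

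To access the equivalence structure of $f$, I would use its mapping cocylinder factorization $X_0 \xrightarrow{j} Mf \xrightarrow{e} X_1$ in $\cal{E}_{/A}$: since $f$ is an equivalence and $e$ is a fibration, \cref{fibration-equivalence-same-as-trivial-fibration} gives that $e$ is a trivial fibration, while $j$ is a strong deformation retract, as a section of the trivial fibration $Mf \to X_0$. Joyal's trick applied to the cofibration $X_1 \to Y_1$ (a base change of $m$ by condition~(iii) of \cref{suitable-wfs}) extends $e$ to a trivial fibration $\tilde e \co Z \fib Y_1$ with $m^* Z = Mf$. Using that the pushforward preserves this trivial fibration via the adjunction, one checks $Y_1 \times_{m_* X_1} m_* Mf \iso Z$, so the pullback square above refactors as $Y_0 = Z \times_{m_* Mf} m_* X_0$ with $Y_0 \to Z$ a pullback of $m_* j$.

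The \textbf{main obstacle} is showing $Y_0 \to B$ is a fibration. Since $j$ is merely a strong deformation retract, not a fibration, simple adjointness does not yield fibrancy of $Y_0 \to Z$ through $m_* j$. Instead, I would verify the lifting property directly: testing against a generating trivial cofibration $\delta_k \hatotimes i \in \cal{J}$, one first lifts the $Y_1$-component (via $Y_1 \fib B$), then restricts to $A$ to obtain a partial lift into $X_1$ that must be promoted to $X_0$ along $f$. This promotion proceeds by first lifting into $Mf$ using the trivial fibration $Mf \fib X_1$ together with the cofibration input to the lifting problem, then using the strong deformation retract structure of $j$ to produce a lift into $X_0$ together with a compensating homotopy in $X_1$; the homotopy is absorbed back into the $Y_1$-lift using a connection of the functorial cylinder and the Frobenius property (\cref{frobenius}) to keep the overall map a lift. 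This ``adjust-then-lift'' pattern is the categorical rendering of the Glue composition operation of~\cite{cohen-et-al:cubicaltt}.

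Once fibrancy of $Y_0 \to B$ is established, the equivalence property of $Y_0 \to Y_1$ follows comparatively easily. By \cref{equivalence-variations}, it suffices to produce a homotopy inverse $Y_1 \to Y_0$ over $B$ together with the witnessing inverse homotopies. These are constructed by extending a chosen homotopy inverse of $f$ (obtained from the equivalence hypothesis, via the mapping cocylinder) from $\cal{E}_{/A}$ to $\cal{E}_{/B}$ via pushforward, and packaging the data into the defining pullback of $Y_0$; the accompanying inverse homotopies are promoted in the same manner, using connections of the functorial cylinder to establish the required strict compatibilities.
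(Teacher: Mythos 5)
Your construction of $Y_0$ via pushforward along the mono $m$ (equivalently, the Glue formula $Y_1 \times_{m_* X_1} m_* X_0$) matches the paper's, as does the use of the mapping cocylinder factorization $X_0 \xrightarrow{j} Mf \xrightarrow{e} X_1$ and Joyal's trick. You also correctly identify the crux: $j$ is only a strong deformation retract, so adjointness alone does not make $Y_0 \to Z$ a fibration. But at this crux you diverge from the paper and leave a genuine gap.

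The paper's key insight, which you miss, is that the pushforward $A_*$ factors through the exponentiation monad $(-)^A = A_* A^*$ on $\cal{E}_{/B}$, so that the map $Y_0 \to N$ (the pushforward of $j$) is a base change of
\[
Y_1 \xrightarrow{\angles{(\epsilon \obackslash_B Y_1^A) \cc \eta_{Y_1}, \id}} (I \obackslash_B Y_1)^A \times_{Y_1^A} Y_1
\rlap{,}
\]
and this map factors as the cocontraction $\epsilon \obackslash_B Y_1$ (a section of the trivial fibration $\delta_0 \obackslash_B Y_1$) followed by the Leibniz exponential $A \hatobackslash_B (\delta_1 \obackslash_B Y_1)$, which is a trivial fibration by \cref{cofibration-exp-trivial-fibrations} precisely because $A \to B$ is a cofibration. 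Composing with the trivial fibration $N \to Y_1$ and the fibration $Y_1 \to B$, this exhibits $Y_0$ as a domain retract of a fibrant object over $B$ (giving fibrancy with no lifting argument) and simultaneously exhibits $Y_0 \to Y_1$ as a section of a trivial fibration followed by a trivial fibration, hence an equivalence by \cref{equivalence-variations} — both conclusions fall out of the same factorization.

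Your replacement — a direct verification of the lifting property that mirrors the cubical Glue composition operation — is in principle viable, but as sketched it does not close the gap. The ``absorb the homotopy back into the $Y_1$-lift'' step is the entire content of the argument and is exactly where the cofibrancy of $A \to B$ must enter; you would need to solve a homotopy extension problem over the cofibration $T \times_B A \to T$, and then carefully verify that the adjusted $Y_1$-lift, the produced $X_0$-lift, and the given boundary data in $Y_0$ are mutually compatible so as to assemble into a single map into the pullback defining $Y_0$. None of this is spelled out, and the invocation of the Frobenius property here is a red herring: the paper's proof of this proposition does not use Frobenius at all (it is only needed later, in \cref{right-proper-model-structure}). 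Likewise, your treatment of the equivalence of $Y_0 \to Y_1$ by ``extending a chosen homotopy inverse via pushforward'' would require a separate argument that pushforward respects the relevant homotopical data; the paper avoids this entirely since the equivalence comes for free from the factorization above.
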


\begin{proof}
By assumption~\ref{glueing-core:cofib} and stability of cofibrations under pullback, the map $X_1 \to Y_1$ is also a cofibration.
In particular, base change and pushforward along $X_1 \to Y_1$ form a reflection.
Let $X_0 \to M \to X_1$ be the mapping cocylinder factorization of $f$ over $A$.
We let $Y_0 \to N \to Y_1$ be its pushforward along $X_1 \to Y_1$, defining all dotted maps in~\eqref{glueing-core:0} in the process.
Note that $M \to X_1$ is a trivial fibration by assumption~\ref{glueing-core:equiv}.
By stability of trivial fibrations under pushforward, we have that $N \to Y_1$ is a trivial fibration.

Of central importance will be the adjunction $A^* \dashv A_*$ relative to $B$.
It induces the monad on $\cal{E}_{/B}$ of exponentiation with $A$, denoted $(-)^A \defeq A_* A^*$ with unit $\eta$.
Observe, for example by normalizing polynomial functors, that pushforward along $X_1 \to Y_1$ can be written as the composition
\begin{equation} \label{glueing-core:1}
\begin{gathered}
\xymatrix{
  \cal{E}_{/X_1}
  \ar[r]^-{A_*}
&
  \cal{E}_{/Y_1^A}
  \ar[r]^-{\eta_{Y_1}^*}
&
  \cal{E}_{/Y_1}
\rlap{.}}
\end{gathered}
\end{equation}

For the remainder of the proof, we will look at the map
\begin{equation} \label{glueing-core:2}
\begin{gathered}
\xymatrix@C+6em{
  Y_1
  \ar[r]^-{\angles{(\epsilon \obackslash_B Y_1^A) \cc \eta_{Y_1}, \id}}
&
  (I \obackslash_B Y_1)^A \times_{Y_1^A} Y_1
}
\end{gathered}
\end{equation}
living in $\cal{E}_{/Y_1^A}$ via $(\delta_0 \obackslash Y_1)^A \cc \pi_0$.
We make two claims:
\begin{enumerate}[label=(\alph*)]
\item \label{glueing-core:base-change}
the map $Y_0 \to N$ arises as a base change of it (along the map $A_* X_0 \to Y_1^A$),
\item \label{glueing-core:factorization}
it factors as a section (over $Y_1^A$) of a trivial fibration followed by a trivial fibration.
\end{enumerate}
By stability under pullback, the map $Y_0 \to N$ will then inherit a factorization into a section (over $A_* X_0$ and hence also $B$) of a trivial fibration followed by a trivial fibration.
Composing with the trivial fibration $N \to Y_1$ and the fibration $Y_1 \to B$, this will exhibit $Y_0$ as a retract of a fibrant object over $B$ and simultaneously $Y_0 \to Y_1$ as an equivalence by \cref{equivalence-variations}.%
\footnote{Actually, the induced decomposition of $Y_0 \to Y_1$ into a section of a trivial fibration followed by a trivial fibration turns out to be its mapping cocylinder factorization, so the use of \cref{equivalence-variations} to establish it as an equivalence is not needed.}

For claim~\ref{glueing-core:base-change}, recall the construction of the mapping cocylinder factorization: the map $X_0 \to M$ over $X_1$ is a pullback of
\[
\xymatrix@C+2em{
  X_1
  \ar[r]^-{\epsilon \obackslash_A X_1}
&
  I \obackslash_A X_1
}
\]
along $X_0 \times_A X_1 \to X_1 \times_A X_1$ living in $\cal{E}_{/X_1}$ via the second projection.
Note that $\epsilon \obackslash_A X_1$ is itself the pullback of $\epsilon \obackslash_B Y_1$ along $A \to B$ (see \cref{cylinder-slicing}).
Using the description~\eqref{glueing-core:1} of pushforward along $X_1 \to Y_1$ and preservation of pullbacks by right adjoints, it follows that $Y_0 \to N$ is a pullback of the map~\eqref{glueing-core:2} along $A_* X_0 \times_B Y_1 \to Y_1^A \times_B Y_1$ in $\cal{E}_{/Y_1}$, hence also along $A_* X_0 \to Y_1^A$ in $\cal{E}_{/B}$.

For claim~\ref{glueing-core:factorization}, we factorize as follows:
\[
\xymatrix@C+0em{
  Y_1
  \ar[rr]^-{\angles{(\epsilon \obackslash_B Y_1^A) \cc \eta_{Y_1}, \id}}
  \ar[dr]_{\epsilon \obackslash_B Y_1}
&&
  (I \obackslash_B Y_1)^A \times_{Y_1^A} Y_1
\rlap{.}\\&
  I \obackslash_B Y_1
  \ar[ur]_{A \hatobackslash_B \delta_1 \obackslash_B Y_1}
}
\]
The first factor is a section of the map $\delta_0 \obackslash_B Y_1$ (over $Y_1^A$), a trivial fibration since $Y_1$ is fibrant over $B$.
The second factor is the Leibniz exponential of the trivial fibration $\delta_1 \obackslash_B Y_1$ with the cofibration $A \to B$ (assumption~\ref{glueing-core:cofib}), a trivial fibration by~\cref{cofibration-exp-trivial-fibrations}.
\end{proof}

\begin{remark}
The decomposition strategy in the above proof, factoring the given map $X_0 \to X_1$ via the mapping cocylinder factorization into a specific strong deformation retract followed by a trivial fibration, is evocative of the related proof of univalence in the simplicial setting of~\cite[Theorem~3.4.1]{voevodsky-simplicial-model}, which would factor the map $X_0 \to X_1$ as a cofibration that is a strong deformation retract followed by a trivial fibration (note that \ibid takes the Kan model structure on simplicial sets for granted).
The difference is that the mapping cocylinder factorization does not in general produce a cofibration as its first factor.

Instead, in order to proceed similarly to~\cite[Theorem~3.4.1]{voevodsky-simplicial-model}, one could use the (cofibration, trivial factorization)-factorization of $X_0 \to X_1$, apply \cref{homotopy-equivalence-2-out-of-3} to make the cofibration into a homotopy equivalence, and then show that a cofibration between fibrant objects that is a homotopy equivalence is also trivial cofibration and hence a strong deformation retract (relative to $A$).
\end{remark}

\begin{remark} \label{algebraic}
Note that~\cite{cohen-et-al:cubicaltt} uses an algebraic (or uniform) notion of fibration where chosen lifts against generating trivial cofibrations are part of the data of a fibration; see~\cite{gambino-sattler:frobenius} for an abstract treatment.
In that context, for showing the algebraic analogue of the extension property of fibrations along trivial cofibrations, it is required that the back pullback square in \eqref{glueing-core:0} additionally forms a morphism of fibrations, \ie cohering with the chosen lifts of $X_0 \fib A$ and $Y_0 \fib B$.

To accomplish this, one needs to additionally assume that cofibrations are closed under right adjoint application of the functorial cylinder $I$, complementing condition~(v) of \cref{suitable-wfs}.
This is an equivalent phrasing of the $\forall$-condition of~\cite{cohen-et-al:cubicaltt}, requiring that the right adjoint to pullback of subobjects along any component of the contraction $\epsilon \co \Id \to I$ preserves cofibrations.
With this, it is possible in any pullback square $p' \to p$ of uniform fibrations, not necessarily cohering with the lifting structures, to replace the lifting structure on $p$ by one that makes $p' \to p$ into a morphism of fibrations.

However, in our setting of ordinary (non-uniform) fibration, this assumption is not needed.
We leave the treatment of the algebraic case, dealing with algebraic wfs's and algebraic model structures and extending the treatment in the second part of~\cite{gambino-sattler:frobenius}, for further work.
\end{remark}

We also give a lemma whose use will be closely related to the equivalence extension property.
It will be needed in \cref{section:composition}.

\begin{lemma} \label{path-to-homotopy-equivalence}
Any fibration $p \co X \to I \otimes A$ gives rise to a homotopy equivalence over $A$ between the fibers $X_0$ and $X_1$, where $p_k \co X_k \to A$ is the pullback of $X$ along $\delta_k \otimes A$ for $k \in \braces{0, 1}$.
\end{lemma}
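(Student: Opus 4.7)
My plan is to construct transport maps $t \co X_0 \to X_1$ and $s \co X_1 \to X_0$ over $A$ using the fibration lifting property of $p$, and then exhibit the required homotopies $s \cc t \simeq \id_{X_0}$ and $t \cc s \simeq \id_{X_1}$ over $A$ via two-dimensional lifts.

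First, I record the structural facts I will use. Every object is cofibrant (the nullary case of condition~(iv) of \cref{suitable-wfs}) and the endpoint inclusions $\delta_k$ are valued in cofibrations, so $\delta_k \otimes A \co A \to I \otimes A$ and $\delta_k \otimes X_k \co X_k \to I \otimes X_k$ are trivial cofibrations, being in the saturation of $\cal{J}$. By the Frobenius property (\cref{frobenius}) applied to $p$, the pullback $\iota_k \co X_k \to X$ of $\delta_k \otimes A$ along $p$ is also a trivial cofibration; and $p_k \co X_k \to A$ is a fibration as a base change of $p$.

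Second, I construct $t$ by lifting in the square
\[
\xymatrix@C+1em{
  X_0 \ar[r]^-{\iota_0} \ar[d]_-{\delta_0 \otimes X_0} &
  X \ar[d]^-{p}
\\
  I \otimes X_0 \ar@{.>}[ur]|-{\ell} \ar[r]_-{I \otimes p_0} &
  I \otimes A \rlap{,}
}
\]
which commutes by naturality of $\delta_0$. Setting $t \defeq \ell \cc (\delta_1 \otimes X_0)$ and invoking the universal property of the pullback defining $X_1$ produces $t \co X_0 \to X_1$ over $A$. Symmetrically, a lift $\ell' \co I \otimes X_1 \to X$ of $\iota_1$ extending $\delta_1 \otimes X_1$ yields $s \co X_1 \to X_0$ over $A$.

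Third, I verify $s \cc t \simeq \id_{X_0}$ over $A$; the other composite is handled symmetrically. I would set up a 2-dimensional lifting problem against $p$ on $I \otimes I \otimes X_0$, where the outer $I$ is the homotopy direction $u$ and the inner $I$ is the transport direction $v$. The ``horn'' inclusion omitting the $v = 0$ face is the Leibniz application $i^1 \hatotimes (\delta_1 \otimes X_0)$, which by assumption~\ref{boundary-fibration} is a trivial cofibration. On the three included faces I place $\ell$ on the $u = 0$ face, $\ell' \cc (I \otimes t)$ on the $u = 1$ face, and the constant at $\iota_1 \cc t$ on the $v = 1$ face. As base map $I \otimes I \otimes X_0 \to I \otimes A$ I choose $(I \otimes p_0) \cc (\epsilon \otimes I \otimes X_0)$, the path constant in the homotopy direction $u$. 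Corner compatibility is immediate from the definitions of $t$ and $s$. The filler restricted to the $v = 0$ face has base constantly at $\delta_0 \otimes p_0$, and so factors uniquely through $\iota_0$ as a map $H \co I \otimes X_0 \to X_0$; by construction, $H$ is an over-$A$ homotopy from $\id_{X_0}$ to $s \cc t$. The main obstacle is arranging the boundary and base data so the filler yields a homotopy \emph{over $A$}, not merely over $I \otimes A$; the constant-in-$u$ choice of base accomplishes this, and assumption~\ref{boundary-fibration} enters precisely to make the relevant horn inclusion a trivial cofibration.
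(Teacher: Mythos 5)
Your proof is correct and follows essentially the same strategy as the paper: construct transport maps by lifting $\delta_k \otimes X_k$ against $p$, then build the homotopy $s t \sim \id_{X_0}$ as the boundary restriction of a two-dimensional lift. The one genuine difference is the choice of ``horn.'' You take the open box $i^1 \hatotimes (\delta_1 \otimes X_0)$, whose missing face is in the \emph{inner} ($v$) direction, with base constant in the \emph{outer} ($u$) direction, so that you need assumption~\ref{boundary-fibration} to know this horn is a trivial cofibration. The paper instead uses $\delta_1 \hatotimes (i^1 \otimes X_0)$, omitting the \emph{outer} $u = 0$ face with base $I \otimes (\epsilon \otimes p_0)$ constant in $v$; that horn is $\delta_1$ Leibniz-applied to a cofibration, hence a trivial cofibration directly by the definition of $\F$ — no appeal to~\ref{boundary-fibration} is needed. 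Concretely: the paper's argument works for any $p$ lifting against $\braces{\delta_0, \delta_1} \hatotimes \C$ alone, whereas yours additionally invokes the closure property~\ref{boundary-fibration}. Both are valid in this paper's setting, since~\ref{boundary-fibration} is a standing assumption, but the paper's transposition is more economical (and is implicitly what makes the remark ``the proof does not make use of connections'' meaningful — the paper's variant also doesn't make use of~\ref{boundary-fibration}). One small cleanup: your appeal to the Frobenius property to conclude that $\iota_k \co X_k \to X$ is a trivial cofibration is never used — both of your lifting problems are against $\delta_0 \otimes X_0$ and the horn inclusion, not against $\iota_k$ — so that paragraph can be trimmed.
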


\begin{proof}
This is standard, using the structure of the functorial cylinder and that every object is cofibrant.

Lifting $\delta_0 \otimes X_0$ against $p$ gives a map $f_0 \co I \otimes X_0 \to X$.
Precomposing $f_0$ with $\delta_1 \otimes X_0$ induces a map $u_0 \co X_0 \to X_1$.
We define maps $f_1$ and $u_1$ dually.
To see that $u_1 u_0 \sim \id_{X_0}$, we use a lifting problem
\[
\xymatrix{
  I \otimes X_0 +_{X_0} I \otimes X_0 +_{X_0} I \otimes X_0
  \ar[r]
  \ar[d]_{\delta_1 \hatotimes i^1 \otimes X_0}
&
  X
  \ar[d]
\\
  I \otimes I \otimes X_0
  \ar[r]_{I \otimes \epsilon \otimes p_0}
  \ar@{.>}[ur]
&
  I \otimes A
}
\]
where the three components of the top map are given by $f_0$, $u_0 \cc (\epsilon \otimes X_0)$, and $f_1 \cc (I \otimes u_0)$.
Precomposing with $\delta_0 \otimes I \otimes X_0$ induces the required homotopy.
We see $u_0 u_1 \sim \id_{X_1}$ dually.
\end{proof}

Note that the proof of \cref{path-to-homotopy-equivalence} does not make use of connections.

\section{Composition versus Filling}
\label{section:composition}

In order to give a categorical treatment of \emph{composition} as introduced by~\cite{cohen-et-al:cubicaltt} in comparison to lifting against $\cal{J}$ termed \emph{filling} in \ibid, we will generalize the lifting relation from arrows to squares.
We write $\cal{E}^\Box$ for the category of commuting squares in $\cal{E}$, defined as the arrow category of $\cal{E}^\to$.
We will write an object of $\cal{E}^{\Box}$ as $(u, v) \co f \to g$ where $u \co \dom(f) \to \dom(g)$ and $v \co \cod(f) \to \cod(g)$.

\begin{definition} \label{square-lifting}
We say a square $(u, v) \co l' \to l$ \emph{lifts} against an arrow $r$ if for dashed maps making the diagram
\begin{equation} \label{square-lifting:0}
\begin{gathered}
\xymatrix{
  \bullet
  \ar[r]^{u}
  \ar[d]_{l'}
&
  \bullet
  \ar@{-->}[r]
  \ar[d]_(0.3){l}
&
  \bullet
  \ar[d]^{r}
\\
  \bullet
  \ar[r]_{v}
  \ar@{.>}[urr]
&
  \bullet
  \ar@{-->}[r]
&
  \bullet
}
\end{gathered}
\end{equation}
commute, there is a dotted filler as indicated.
\end{definition}

We allow ourselves to view any arrow $l$ as a square via the identity $\id \co l \to l$.
Observe that $l$ lifting against an arrow $r$ does not depend on whether we see $l$ as an arrow or a square.

The Galois connection $\liftl{(-)} \dashv \liftr{(-)}$ between classes of arrows generalizes to a Galois connection between classes of squares on the left and classes of arrows on the right, denoted using the same operators.
In fact, the former adjunction factors through the latter via the adjunction generated by the inclusion of arrows into squares described above.

\begin{remark}
It is possible to generalize \cref{square-lifting} further to liftings of squares on the left against squares on the right.
This comes with an analogous Galois connections between lifting operators that the Galois connections considered previously factor through.
Although this makes the situation more symmetric, we do not have any need for that generality here.
\end{remark}

\begin{remark}
In a category with pushouts, the lifting problem~\eqref{square-lifting:0} is equivalent to a lifting problem of $(\id, v) \co l'' \to l$ against $r$ where $l''$ is the pushout of $l'$ along $v$.
That is, the lifting relation can be reduced to squares with an identity as top map.
Even though our setting has pushouts, we prefer to work with arbitrary squares: first, because the square~\eqref{functorial-cylinder:disjoint-endpoints} we will be working with naturally arises with a non-identity at the top; second, because the analogous reduction of the extension relation of \cref{extension-along-squares} depends on a van Kampen condition of the pushout defining $l''$ and a locality assumption on the class $\cal{B}$.
\end{remark}

\begin{definition} \label{biased-retract}
A \emph{biased retract} of a square $g' \to g$ to a square $f' \to f$ consists of maps $f' \to g'$ and $g \to f$ such that the following diagram commutes:
\begin{equation} \label{biased-retract:0}
\begin{gathered}
\xymatrix{
  f'
  \ar[r]
  \ar@{.>}[d]
  \ar@/_2em/[dd]_{\id}
&
  f
  \ar@/^2em/[dd]^{\id}
\\
  g'
  \ar[r]
&
  g
  \ar@{.>}[d]
\\
  f'
  \ar[r]
&
  f
}
\end{gathered}
\end{equation}
\end{definition}

Note that a retract between arrows gives rise to a biased retract between the induced squares.

\begin{lemma} \label{lifting-closed-under-biased-retract}
Given a class of maps $\cal{R}$, the class of squares that $\cal{R}$ lifts against is closed under biased retract.
\end{lemma}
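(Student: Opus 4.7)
The plan is to take any lifting problem for $f' \to f$ against an arrow $r \in \cal{R}$, transport it by post-composition with $\beta$ to a lifting problem for $g' \to g$ against $r$, solve that by hypothesis, and then pull the resulting filler back by pre-composition with the codomain component of $\alpha$. The biased-retract equation---that the composite $f' \to g' \to g \to f$ in $\cal{E}^\to$ agrees with the given $f' \to f$---is exactly what will make both triangles of the original lifting problem commute after transport.

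In detail, I would write $\alpha = (\alpha_1, \alpha_2) \co f' \to g'$ and $\beta = (\beta_1, \beta_2) \co g \to f$ componentwise, and read off from \eqref{biased-retract:0} the identities $u = \beta_1 p \alpha_1$ and $v = \beta_2 q \alpha_2$, where $(u, v) = f' \to f$ and $(p, q) = g' \to g$; note that the outer identity curves force both copies of $f' \to f$ in the diagram to coincide. Given a lifting problem specified by a morphism $(t, b) \co f \to r$ in $\cal{E}^\to$, post-composing with $\beta$ yields $(t\beta_1, b\beta_2) \co g \to r$, which is a valid morphism in $\cal{E}^\to$ since $r \cc t \cc \beta_1 = b \cc f \cc \beta_1 = b \cc \beta_2 \cc g$. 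By hypothesis, the resulting lifting problem for $(p, q)$ against $r$ admits a filler $\tilde d \co \cod(g') \to \dom(r)$.

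Setting $d \defeq \tilde d \cc \alpha_2$ then provides a candidate filler for the original problem, and the two triangles follow from short chases: the upper uses $\alpha_2 \cc f' = g' \cc \alpha_1$ (from $\alpha$ being a morphism in $\cal{E}^\to$) together with $u = \beta_1 p \alpha_1$, while the lower uses $v = \beta_2 q \alpha_2$ directly. There is no real obstacle---the argument is essentially diagram chasing---but the one conceptual point worth noting is the asymmetry of the biased retract: the codomain component $\alpha_2$ is what transports the filler back, $\beta$ transports the lifting problem (using both of its components), while $\alpha_1$ and the morphism-in-$\cal{E}^\to$ laws for $\alpha$ and $\beta$ enter only as commutation checks.
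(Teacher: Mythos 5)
Your proof is correct and is exactly the diagram chase the paper leaves implicit (the paper's proof is just the phrase ``Straightforward diagram chasing''): transport the lifting problem forward along $\beta \co g \to f$, solve against $g' \to g$, and pull the filler back via $\alpha_2$, with the biased-retract identities $u = \beta_1 p \alpha_1$ and $v = \beta_2 q \alpha_2$ closing both triangles.
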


\begin{proof}
Straightforward diagram chasing.
\end{proof}

Given a map of arrows $f' \to f$ that is a section, note that $f'$ (seen as a square) is a biased retract of $f' \to f$.

We recall the square $\theta_k \co \bot_{\Id} \to \delta_k$ for $k \in \braces{0, 1}$ from~\cite{gambino-sattler:frobenius}, which is just~\eqref{functorial-cylinder:disjoint-endpoints} read in either horizontal or vertical direction.
We let $J'$ denote the set of Leibniz applications of $\theta_0$ and $\theta_1$ to generating cofibrations
\begin{equation} \label{gen-triv-cof-comp}
\cal{J}' \defeq \set{\theta_k \hatotimes m \co m \to \delta_k \hatotimes m }{k \in \braces{0, 1}, m \in \cal{I}}
\rlap{.}
\end{equation}
Observe that the square $\theta_k \hatotimes m$ in $\cal{J}'$ is a biased retract of the arrow $\delta_k \hatotimes m$ in $\cal{J}$.
As shown in \cite[Lemma~3.4]{gambino-sattler:frobenius}, the presence of connections makes $\theta_k \hatotimes \delta_k \hatotimes m$ a section for any $m \in \cal{I}$.
Hence, conversely, the arrow $\delta_k \hatotimes m$ is a biased retract of the square $\theta_k \hatotimes \delta_k \hatotimes m$.
Since $\cal{I}$ is a generator for $\C$, an easy adjointness argument shows that $\cal{J}'$ has the same right lifting class of arrows as the class obtained by replacing $\cal{I}$ with $\C$ in~\eqref{gen-triv-cof-comp}.
In view of \cref{lifting-closed-under-biased-retract}, we thus have shown the following.

\begin{corollary}[Filling and Composition are equivalent] \label{composition-is-filling}
We have $\F = \liftr{\cal{J}} = \liftr{(\cal{J}')}$.
\qed
\end{corollary}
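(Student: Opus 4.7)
The equality $\F = \liftr{\cal{J}}$ is immediate from the definition of the wfs $(\TC, \F)$ as generated by $\cal{J}$, so the content of the corollary lies in proving $\liftr{\cal{J}} = \liftr{(\cal{J}')}$. Both inclusions will follow by combining \cref{lifting-closed-under-biased-retract} with the biased-retract relationships between elements of $\cal{J}$ and $\cal{J}'$ highlighted in the paragraph preceding the statement.

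For the inclusion $\liftr{\cal{J}} \subseteq \liftr{(\cal{J}')}$, I would directly apply the observation that, for each $m \in \cal{I}$ and $k \in \braces{0,1}$, the square $\theta_k \hatotimes m$ is a biased retract of the arrow $\delta_k \hatotimes m$ viewed as a square. If $r \in \liftr{\cal{J}}$, then $r$ lifts against $\delta_k \hatotimes m$, so by \cref{lifting-closed-under-biased-retract} applied to $\cal{R} \defeq \braces{r}$ (or more succinctly to $\liftr{\braces{r}}$ on the left), $r$ also lifts against $\theta_k \hatotimes m$. Thus $r \in \liftr{(\cal{J}')}$.

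For the reverse inclusion, I would first enlarge $\cal{J}'$ to
\begin{equation*}
  \widetilde{\cal{J}}' \defeq \set{\theta_k \hatotimes m}{k \in \braces{0,1}, m \in \C}
\end{equation*}
and argue that $\liftr{(\cal{J}')} = \liftr{(\widetilde{\cal{J}}')}$. This is the adjointness argument alluded to in the text: a square-lifting problem of $\theta_k \hatotimes m$ against an arrow $r$ is, by the standard adjunction underlying the Leibniz construction, equivalent to an ordinary arrow-lifting problem of $m$ against a pullback-Leibniz construction formed from $\theta_k$ and $r$; since $\cal{I}$ generates $\C$ as a saturated class, the class of $m$ satisfying this ordinary lifting property, being closed under pushouts, transfinite composition, and retracts, contains all of $\C$ as soon as it contains $\cal{I}$.

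With this enlargement at hand, the presence of connections gives by \cite[Lemma~3.4]{gambino-sattler:frobenius} that $\theta_k \hatotimes \delta_k \hatotimes m$ admits a section for every $m \in \cal{I}$, and hence the arrow $\delta_k \hatotimes m \in \cal{J}$ is a biased retract of the square $\theta_k \hatotimes \delta_k \hatotimes m \in \widetilde{\cal{J}}'$ (note $\delta_k \hatotimes m \in \C$ by \cref{pushout-application}). Invoking \cref{lifting-closed-under-biased-retract} once more yields $\liftr{(\widetilde{\cal{J}}')} \subseteq \liftr{\cal{J}}$, closing the loop. The only delicate step is the adjointness argument extending $\cal{J}'$ to $\widetilde{\cal{J}}'$; everything else is formal manipulation of the biased-retract relation.
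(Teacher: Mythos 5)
Your proof is correct and takes essentially the same route as the paper: the discussion immediately preceding the corollary establishes exactly the two biased-retract relationships ($\theta_k \hatotimes m$ as a biased retract of $\delta_k \hatotimes m$, and $\delta_k \hatotimes m$ as a biased retract of $\theta_k \hatotimes \delta_k \hatotimes m$ via the section given by connections) together with the adjointness argument replacing $\cal{I}$ by $\C$, and then invokes \cref{lifting-closed-under-biased-retract}. Your write-up just spells these steps out more explicitly.
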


\section{The extension property}
\label{section:extension}

\begin{definition}[Extension along squares] \label{extension-along-squares}
A class of maps $\cal{B}$ has \emph{extension along a square} $l' \to l$,
\[
\xymatrix{
  U'
  \ar[r]
  \ar[d]_{l'}
&
  U
  \ar[d]^{l}
\\
  V'
  \ar[r]
&
  V
\rlap{,}}
\]
if for every map in $\cal{B}$ into $U$, there is a map in $\cal{B}$ into $V'$ that pulls back to the same map into $U'$.
\end{definition}

Given a map $l$, note that $\cal{B}$ extends along $l$ precisely if $\cal{B}$ extends along it when seen as a square $l \to l$.

\begin{lemma} \label{extension-closed-under-biased-retract}
Given a class of maps $\cal{B}$, the class of squares that $\cal{B}$ extends along is closed under biased retract.
\end{lemma}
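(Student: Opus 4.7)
The plan is to transport the given extension along $\beta$ to one along $\alpha$ by sandwiching it between two pullbacks supplied by the biased retract. I would begin by writing $\phi = (\phi_0, \phi_1)$, $\psi = (\psi_0, \psi_1)$, $\alpha = (\alpha_0, \alpha_1)$, and $\beta = (\beta_0, \beta_1)$ for the domain/codomain components of the morphisms of $\cal{E}^\to$ appearing in \cref{biased-retract}. Unpacking the equality $\psi \cc \beta \cc \phi = \alpha$ forced by the diagram \eqref{biased-retract:0} yields componentwise $\alpha_i = \psi_i \cc \beta_i \cc \phi_i$ for $i \in \braces{0, 1}$.

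Given a map $p$ in $\cal{B}$ targeting the top-right corner of the square $\alpha$, the construction of the required extension proceeds in three steps. First, pull $p$ back along $\psi_0$ to obtain a map into the top-right corner of $\beta$. Second, apply the assumed extension of $\cal{B}$ along $\beta$ to this pulled-back map, producing a map $\tilde{q}$ in $\cal{B}$ into the bottom-left corner of $\beta$ whose pullback along the left leg of $\beta$ agrees with $\beta_0^* \psi_0^* p$. Third, set $q \defeq \phi_1^*(\tilde{q})$, a map into the bottom-left corner of $\alpha$.

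To verify that $q$ witnesses extension along $\alpha$, I would compute its pullback along the left leg $f'$ of $\alpha$ by using that $\phi \co f' \to g'$ is a morphism in $\cal{E}^\to$, so $\phi_1 \cc f' = g' \cc \phi_0$. This gives $f'^* q = (\phi_1 \cc f')^* \tilde{q} = (g' \cc \phi_0)^* \tilde{q} = \phi_0^* \beta_0^* \psi_0^* p = (\psi_0 \cc \beta_0 \cc \phi_0)^* p = \alpha_0^* p$, which is exactly the condition demanded by \cref{extension-along-squares}. The only step that is not pure diagram-chasing is the observation that the two outer pullbacks $\psi_0^* p$ and $\phi_1^* \tilde{q}$ should themselves land in $\cal{B}$; this is automatic whenever $\cal{B}$ is stable under pullback, which covers all intended applications to classes of (trivial) fibrations.
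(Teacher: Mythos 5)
Your proof is correct and is essentially identical to the paper's: both pull $p$ back along $\psi_0$, apply the assumed extension along the square $g' \to g$, and then pull the result back along $\phi_1$, using $\psi_i \beta_i \phi_i = \alpha_i$ to verify coherence. You are, if anything, a bit more careful than the paper in explicitly flagging that the construction needs $\cal{B}$ to be stable under pullback (a hypothesis the paper's proof uses tacitly when it asserts that the pulled-back maps again lie in $\cal{B}$; this is harmless since the lemma is only invoked for fibrations).
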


\begin{proof}
Essentially following the structure of the proof of \cref{lifting-closed-under-biased-retract} (and corresponding to it in the presence of a classifier for $\cal{B}$; see \cref{classifier}).
For completeness, we still give the proof.

We work with the diagram \eqref{biased-retract:0}.
Given a $\cal{B}$-map into $\dom(f)$, we pull it back to a $\cal{B}$-map into $\dom(g)$.
Extension gives a $\cal{B}$-map into $\cod(g')$ coherent with respect to pulling back to $\dom(g')$.
Pulling back further gives a $\cal{B}$-map into $\cod(f')$ coherent with respect to pulling back to $\dom(f')$.
\end{proof}

\begin{lemma} \label{fibrations-extend-squares}
Fibrations extend along squares $\theta_k \hatotimes m \co m \to \delta_k \hatotimes m$ for $k \in \braces{0, 1}$ and $m \in \C$.
\end{lemma}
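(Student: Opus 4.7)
The plan is to apply the equivalence extension property (\cref{glueing-core}) after unpacking the Leibniz construction. A direct computation shows that the square $\theta_k \hatotimes m \co m \to \delta_k \hatotimes m$ is
\[
\xymatrix{
  A \ar[r] \ar[d]_{m} & B +_A (I \otimes A) \ar[d]^{\delta_k \hatotimes m} \\
  B \ar[r]_-{\delta_{1-k} \otimes B} & I \otimes B
\rlap{,}}
\]
whose top arrow factors as $A \xrightarrow{\delta_{1-k} \otimes A} I \otimes A \to B +_A (I \otimes A)$, crucially going through the \emph{opposite} endpoint $\delta_{1-k}$ rather than $\delta_k$. For a fibration $p \co Y \to B +_A (I \otimes A)$, the extension problem therefore asks for a fibration $Y' \to B$ whose pullback along $m$ equals the pullback of $p$ along this top arrow.

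Given such $p$, I first use the van Kampen property of the pushout (available since $m$ and $\delta_k \otimes A$ are cofibrations and cofibrations are adhesive by condition~(ii) of \cref{suitable-wfs}) to decompose it: setting $X \defeq Y|_B \to B$ and $E \defeq Y|_{I \otimes A} \to I \otimes A$, both are fibrations by pullback stability, and the pushout structure yields $m^* X \iso \delta_k^* E$ over $A$. The pullback of $p$ along the top arrow is then $\delta_{1-k}^* E$.

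By \cref{path-to-homotopy-equivalence} applied to $E \to I \otimes A$, the two endpoint fibers $\delta_{1-k}^* E$ and $\delta_k^* E$ are homotopy equivalent over $A$; by \cref{equivalence-variations}, this is an equivalence in the technical sense. I then invoke \cref{glueing-core} with the cofibration $m \co A \to B$, taking $X_0 \defeq \delta_{1-k}^* E$, $X_1 \defeq \delta_k^* E$ connected by the equivalence above, and $Y_1 \defeq X$ (which pulls back along $m$ to $X_1$). The output is a fibration $Y_0 \to B$ with $m^* Y_0 = \delta_{1-k}^* E$, which is exactly the required extension.

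The main conceptual step is the initial unpacking of $\theta_k \hatotimes m$, specifically observing that its top arrow goes through the \emph{opposite} endpoint $\delta_{1-k}$: this turns what might naively look like a trivial restatement (had the top arrow gone through $\delta_k$ or $m$, extension would be immediate from $Y|_B$) into a genuine extension problem bridged precisely by the homotopy equivalence between the two endpoint fibers of the cylinder-valued fibration $E \to I \otimes A$; after this observation, the application of \cref{glueing-core} is essentially forced.
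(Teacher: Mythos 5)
Your proof is correct and follows essentially the same route as the paper: unpack $\theta_k \hatotimes m$, decompose the given fibration over the pushout $B +_A I \otimes A$ into its restrictions to $B$ and $I \otimes A$, obtain an equivalence between the endpoint fibers via \cref{path-to-homotopy-equivalence}, and feed this into \cref{glueing-core}. The paper states this more tersely (fixing $k=1$ and citing duality for $k=0$), but the content and the key observation about the top arrow passing through the opposite endpoint are the same.
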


\begin{proof}
We only deal with the case $k = 1$, the case $k = 0$ is dual (observe that the notion of homotopy equivalence is symmetric).
Let $m \co A \to B$ be a cofibration.
We recall the square $\theta_1 \hatotimes m$:
\[
\xymatrix@C+2em{
  A
  \ar[r]^-{\iota_1 \cc (\delta_0 \otimes A)}
  \ar[d]_{m}
&
  B +_A I \otimes A
  \ar[d]^{\delta_1 \hatotimes m}
\\
  B
  \ar[r]_-{\delta_0 \otimes B}
&
  I \otimes B
\rlap{.}}
\]
Given a fibrant object over the top right corner, pulling back to the components of the coproduct and using \cref{path-to-homotopy-equivalence}, we obtain precisely the input data for \cref{glueing-core} as in the diagram shown there.
The resulting fibration $Y_0 \to B$ is the needed extension.
\end{proof}

Combined with the discussion preceding \cref{composition-is-filling} amd \cref{extension-closed-under-biased-retract}, we finally obtain the following.

\begin{corollary} \label{fibrations-extension-gen-triv-cof}
Fibrations extend along the generating trivial cofibrations $\cal{J}$ from~\eqref{gen-triv-cof}.
\qed
\end{corollary}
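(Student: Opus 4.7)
The plan is to reduce the extension problem for an arbitrary generating trivial cofibration $\delta_k \hatotimes m \in \cal{J}$ (with $m \in \cal{I}$ and $k \in \braces{0,1}$) to the extension along a square of the form $\theta_k \hatotimes m'$ already covered by \cref{fibrations-extend-squares}, via the biased retract machinery of \cref{extension-closed-under-biased-retract}.

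First, I would note that $m' \defeq \delta_k \hatotimes m$ is itself a cofibration: since $m \in \cal{I} \subseteq \C$ and the endpoint inclusion $\delta_k$ is valued in cofibrations, \cref{boundary-inclusion} (or more directly \cref{cofib-leibniz} combined with condition~(vi) of \cref{suitable-wfs}) gives $\delta_k \hatotimes m \in \C$. Hence \cref{fibrations-extend-squares} applies to the square $\theta_k \hatotimes m' = \theta_k \hatotimes \delta_k \hatotimes m$, and fibrations extend along it.

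Next I would invoke the observation recorded in the paragraph preceding \cref{composition-is-filling}: the presence of connections in the suitable functorial cylinder makes $\theta_k \hatotimes \delta_k \hatotimes m$ a section in the arrow category (by \cite[Lemma~3.4]{gambino-sattler:frobenius}), so the arrow $\delta_k \hatotimes m$, viewed as the identity square on itself, is a biased retract of the square $\theta_k \hatotimes \delta_k \hatotimes m$. Applying \cref{extension-closed-under-biased-retract} to the class $\cal{B} \defeq \F$ transports the extension property from $\theta_k \hatotimes \delta_k \hatotimes m$ to $\delta_k \hatotimes m$.

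Since the choice of $m \in \cal{I}$ and $k \in \braces{0,1}$ was arbitrary, this shows that fibrations extend along every element of $\cal{J}$, completing the proof. The whole argument is essentially bookkeeping once the two key ingredients (extension along the squares $\theta_k \hatotimes m$ and the biased retract of $\delta_k \hatotimes m$ through them) are available; there is no real obstacle to overcome, which is why the statement is marked as a corollary with a \texttt{\textbackslash qed} in place of an explicit proof.
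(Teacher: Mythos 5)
Your argument matches the paper's exactly: the corollary is obtained from \cref{fibrations-extend-squares} applied to $\theta_k \hatotimes \delta_k \hatotimes m$ (using that $\delta_k \hatotimes m \in \C$), together with the observation from the paragraph before \cref{composition-is-filling} that connections make $\theta_k \hatotimes \delta_k \hatotimes m$ a section and hence $\delta_k \hatotimes m$ a biased retract of it, and finally \cref{extension-closed-under-biased-retract}. The only minor slip is the parenthetical citation of \cref{boundary-inclusion}, which concerns $i^1$ rather than the individual $\delta_k$; the precise reference is \cref{pushout-application} (or \cref{cofib-leibniz} with condition~(vi), as you also note).
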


It remains to go from extension of fibrations along generating trivial cofibrations to extension of fibrations along arbitrary trivial cofibrations.
From Quillen's small object argument~\cite{quillen-homotopical}, we have an explicit description of the latter in terms of the former: any trivial cofibration is a retract of a transfinite composition of cobase changes of coproducts of generating trivial cofibrations (we include coproducts here to avoid a use of the axiom of choice).

\begin{lemma} \label{extension-closure}
The class of cofibrations that fibrations extend along is closed under:
\begin{enumerate}[label=(\roman*)]
\item coproducts,
\item cobase change,
\item transfinite compositions,
\item retracts.
\end{enumerate}
\end{lemma}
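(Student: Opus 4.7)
The plan is to dispatch case (iv) by an elementary diagram chase, and to treat cases (i)--(iii) uniformly by exploiting the three van Kampen properties built into \cref{suitable-wfs} --- extensivity of coproducts, adhesiveness of cofibrations, and exhaustiveness of their transfinite compositions --- together with locality of fibrations (\ref{local}).

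For (iv), suppose $m' \co A' \to B'$ is a retract of $m \co A \to B$ in $\cal{E}^\to$, presented by maps $i \co A' \to A$, $r \co A \to A'$, $i' \co B' \to B$, $r' \co B \to B'$ with $r i = \id$ and $r' i' = \id$. Given a fibration $X' \to A'$, pull back along $r$ to obtain a fibration $X \to A$, extend along $m$ to obtain a fibration $Y \to B$ with $Y \times_B A \cong X$, and pull back along $i'$ to obtain a fibration $Y' \to B'$. Since the retract commutativity gives $i' m' = m i$, the identity $A' \to B \,=\, A' \xrightarrow{i} A \to B$ lets me chain pullbacks: $Y' \times_{B'} A' \cong Y \times_B A' \cong X \times_A A' \cong X'$, where the last step uses $r i = \id$.

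For (i)--(iii), I would apply the following uniform scheme. Given a fibration $X \to A$ with $A$ the relevant colimit, use the appropriate van Kampen property to express $X$ as the colimit of a cartesian diagram of fibrations $X_k \to A_k$, each a pullback of $X$. Apply the assumed extension to each $X_k \to A_k$ to get fibrations $Y_k \to B_k$ extending $X_k$, assemble these into a cartesian diagram in $\cal{E}^\to$ (requiring some care, see below), and form the colimit $Y \to B$ in $\cal{E}^\to$. Van Kampen-ness makes the colimiting cocone cartesian, so \ref{local} promotes $Y \to B$ to a fibration, and the same van Kampen property identifies $Y \times_B A$ with $X$. In (i) the assembly is immediate from extensivity (condition (iv) of \cref{suitable-wfs}, which entails that each component $B_i \to \coprod B_j$ fits into a cartesian square with the identity on the $i$-th component $Y_i$). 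In (ii) the assembly is the adhesive pushout $Y \cup_X X'$ over $B \cup_A A'$ (cf.\ adhesiveness, which guarantees this pushout is van Kampen when $A \to B$ is a cofibration).

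The main obstacle is case (iii), where the coherent family $Y_\alpha \to B_\alpha$ has to be built by transfinite induction along the composable chain of cofibrations. At each successor ordinal I invoke the hypothesis to extend $Y_\alpha \to B_\alpha$ along $B_\alpha \to B_{\alpha+1}$; the pullback square granted by \cref{extension} keeps the growing diagram cartesian. At each limit ordinal I take the van Kampen colimit permitted by exhaustiveness, which both preserves cartesianness and, by applying \ref{local} to the subdiagram built so far, yields a fibration; this lets the induction continue past the limit. Once the full family has been assembled, one final application of \ref{local} to the whole diagram finishes the proof of (iii) and hence of the lemma.
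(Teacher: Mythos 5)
Your proof is correct and follows essentially the same route as the paper: parts (i)--(iii) are handled by the identical scheme of pulling back, extending componentwise, assembling a cartesian diagram, and invoking the relevant van Kampen property (extensivity, adhesivity, exhaustiveness) together with locality~\ref{local} to recognize the colimit as a fibration extending the given one. The only cosmetic difference is in (iv), where you perform the pullback-chasing explicitly rather than citing \cref{extension-closed-under-biased-retract} (of which your argument is just the unwinding in the special case of ordinary retracts).
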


\begin{proof}
The last part was already proven in more general form in \cref{extension-closed-under-biased-retract}.

The remaining claims make use of the following pattern.
Given a van Kampen colimit in $\cal{E}$, we may extend any cartesian diagram $F$ in $\cal{E}^\to$ whose codomain part is the given colimit diagram minus the tip towards the tip by taking the colimit in $\cal{E}^\to$, yielding a cartesian colimit diagram.
Given fibration structures on all objects of $F$, locality of fibrations as assumed in~\ref{local} implies that $\colim F$ is again a fibration.

For part~(i), given a fibration into the domain of a coproduct of arrows, we pull it back to the domains of its components.
We then extend and take the colimit.
Note that the domain and codomain parts of the coproduct are van Kampen by extensivity as assumed with the suitable category $\cal{E}$.
It follows that the resulting fibration pulls back to the starting fibration.

For part~(ii), van Kampenness of the relevant pushouts follows from adhesivity in condition~(ii) of \cref{suitable-wfs}.
Given a fibration into the domain of the cobase change, we pull it back, extend it, and then take the colimit of arrows.

For part~(iii), van Kampenness of the transfinite composition and its initial segments follows from exhaustiveness in condition~(ii) of \cref{suitable-wfs}.
Given a fibration into the start of the chain of cofibrations, we recursively extend it, using extension for the successor case and van Kampen plus locality in the limit case.
\end{proof}

\begin{remark} \label{classifier}
\newcommand{\M}{\cal{B}}
\newcommand{\CC}{\mathbb{C}}
\newcommand{\Up}{\widetilde{U}}

In the presence of a classifier $U$ for a class of maps $\M$, closure of the class of cofibrations that $\M$ extends along under weak saturation as in \cref{extension-closure} may be inferred more directly.
As detailed in~\cite{cisinski-univalence}, extension of $\M$ along a cofibration $m$ is equivalent to lifting of $m$ against $U \to 1$ provided we have a map $\pi \co \Up \to U$ with the following properties:
\begin{enumerate}[label=(\roman*)]
\item
the elements of $\M$ are those maps arising as a pullback of $\pi \co \Up \to U$,
\item
given pullback squares $(u, v) \co p \to p'$ and $(x, y) \co p \to \pi$ with $p' \in \M$ (and hence $p \in \M)$ and $v$ a cofibration, there is a map $y' \co Y' \to U$ such that $y' v = y$.
\end{enumerate}
It remains to note that both cofibrations and the class of maps lifting against $U \to 1$ are closed under weak saturation.

If $\cal{E}$ is presheaves over a small category $\CC$, the subclass of locally $\kappa$-small fibrations (\ie whose fibers have cardinality below $\kappa$) admits such a classifier $U_{<\kappa}$ for any cardinal $\kappa$ greater than the cardinality of arrows of $\CC$ as shown in~\cite{cisinski-univalence} and using different methods in~\cite{shulman:univalence-simplicial}.
However, although not detailed in~\cite{cisinski-univalence}, if working constructively, the proof of property~(ii) needs the additional assumption that cofibrations are valued in natural transformations with decidable monomorphisms as components.
Assuming arbitrarily large regular cardinals, this may be used to derive an easier proof of \cref{extension-closure}, corresponding to fibrancy of $U_{<\kappa}$, and subsequently of \cref{fibrations-extension-triv-cof} since the construction of \cref{glueing-core} and the derived \cref{fibrations-extension-gen-triv-cof} preserve locally $\kappa$-small fibrations.

One reason for our choice of proof even in the presheaf setting, working explictly with van Kampen colimits, is the elimination of the need for arbitrarily large cardinals and also (when working constructively) decidability of the components of cofibrations in case of the approach of~\cite{cisinski-univalence} (this is avoided in the approach of~\cite{shulman:univalence-simplicial}).
Note that in a predicative setting, this need for restricting to such cofibrations may still be necessary for ensuring the wfs $(\C, \TF)$ is cofibrantly generated.
\end{remark}

\begin{corollary} \label{fibrations-extension-triv-cof}
Fibrations extend along trivial cofibrations.
\end{corollary}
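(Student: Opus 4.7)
The plan is to combine the extension result for the generating trivial cofibrations with the closure properties of the class of cofibrations along which fibrations extend. Recall that by \cref{fibrations-extension-gen-triv-cof}, fibrations extend along every element of $\cal{J}$. By Quillen's small object argument (applied to the cofibrantly generated wfs $(\TC, \F)$), every trivial cofibration is a retract of a transfinite composition of cobase changes of coproducts of elements of $\cal{J}$; we use coproducts rather than iterated cobase changes of single generators precisely so as to avoid a choice principle when selecting lifting problems to factor.

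Next, I would invoke \cref{extension-closure}, which tells us that the class $\cal{S}$ of cofibrations along which fibrations extend is closed under coproducts, cobase change, transfinite composition, and retracts. Since $\cal{J} \subseteq \cal{S}$ by \cref{fibrations-extension-gen-triv-cof} and $\cal{S}$ is closed under the operations of weak saturation, every map in the weak saturation of $\cal{J}$ lies in $\cal{S}$. Combining this with the small-object-argument description above, every trivial cofibration lies in $\cal{S}$, which is the conclusion sought.

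There is no real obstacle here: the proof is essentially a bookkeeping combination of \cref{fibrations-extension-gen-triv-cof} and \cref{extension-closure}, mediated by the standard structural description of trivial cofibrations as retracts of transfinite compositions of pushouts of coproducts of generators. The only subtlety worth flagging, for the constructively-minded reader, is that we insist on coproducts of generators (rather than a single generator at each stage) in the factorization produced by the small object argument, which keeps the argument free of the axiom of choice in the same way as the rest of the development.
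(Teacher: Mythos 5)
Your proof is correct and matches the paper's approach exactly: the paper also combines \cref{fibrations-extension-gen-triv-cof} with \cref{extension-closure} via the small-object-argument description of trivial cofibrations as retracts of transfinite compositions of cobase changes of coproducts of generators, including the same observation about coproducts and avoidance of choice.
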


\begin{proof}
Combine \cref{fibrations-extension-gen-triv-cof} with \cref{extension-closure}.
\end{proof}

\section{The model structure}
\label{section:result}

We finally proved everything needed to apply the sufficient criterion for a right proper model structure developed in \cref{section:criterion}.
To make the statement more self-contained, we include a reminder of our suitable setting.

\begin{theorem}[Main theorem] \label{main-theorem}
Let $\cal{E}$ be a suitable category as per \cref{subsection:category}: it is locally presentable, locally cartesian closed, and infinitary extensive.

Let $I \otimes (-)$ be a suitable functorial cylinder as per \cref{subsection:cylinder}: it is left adjoint and has contractions, disjoint endpoint inclusions $\delta_0, \delta_1 \co \Id \to I$, and connections.

Let $(\C, \TF)$ be a suitable wfs as per \cref{subsection:wfs}: it is cofibrantly generated and its left maps are adhesive; exhaustive; closed under pullback, finitary union, application of $I$; and contain the components of the endpoint inclusions of $I$.

Define
\[
\F \defeq \liftr{(\braces{\delta_0, \delta_1} \hatotimes \C)} = \liftr{\set{\delta_k \hatotimes m}{k \in \braces{0, 1}, m \in \C}}
\]
and $\TC \defeq \liftl{\F}$ as per \cref{subsection:fibrations}.
Assume that $\F$ is closed under $[\delta_0, \delta_1] \hatobackslash (-)$ \ref{boundary-fibration} and local \ref{local}.
Then the category $\cal{E}$ forms a right proper combinatorial model category with wfs's $(\C, \TF)$ and $(\TC, \F)$.
\end{theorem}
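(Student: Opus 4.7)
The plan is to invoke \cref{right-proper-model-structure} by verifying its four hypotheses in the setting at hand, and then separately to read off combinatoriality from how the two weak factorization systems were constructed.

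First, I would confirm the inclusion $\TC \subseteq \C$ (equivalently $\TF \subseteq \F$) needed to even apply the criterion: the generators $\cal{J}$ consist of Leibniz applications $\delta_k \hatotimes m$ with $m \in \cal{I} \subseteq \C$, and since each endpoint inclusion is valued in cofibrations by condition~(vi) of \cref{suitable-wfs}, each such Leibniz application already lies in $\C$ by \cref{pushout-product}; saturation then yields the inclusion.

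With that in hand, the four hypotheses of \cref{right-proper-model-structure} are supplied directly by results already established in the paper: the span property is \cref{span-property-holds}; 2-out-of-3 for trivial fibrations within the subcategory of fibrations is \cref{trivial-fibration-2-out-of-3}; trivial fibrations extend along all cofibrations, hence in particular along trivial cofibrations, by \cref{triv-fib-extend-along-cof}, while fibrations extend along trivial cofibrations by \cref{fibrations-extension-triv-cof}; finally, the Frobenius property for $(\TC, \F)$ is \cref{frobenius}. Invoking \cref{right-proper-model-structure} then delivers a right proper model structure with the stated two weak factorization systems.

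For combinatoriality, I would note that $\cal{E}$ is locally presentable by hypothesis, that $(\C, \TF)$ is cofibrantly generated by condition~(i) of \cref{suitable-wfs}, and that $(\TC, \F)$ is cofibrantly generated by the set $\cal{J}$ of~\eqref{gen-triv-cof} by its very construction via the small object argument in a locally presentable category. The substantive difficulty has already been discharged upstream: the hard part is condition~(iii), the extension of fibrations along trivial cofibrations, which funnels through the equivalence extension property \cref{glueing-core} together with the weak saturation argument of \cref{extension-closure} (relying on locality~\ref{local} and the van Kampen hypotheses baked into the notion of a suitable category). Granting those inputs, the main theorem itself amounts to assembling the pieces.
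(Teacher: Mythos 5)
Your proposal is correct and takes essentially the same approach as the paper: it applies \cref{right-proper-model-structure}, supplying its four hypotheses via \cref{span-property-holds}, \cref{trivial-fibration-2-out-of-3}, \cref{triv-fib-extend-along-cof} together with \cref{fibrations-extension-triv-cof}, and \cref{frobenius}, and reads combinatoriality off the two cofibrantly generated wfs's on a locally presentable category. The extra verification of $\TC \subseteq \C$ is a sensible addition left implicit in the paper (though the cleanest reference for $\delta_k \hatotimes m \in \C$ is \cref{pushout-application} rather than \cref{pushout-product}).
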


\begin{proof}
We apply \cref{right-proper-model-structure}:
\begin{itemize}
\item condition~(i) holds by \cref{span-property-holds},
\item condition~(ii) holds by \cref{trivial-fibration-2-out-of-3},
\item condition~(iii) holds by \cref{fibrations-extension-triv-cof,triv-fib-extend-along-cof},
\item condition~(iv) holds by \cref{frobenius}.
\qedhere
\end{itemize}
The resulting model structure is combinatorial since $\cal{E}$ is locally presentable and cofibrantly generated by condition~(i) of \cref{suitable-wfs} and the setup in \cref{subsection:fibrations}.
\end{proof}

In choosing our setting, recall that we required the exactness conditions of adhesivity and exhaustiveness only for cofibrations (see condition~(ii) of \cref{suitable-wfs}).
This is to include situations in which not all monomorphisms are \eg adhesive, but where the cofibrations can still be chosen as a proper subclass of all monomorphisms.

Let us now simplify our the setting.
We specialize to the case where the cofibrations are precisely the monomorphisms.
For convenience, we also assume that $\cal{E}$ is an exact category in the sense of Barr (note that $\cal{E}$ is already regular as it is locally cartesian closed and has coequalizers).
By a variant of Giraud's Theorem \cite[Theorem~C2.2.8, condition~(vii)]{johnstone:elephant}, our assumptions on $\cal{E}$ are then equivalent to $\cal{E}$ being a Grothendieck topos.

Since $\cal{E}$ is a Grothendieck topos, the class $\cal{M}$ of monomorphisms is weakly saturated, \ie $(\cal{M}, \liftr{\cal{M}})$ is a wfs.
Following Cisinski~\cite{cisinski-asterisque}, a \emph{cellular model} of $\cal{E}$ is a small set of maps $I$ such that $\cal{M} = \liftl{(\liftr{I})}$, \ie a witness that $(\cal{M}, \liftr{\cal{M}})$ is cofibrantly generated.
Recall that a Cisinski model structure is a cofibrantly generated model structure on a Grothendieck topos with cofibrations the monomorphisms.

Simplifying also the functorial cylinder, we assume that it comes from tensoring with an interval object with respect to a closed symmetric monoidal structure on $\cal{E}$ as explained in the background section of~\cite{gambino-sattler:frobenius}.
The structure and axioms for a suitable interval object mirror those of a suitable functorial cylinder.
As per \cref{swap-arguments}, this ensures assumption \ref{boundary-fibration} holds.

\begin{corollary} \label{main-corollary}
Let $(\cal{E}, \top, \otimes)$ be a closed symmetric monoidal Grothendieck topos admitting a cellular model.
Let $I$ be an interval object in $(\cal{E}, \top, \otimes)$ with contraction, disjoint endpoint inclusions $\delta_0, \delta_1 \co \top \to I$, and connections such that $I \otimes (-)$ preserves $\cal{M}$.
Define $\F \defeq \liftr{(\braces{\delta_0, \delta_1} \hatotimes \cal{M})}$.
If $\F$ is local \ref{local}, then $\cal{E}$ forms a proper Cisinski model category with fibrations $\F$.

The locality condition~\ref{local} holds in particular if $\cal{E}$ has a cellular model with tiny codomains and $I \otimes (-)$ preserves tiny objects.
\end{corollary}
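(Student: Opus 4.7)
The plan is to apply \cref{main-theorem} after checking that each hypothesis holds in the present setting, then to handle full properness and the final sufficient locality condition separately.

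The suitable category hypothesis is immediate: any Grothendieck topos is locally presentable, locally cartesian closed, and infinitary extensive. Tensoring with $I$ yields a suitable functorial cylinder: the right adjoint is supplied by closedness, and the remaining structure (contraction, disjoint endpoints, connections, and the coherence axioms) is inherited from the interval. The wfs $(\cal{M}, \liftr{\cal{M}})$ is suitable: cofibrant generation is the existence of a cellular model; monomorphisms in a Grothendieck topos are adhesive, exhaustive, and closed under pullback and finitary union; preservation of $\cal{M}$ by $I \otimes (-)$ is assumed; endpoints are split monomorphisms via the contraction. For \ref{boundary-fibration}, apply \cref{swap-arguments}: the symmetry of the monoidal product furnishes $I \otimes I \cong I \otimes I$ coherent with the interval data, giving $i^1 \hatotimes \delta_k \cong \delta_k \hatotimes i^1$, the sufficient closure condition. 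Locality \ref{local} is assumed. Thus \cref{main-theorem} produces a right proper combinatorial model structure with cofibrations $\cal{M}$ and fibrations $\F$, which is Cisinski by construction.

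To upgrade right to full properness, observe that every object is cofibrant since the initial map $0 \to X$ is always a monomorphism in a topos; a standard application of Brown's lemma to the pushout functor then yields left properness for model categories in which every object is cofibrant. For the final sufficient condition, if the cellular model $\cal{I}$ has tiny codomains and $I \otimes (-)$ preserves tiny objects, then each generating trivial cofibration $\delta_k \hatotimes m$ (with $m \co A \to B$ in $\cal{I}$) has tiny codomain $I \otimes B$, so locality of $\F$ follows from \cref{tiny-codomains}.

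The proposal has no deep obstacle; essentially it is a checklist reduction to \cref{main-theorem} supplemented by two standard facts. The most delicate step is the verification of \ref{boundary-fibration} through \cref{swap-arguments}, which requires that the braiding of $\otimes$ interact coherently with the contraction, endpoints, and connections of $I$ — precisely what the axioms for a suitable interval object in a closed symmetric monoidal category demand.
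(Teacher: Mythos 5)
Your proof is correct and takes the same route as the paper: reduce to \cref{main-theorem} with $\C = \cal{M}$ and cylinder $I \otimes (-)$, invoke the symmetry-based \cref{swap-arguments} for assumption~\ref{boundary-fibration}, and derive locality from \cref{tiny-codomains} in the second part. A worthwhile addition on your side is the explicit left-properness step via cofibrancy of all objects, which the paper's terse proof leaves implicit despite the statement asserting a \emph{proper} (not merely right proper) model category.
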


\begin{proof}
For the first part, we instantiate \cref{main-theorem} with $\C \defeq \cal{M}$ and the suitable functorial cylinder given by tensoring with $I$.
Note that $\delta_0$ and $\delta_1$ are valued in monomorphisms since contractions provide a retraction.

For the second part, the assumptions combine to imply that the codomains of the generators $\cal{J}$ in~\eqref{gen-triv-cof} of $\TC$ are tiny.
Hence, the class of fibrations is local by \cref{tiny-codomains}.
\end{proof}

\begin{remark} \label{main-presheaves}
Let $\cal{E}$ be a presheaf category.
Then $\cal{E}$ admits a cellular model by \cite[Prop.~1.2.27]{cisinski-asterisque}.
The tiny objects are precisely the retracts of representables.
The second part of \cref{main-corollary} can then be stated slightly weaker by requiring $\cal{E}$ having a cellular model with representable codomains and $I \otimes (-)$ preserving representables.
This has a neat reformulation as $I \obackslash (-)$ admitting a further right adjoint $R$:
\[
\newcommand{\adjoint}{\quad\dashv\quad}
I \otimes (-) \adjoint I \obackslash (-) \adjoint R
.\]
This gives a more synthetic way of handling the issue of locality of fibrations, applicable to settings that are not locally presentable, or even cocomplete.
\end{remark}

For the next level of instantiation, recall the notion of an elegant Reedy category from~\cite{bergner-rezk-elegant}.

\begin{corollary} \label{example-elegant-reedy-presheaves}
Let $(\cal{E}, \top, \otimes)$ be a closed symmetric monoidal presheaf category on an elegant Reedy category $\mathbb{C}$.
Let $I$ be an interval object in $(\cal{E}, \top, \otimes)$ with contraction, disjoint endpoint inclusions $\delta_0, \delta_1 \co \top \to I$, and connections such that $I \otimes (-)$ preserves representables and $\cal{M}$.
Then $\cal{E}$ forms a proper Cisinski model category with fibrations $\F \defeq \liftr{(\braces{\delta_0, \delta_1} \hatotimes \cal{M})}$.
\end{corollary}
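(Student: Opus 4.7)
The plan is to reduce the statement to \cref{main-corollary} by verifying its hypotheses, with the only nontrivial point being the locality assumption \ref{local}. A presheaf category on any small category is a Grothendieck topos, so $\cal{E}$ is automatically a closed symmetric monoidal Grothendieck topos. By \cite[Prop.~1.2.27]{cisinski-asterisque} it admits a cellular model, so the first sentence of \cref{main-corollary}'s hypothesis is met. The interval structure is assumed directly, and preservation of $\cal{M}$ by $I \otimes (-)$ is part of the hypothesis. It remains to exhibit the locality condition \ref{local}.

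For this I invoke the second part of \cref{main-corollary} together with the refinement in \cref{main-presheaves}: in the presheaf setting, locality is guaranteed as soon as a cellular model with \emph{representable} codomains is available and $I \otimes (-)$ preserves representables. The second preservation statement is part of the hypothesis. For the first, I appeal to elegance of $\mathbb{C}$: on an elegant Reedy category, the boundary inclusions $\partial c \to c$ at the representables $c \in \mathbb{C}$ form a cellular model of the class of monomorphisms in $\cal{E}$. This is essentially the content of the Reedy skeletal filtration combined with the elegance property that degenerate elements possess canonical expressions, which lets one decompose any monomorphism into a transfinite composition of pushouts of coproducts of the boundary inclusions (the same fact referenced in the introduction for the Kan case). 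Hence such a cellular model exists and has representable codomains.

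With a cellular model of representable codomains in hand and $I \otimes (-)$ preserving representables, each generating trivial cofibration in $\cal{J} = \{\delta_0, \delta_1\} \hatotimes \C$ (see~\eqref{gen-triv-cof}) has a codomain of the form $I \otimes c$ for a representable $c$, hence representable, hence tiny. By \cref{tiny-codomains} the class $\F$ is then local, so~\ref{local} holds, and \cref{main-corollary} applies to yield the desired proper Cisinski model structure with fibrations $\F$.

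The only genuinely substantive step is the existence of a cellular model with representable codomains; the remaining verifications are a matter of unwinding definitions. In particular, once elegance is invoked to decompose monomorphisms in terms of the boundary inclusions $\partial c \to c$, the rest of the argument is a straightforward instantiation of \cref{main-corollary} and \cref{main-presheaves}.
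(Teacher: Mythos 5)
Your proposal is correct and follows essentially the same route as the paper's proof: reduce to \cref{main-corollary}, use elegance of $\mathbb{C}$ to supply a cellular model with representable codomains (the latching object, \ie boundary, inclusions of representables), and then invoke \cref{main-presheaves} together with preservation of representables by $I \otimes (-)$ to obtain locality~\ref{local} via \cref{tiny-codomains}. The only difference is that you unwind the intermediate steps of \cref{main-presheaves} explicitly, whereas the paper cites it as a black box.
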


\begin{proof}
We apply \cref{main-corollary}.
Presheaf categories are particular Grothendieck toposes.
Since the Reedy category $\mathbb{C}$ is elegant, the monomorphisms $\cal{M}$ are the weak saturation of the small set $\cal{I}$ of latching object inclusions of representables.
With $I \otimes (-)$ preserving representables, this takes care of locality~\ref{local} by \cref{main-presheaves}.
\end{proof}

In simplical sets, the functorial cylinder $\Delta^1 \times (-)$ does not preserve representables, forbidding a direct application of \cref{example-elegant-reedy-presheaves}.
However, we can still salvage the situation.

\begin{corollary}[Kan model structure] \label{kan-model-structure}
Simplicial sets form a proper Cisinski model structure with fibrations the usual Kan fibrations.
\end{corollary}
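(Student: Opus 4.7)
The plan is to apply \cref{main-theorem} to $\cal{E} = \SSet$ with cofibrations $\C = \cal{M}$ the class of all monomorphisms and functorial cylinder $I \otimes (-) = \Delta^1 \times (-)$. Simplicial sets form a Grothendieck topos and are therefore suitable in the sense of \cref{subsection:category}. The wfs $(\cal{M}, \liftr{\cal{M}})$ is cofibrantly generated by the boundary inclusions $\partial\Delta^n \hookrightarrow \Delta^n$, and every monomorphism can be written as an $\omega$-composition of cobase changes of coproducts of boundary inclusions since $\Delta$ is elegant Reedy --- as flagged in the introduction, this is where excluded middle intervenes. Adhesivity and exhaustiveness hold for all monomorphisms in a topos, and the remaining conditions of \cref{suitable-wfs} are routine. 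The interval $\Delta^1$ carries the evident contraction $\Delta^1 \to \Delta^0$, disjoint endpoint inclusions $\delta_0, \delta_1 \co \Delta^0 \to \Delta^1$, and $\min/\max$ connections; the cylinder $\Delta^1 \times (-)$ preserves monomorphisms. By symmetry of the cartesian monoidal structure together with \cref{swap-arguments}, assumption \ref{boundary-fibration} holds.

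The main obstacle is the locality assumption \ref{local}, which cannot be derived through the tiny-codomain route of \cref{example-elegant-reedy-presheaves} because $\Delta^1 \times \Delta^n$ is not a retract of a representable. My approach is to identify the abstract class $\F = \liftr{(\braces{\delta_0, \delta_1} \hatotimes \cal{M})}$ with the classical class of Kan fibrations, i.e.\ maps with the right lifting property against all horn inclusions $\Lambda^n_k \hookrightarrow \Delta^n$. One direction uses the standard prism decomposition of $\Delta^1 \times \Delta^n$ into $(n+1)$-simplices, which exhibits $\delta_k \hatotimes (\partial\Delta^n \hookrightarrow \Delta^n)$ as a finite composition of cobase changes of horn inclusions; hence every Kan fibration lies in $\F$. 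For the reverse, one verifies by the classical combinatorial reductions (cf.\ Gabriel--Zisman) that each horn inclusion belongs to the weak saturation of $\braces{\delta_k \hatotimes \partial\Delta^n}_{k, n}$, so that any $p \in \F$ is a Kan fibration. I expect this identification --- specifically the direction producing horn fillers from $\delta_k \hatotimes m$-fillers --- to be the most delicate combinatorial step.

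Once $\F$ has been identified with the Kan fibrations, locality \ref{local} becomes routine. Colimits in the topos $\SSet$ are universal, so given any cartesian cocone $F \to p$ in $\cal{E}^\to$ with every member of $F$ a Kan fibration, an arbitrary lifting problem $\Lambda^n_k \to X$, $\Delta^n \to Y$ against $p$ factors through some representable piece of the cocone --- because its codomain $\Delta^n$ is representable, the classifying map $\Delta^n \to Y = \colim Y_i$ factors through some $Y_i$ --- where the pulled-back fibration supplies the required filler; assembly back to $p$ uses the universal property of the cartesian colimit. With every hypothesis of \cref{main-theorem} verified, we obtain a right proper combinatorial model structure on $\SSet$ whose cofibrations are the monomorphisms and whose fibrations are the Kan fibrations, i.e.\ the Kan model structure.
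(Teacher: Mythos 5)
Your proposal takes essentially the same route as the paper: both sidestep the failure of the tiny-codomain criterion for $\cal{J}$ by identifying the abstractly defined class $\F$ with the classical Kan fibrations via Gabriel--Zisman-style prism combinatorics, and then deriving locality from the fact that Kan fibrations are characterized by lifting against horn inclusions, which have representable codomains. The paper routes through \cref{main-corollary} rather than invoking \cref{main-theorem} directly, but that is an inessential packaging difference, and your explicit unwinding of the locality argument matches what \cref{tiny-codomains} does implicitly.
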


\begin{proof}
We apply \cref{main-corollary} as in \cref{example-elegant-reedy-presheaves} (recall that the simplex category $\Delta$ is an elegant Reedy category), except that we argue separately for locality~\ref{local} of fibrations.
The monoidal structure is the cartesian one and the interval object is $\Delta^1$.
Taking the boundary inclusions $\partial \Delta^n \to \Delta^n$ for $n \geq 0$ as generating cofibrations, the generating trivial cofibrations $\cal{J}$ defined in~\eqref{gen-triv-cof} have codomains of the form $\Delta^1 \times \Delta^n$, which for $n \geq 1$ are certainly not retracts of representables and hence not tiny.
However, as shown in~\cite[Chapter~IV, Section~2]{gabriel-zisman:calculus-of-fractions} using simple combinatorics, our class of fibrations coincides with that of the usual Kan fibrations, which are local since they are defined with respect to lifts against horn inclusions, with representables as codomains.
Again following \cref{tiny-codomains}, this shows that condition~\ref{local} holds nonetheless.
\end{proof}

\bibliographystyle{alpha}
\bibliography{}

\newcommand{\noopsort}[1]{}
\begin{thebibliography}{{\noopsort{Berg A}}vdBG12}

\bibitem[{\noopsort{Berg A}}vdBG12]{garner:topological-simplicial}
B.~{\noopsort{Berg A}}~van~den Berg and R.~Garner.
\newblock Topological and simplicial models of identity types.
\newblock {\em Transactions of the {ACM} on Computational Logic}, 13(1):3--44,
  2012.

\bibitem[BR13]{bergner-rezk-elegant}
J.~Bergner and C.~Rezk.
\newblock {R}eedy categories and the {$\Theta$}-construction.
\newblock {\em Mathematische Zeitschrift}, 274(1):499--514, 2013.

\bibitem[CCHM16]{cohen-et-al:cubicaltt}
C.~Cohen, T.~Coquand, S.~Huber, and A.~M\"ortberg.
\newblock Cubical type theory: a constructive interpretation of the univalence
  axiom.
\newblock arXiv:1611.02108. To be published in the post-proceedings of the 21st
  International Conference on Types for Proofs and Programs, TYPES 2015, 2016.

\bibitem[Cis06]{cisinski-asterisque}
D.-C. Cisinski.
\newblock Les pr\'efaisceaux comme mod\`eles des types d'homotopie.
\newblock {\em Ast{\'e}risque}, 308:xxiv+392, 2006.

\bibitem[Cis14]{cisinski-univalence}
D.-C. Cisinski.
\newblock Univalent universes for elegant models of homotopy types.
\newblock arXiv:1406.0058, 2014.

\bibitem[GG08]{gambino-garner:idtypewfs}
N.~Gambino and R.~Garner.
\newblock The identity type weak factorisation system.
\newblock {\em Theoretical Computer Science}, 409:94--109, 2008.

\bibitem[GL12]{garner-lack:adhesive}
R.~Garner and S.~Lack.
\newblock On the axioms for adhesive and quasiadhesive categories.
\newblock {\em Theory and Applications of Categories}, 27(3):27--46, 2012.

\bibitem[GM13]{methods-of-homological-algebra}
S.~I. Gelfand and Y.~I. Manin.
\newblock {\em Methods of homological algebra}.
\newblock Springer, 2013.

\bibitem[GS17]{gambino-sattler:frobenius}
N.~Gambino and C.~Sattler.
\newblock The {F}robenius condition, right properness, and uniform fibrations.
\newblock {\em Journal of Pure and Applied Algebra}, 221(12):3027--3068, 2017.

\bibitem[GZ67]{gabriel-zisman:calculus-of-fractions}
P.~Gabriel and M.~Zisman.
\newblock {\em Calculus of fractions and homotopy theory}, volume~35 of {\em
  Ergebnisse der Mathematik und ihrer Grenzgebiete}.
\newblock Springer, 1967.

\bibitem[Hov99]{hovey:model-categories}
M.~Hovey.
\newblock {\em Model categories}.
\newblock Number~63 in Mathematical Surveys and Monographs. American
  Mathematical Society, 1999.

\bibitem[Joh02]{johnstone:elephant}
Peter~T. Johnstone.
\newblock {\em Sketches of an elephant: a Topos theory compendium}.
\newblock Oxford logic guides. Oxford Univ. Press, 2002.

\bibitem[Joy08]{joyal-quaderns}
A.~Joyal.
\newblock The theory of quasi-categories and its applications.
\newblock Quaderns 45, Centre de Recerca Matem{\`a}tica, 2008.

\bibitem[JT99]{joyal-tierney-notes}
A.~Joyal and M.~Tierney.
\newblock An introduction to simplicial homotopy theory.
\newblock Available from
  \url{http://hopf.math.purdue.edu/Joyal-Tierney/JT-chap-01.pdf}, 1999.

\bibitem[JT08]{joyal-tierney:simplicial-homotopy-theory}
A.~Joyal and M.~Tierney.
\newblock Notes on simplicial homotopy theory.
\newblock Lecture notes, available at
  \url{http://mat.uab.cat/~kock/crm/hocat/advanced-course/Quadern47.pdf}, 2008.

\bibitem[KL16]{voevodsky-simplicial-model}
C.~Kapulkin and P.~LeFanu Lumsdaine.
\newblock The simplicial model of {U}nivalent {F}oundations (after
  {V}oevodsky).
\newblock arXiv:1211.2851v4, 2016.

\bibitem[Qui67]{quillen-homotopical}
D.~G. Quillen.
\newblock {\em Homotopical algebra}, volume~43 of {\em Lecture Notes in
  Mathematics}.
\newblock Springer, 1967.

\bibitem[Rie11]{riehl-algebraic-model}
E.~Riehl.
\newblock Algebraic model structures.
\newblock {\em New York Journal of Mathematics}, 17:173--231, 2011.

\bibitem[RV14]{riehl-verity:reedy}
E.~Riehl and D.~Verity.
\newblock The theory and practice of {R}eedy categories.
\newblock {\em Theory and Applications of Categories}, 29(9):256--301, 2014.

\bibitem[SH11]{heindel:van-kampen-universal}
P.~Sobocinski and T.~Heindel.
\newblock Being van {K}ampen is a universal property.
\newblock {\em Logical Methods in Computer Science}, 7, 2011.

\bibitem[Shu15a]{shulman:inverse-diagrams}
M.~Shulman.
\newblock Univalence for inverse diagrams and homotopy canonicity.
\newblock {\em Mathematical Structures in Computer Science}, 25:1203--1277,
  2015.

\bibitem[Shu15b]{shulman:univalence-simplicial}
Michael Shulman.
\newblock The univalence axiom for elegant reedy presheaves.
\newblock {\em Homology, Homotopy and Applications}, 17(2), 2015.

\bibitem[Swa15]{swan:ams}
A.~Swan.
\newblock Identity types in an algebraic model structure.
\newblock \url{http://sites.google.com/site/wakelinswan/idams.pdf}, 2015.

\end{thebibliography}

\end{document}